\numberwithin{equation}{section}
\newtheorem{theo}{Theorem}[section]
\newtheorem{lem}{Lemma}[section]
\newtheorem{pro}{Proposition}[section]
\newtheorem{defi}{Definition}[section]
\newtheorem{nota}{Notation}[section]
\def\Z{\mathbb Z}
\def\T{\mathbb T}
\def\R{\mathbb R}
\def\bmt{\left[\begin{array}}
\def\emt{\end{array}\right]}
\title[Arnold diffusion in nearly integrable Hamiltonian systems]{The genericity of Arnold diffusion in nearly integrable Hamiltonian systems}
\author{Chong-Qing Cheng}
\address{Department of mathematics, Nanjing Univerisity, Nanjing 210093, China}
\email{chengcq@nju.edu.cn}
\begin{document}
\maketitle
\begin{abstract}
In this paper, we prove that the net of transition chain is $\delta$-dense for nearly integrable positive definite Hamiltonian systems with 3 degrees of freedom in the cusp-residual generic sense in $C^r$-topology,  $r\ge 6$. The main ingredients of the proof existed in \cite{CZ,C17a,C17b}. As an immediate consequence, Arnold diffusion exists among this class of Hamiltonian systems. The question of \cite{C17c} is answered in Section 9 of the paper.
\end{abstract}
\renewcommand\contentsname{Index}

\section{Introduction}
\setcounter{equation}{0}
After he constructed the celebrated example of {\it a priori} unstable systems in \cite{A64}, Arnold raised the conjecture in \cite{A66} on the dynamical instability of nearly integrable Hamiltonian
\begin{equation}\label{eq1}
H(p,q)=h(p)+\epsilon P(p,q),\qquad (p,q)\in\mathbb{R}^n\times\mathbb{T}^n.
\end{equation}
\noindent{\bf Conjecture}.
{\it The ``general case" for a Hamiltonian system $($\ref{eq1}$)$ with $n\ge 3$ is represented by the situation that for an arbitrary pair of neighborhoods of toruses $p=p'$, $p=p''$ in one component of the level set $h(p')=h(p'')$, there exists, for sufficiently small $\epsilon$, an orbit intersecting both neighborhoods.}

The research on the conjecture has two stages: {\it a priori} unstable and {\it a priori} stable cases. After the problem in {\it a priori} unstable case was solved, one has to study how to cross double resonance. It was pointed out by Arnold in \cite{A66} that {\it in order to take the final step in the proof of the above conjecture, it is necessary to examine the transition from single to double resonance}. Indeed, one was able to establish the existence of global transition chain (Theorem 5.1 in \cite{C17b}) after the double resonance problem was solved there. A positive answer to the conjecture for smooth and positive definite Hamiltonian with $n=3$ is an immediate consequence, see Section 9.

The main part of the paper is to prove the existence of the $\delta$-dense transition chain (Theorem \ref{chain}), a slightly stronger form of Theorem 5.1 in Section 5 of \cite{C17b}. The main ingredients of the proof are included in \cite{CZ,C17a,C17b}.

To study the problem, one needs to specify what is the genericity. Mather used the cusp-residual genericity \cite{M04}, we follow him.

\begin{defi}
Let $B_D=\{p\in\mathbb{R}^3:\|p\|\le D\}$. Let $\mathfrak{S}_a,\mathfrak{B}_a\subset C^r(B_D\times\mathbb{T}^3,\mathbb{R})$ denote the sphere and the ball about the origin of radius $a>0$ respectively: $F\in\mathfrak{S}_a$ if and only $\|F\|_{C^r}=a$ and $F\in\mathfrak{B}_a$ if and only $\|F\|_{C^r}\le a$. They inherit the topology from $C^r(B_D\times\mathbb{T}^3,\mathbb{R})$. 

Let $\mathfrak{R}_a$ be a set residual in $\mathfrak{S}_a$, each $P\in\mathfrak{R}_a$ is associated with a set $R_P$ residual in the interval $[0,a_P]$ with $0<a_P\le a$. A set $\mathfrak{C}_a$ is said to be \emph{cusp-residual} in $\mathfrak{B}_a$ if
$$
\mathfrak{C}_a=\{\lambda P:P\in\mathfrak{R}_a,\lambda\in R_P\}.
$$
\end{defi}

A function $h$ is called positive definite if its Hessian matrix $\partial^2h$ is positive definite.
\begin{theo}\label{mainth}
Assume $h\in C^r(B_D,\mathbb{R})$ is positive definite, $r\ge 6$. For any small $\delta>0$, $E>\min h$ with $h^{-1}(E)\subset B_D$ and any two points $p^{\star},p^*\in h^{-1}(E)$, there exists a cusp-residual set $\mathfrak{C}_{\epsilon_0}\subset C^r(B_D\times\mathbb{T}^3,\mathbb{R})$ such that for each $\epsilon P\in\mathfrak{C}_{\epsilon_0}$ there exists an orbit $(p(t),q(t))$ of $\Phi_H^t$ which intersects the $\delta$-neighborhood of $p^{\star}$ and of $p^*$, namely, some $t^{\star},t^*\in\mathbb{R}$ exist such that $\|p(t^{\star})-p^{\star}\|<\delta$ and $\|p(t^*)-p^*\|<\delta$.
\end{theo}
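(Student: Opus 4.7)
The plan is to derive Theorem \ref{mainth} directly from the forthcoming Theorem \ref{chain} by a shadowing argument along a finite sub-chain; the delicate genericity (cusp-residual character of $\mathfrak{C}_{\epsilon_0}$) is already packaged inside Theorem \ref{chain}, so at this stage one essentially needs only to ``read off'' an orbit from the chain.

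First I would fix $h$, $E$, $p^{\star}$, $p^{*}$ and $\delta$ as in the hypotheses and invoke Theorem \ref{chain} with density parameter $\delta/2$. This produces a threshold $\epsilon_0=\epsilon_0(h,E,\delta)$ and a cusp-residual set $\mathfrak{C}_{\epsilon_0}\subset\mathfrak{B}_{\epsilon_0}$ such that, for every $\epsilon P\in\mathfrak{C}_{\epsilon_0}$, the Hamiltonian $H=h+\epsilon P$ admits a net of transition chain consisting of normally hyperbolic invariant cylinders (with their associated Aubry--Mather classes) whose action projection meets every $\delta/2$-ball in the compact level set $h^{-1}(E)$. In particular the chain contains components $\Gamma^{\star}$ and $\Gamma^{*}$ whose action projections lie within $\delta/2$ of $p^{\star}$ and of $p^{*}$ respectively.

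Second, using connectedness of the net, I would extract a finite sub-chain $\Gamma^{\star}=\Gamma_0,\Gamma_1,\ldots,\Gamma_N=\Gamma^{*}$ in which each pair of consecutive members is joined by a legitimate transition: either by a transverse heteroclinic intersection of stable and unstable manifolds of hyperbolic cylinders along single-resonance arcs, or by the weaker $c$-equivalence relation of Bernard--Mather used at double-resonance crossings. Applying the variational shadowing lemma developed in \cite{CZ,C17a,C17b} to this finite chain yields an orbit $(p(t),q(t))$ of $\Phi_H^t$ that visits a prescribed neighborhood of each $\Gamma_j$ in order. Choosing the terminal neighborhoods of radius strictly less than $\delta/2$ about $\Gamma^{\star}$ and $\Gamma^{*}$, and combining with the $\delta/2$-proximity of their action projections to $p^{\star}$ and $p^{*}$, gives times $t^{\star}<t^{*}$ with $\|p(t^{\star})-p^{\star}\|<\delta$ and $\|p(t^{*})-p^{*}\|<\delta$.

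The main difficulty is therefore not the reduction but the preceding Theorem \ref{chain} itself. Producing a $\delta$-dense transition chain requires the careful matching of cylinders across the passage between single- and double-resonance regimes, together with a proof that the transversality or $c$-equivalence conditions used at each crossing are enforced on a cusp-residual set of perturbations; these are exactly the issues settled in \cite{C17a,C17b,CZ}. Granted those results, the step to Theorem \ref{mainth} is a standard finite-chain shadowing argument, and the remaining verification is bookkeeping on the diameters of the neighborhoods at the two endpoints.
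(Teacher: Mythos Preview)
Your reduction is correct and follows the same high-level strategy as the paper: Theorem~\ref{mainth} is deduced from Theorem~\ref{chain} via a connecting-orbit argument, with the genericity already absorbed into the cusp-residual set produced by Theorem~\ref{chain}.

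The paper's execution is somewhat cleaner than your sketch in two respects. First, rather than extracting a finite sub-chain and invoking an ad hoc shadowing lemma, the paper applies Theorem~3.1 of \cite{LC} directly: a generalized transition chain $\Gamma:[0,1]\to H^1(\mathbb{T}^3,\mathbb{R})$ joining $\tilde c^{\star}$ to $\tilde c^{*}$ already yields an orbit whose $\alpha$-limit lies in $\tilde{\mathcal{A}}(\tilde c^{\star})$ and whose $\omega$-limit lies in $\tilde{\mathcal{A}}(\tilde c^{*})$. This black-boxes the variational shadowing entirely. Second, the passage from ``orbit near the Aubry set for $\tilde c$'' to ``$p(t)$ near $\tilde c$ (and hence near $p^{\star}$)'' is handled not by geometric statements about action projections of cylinders, but by upper semi-continuity of the set of minimal orbits under perturbation (Lemma~2.1 of \cite{CY2}): for the integrable $h$ every $\tilde c$-minimal orbit has $p(t)\equiv\tilde c$, so for $H=h+\epsilon P$ with $\epsilon$ small one gets $\|p(t)-\tilde c\|<\delta/2$ along any orbit in $\tilde{\mathcal{A}}(\tilde c)$. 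Your phrase ``action projections lie within $\delta/2$'' is the right intuition but needs exactly this semi-continuity argument to be made rigorous, since a cylinder is not a priori thin in $p$. With those two substitutions your outline coincides with the paper's proof.
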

This theorem proves the conjecture for positive definite Hamiltonian systems with three degrees of freedom in the cusp-residual generic sense in $C^r$-topology with $r\ge 6$.
We apply the variational method for the proof, it is based on Mather's theory and the weak KAM theory \cite{M91,M93,Fa}. Since we study the dynamics on the energy level set $\{H^{-1}(E)\}$, we can modify $h$ outside of a neighborhood of $\{h^{-1}(E)\}$ such that $h$ is Tonelli. A Hamiltonian $H(p,q,t)$ is called Tonelli if it satisfies the  conditions:
\begin{enumerate}
  \item its Hessian matrix $\partial^2_{p_ip_j}H$ in $p$ is positive definite everywhere;
  \item for each $(q,t)$ it holds that $H(p,q,t)/\|p\|\to\infty$ as $\|p\|\to\infty$;
  \item each solution of the Hamilton's equation has all of $\mathbb{R}$ as its domain of definition.
\end{enumerate}
For autonomous system, the third condition is automatically satisfied since each orbit lie on compact energy level set.

The definition of transition chain is recalled and the theorem of global transition chain is stated in Section 2 (Theorem \ref{chain}). In Section 3, we derive the normal form of $H$ when it is restricted in a neighborhood of double resonant point. In Section 4, we show that as a path, the candidate of transition chain is covered by discs around double resonances with controlled periods. In each disc, one Hamiltonian normal form holds.
In Section 5, we distinguish strong from weak double resonances and prove that there are only finitely many strong double resonances. The weak double resonance can be reduced to {\it a priori} unstable case such the problem
is reduced to the finite number of strong double resonances. In Section 6, we construct transition chain crossing strong double resonances by applying the main results of \cite{CZ,C17a,C17b}. By preparing some technical estimates for the nearly integrable system including the deviation of the rotation vectors, the location
of the flat and the estimate of orbits in the Aubry sets in Section 7, we prove Theorem \ref{chain} in Section 8. As an immediate consequence, Theorem \ref{mainth} is proved in Section 9.

A lot of works have been contributed to the topic since the conjecture was raised half a century ago. Normally hyperbolic invariant cylinder is assumed by the {\it a priori} unstable condition, along which the diffusion is well understood, by variational method and geometric methods, cf. \cite{B08,CY1,CY2,DLS1,LC,Tr,Zh1}. There are also many works for the problem, for instance, see  \cite{Bs,BCV,DH1,DH2,DLS2,FM,GL,GR1,GR2,KL1,KL2,X}.

Nearly integrable Hamiltonian is also called {\it a priori} stable system. Unlike {\it a  priori}
unstable system, multiple resonant points destruct the cylinder into many small pieces.
Away from the multiple resonant points, some piece of invariant cylinder was found in \cite{B10} and the method for {\it a priori} unstable system was applied in \cite{BKZ} to obtain local diffusion. Restricted in a neighborhood of multiple resonant point $p''$, $\{\|p-p''\|\le K\sqrt{\epsilon}\}$ with $K\gg 1$, the normal form is non-integrable.
So, it is a challenge to construct cylinder in such a disc. The condition $n=3$ allows us to apply a variational method to construct cylinder which extends $o(\sqrt{\epsilon})$-close to double resonant point, see \cite{CZ,C17a}.
Because of the result and by a new cohomology equivalence, we found a way in \cite{C17b} to pass through the small neighborhood by turning around the strong double resonant point and joining two cylinders. Recently, the mechanism of turning around the point was observed numerically in \cite{GSV}.

Earlier than us, Mather suggested a way to cross the double resonance \cite{M04}, which is based on an observation that the periodic orbits of the averaged system (\ref{averaged}) may approach two homoclinics simultaneously. He suggested to move
the first cohomology class in the channel determined by the prescribed homology class and switch it to the channel determined by one of the homoclinics when it is getting close to the double resonance. From geometric point of view, one expects to construct diffusion orbit that moves along the cylinder with the prescribed homology class, jumps to the cylinder with hole and passes through the neighborhood of double resonance in a way similar to {\it a priori} unstable case, as it was announced in \cite{KZ2}, see \cite{KZ1,Mar} also. For this approach one needs to consider the possibility that bifurcations of NHICs generically appear and could a priori accumulate when they approach the double resonant point.

\section{The definition of the transition chain}
\setcounter{equation}{0}

The terminology {\it $($generalized$)$ transition chain} used in the paper is defined in \cite{CY1,CY2,LC}, borrowed from \cite{A64} where it is defined by geometrical language. Definition in our setting is in a variational language.

For the definition, let $\check\pi:\check M\to\mathbb{T}^n$ be a finite covering of $\mathbb{T}^n$, let $\mathcal{N}(c,\check M)$, $\mathcal{A}(c,\check M)$ denote the Ma\~n\'e set, Aubry set with respect to $\check M$. The condition ({\bf HA}) ({\it hypothesis of Arnold}) is a variational version of Arnold's condition, the stable manifold of a circle intersects its unstable manifold transversally. Such intersection points lie in the Ma\~n\'e set, but not in the Aubry set.

({\bf HA}): there exists a finite covering $\check\pi:\check M\to M$ such that
\begin{enumerate}
  \item in time-periodic case: $\check\pi\mathcal{N}(c,\check M)|_{t=0}\backslash (\mathcal{A}(c,\check M)|_{t=0}+\delta)\ne\varnothing$ is totally disconnected, where $\mathcal{A}(c,\check M)|_{t=0}+\delta=\{x:\mathrm{dist}(x,\mathcal{A}(c,\check M)|_{t=0})\le\delta\}$;
  \item in autonomous case: $\check\pi\mathcal{N}(c,\check M)|_\Sigma\backslash (\mathcal{A}(c,\check M)+\delta)\ne\varnothing$ is totally disconnected, where $\Sigma$ is a section of $\check M$.
\end{enumerate}
It is not necessary to work always in nontrivial finite covering space. If the Aubry set contains more than one class, one can choose $\check M=\mathbb{T}^n$.

To state the definition of transition chain, we also need the concept of {\it cohomology equivalence}. The first version was introduced in \cite{M93}, however, it does not apply to interesting problem in autonomous systems (cf. \cite{B02}).
A new version of cohomology equivalence was introduced for autonomous system in \cite{LC}. For a Tonelli Lagrangian  defined on $T\mathbb{T}^n$, it is defined not with respect to the whole $\mathbb{T}^n$ as in \cite{M93}, but to a section. For $n$-torus $\mathbb{T}^n$, the section is chosen as a non-degenerately embedded section $(n-1)$-dimensional torus. We call $\Sigma_c$ non-degenerately embedded ($n-1$)-dimensional torus by assuming a smooth injection $\varphi$: $\mathbb{T}^{n-1}\to\mathbb{T}^n$ such that $\Sigma_c$ is the image of $\varphi$, and the induced map $\varphi_*$: $H_1(\mathbb{T}^{n-1}, \mathbb{Z})\to H_1(\mathbb{T}^{n},\mathbb{Z})$ is an injection.

For a first cohomology class $c$, we assume that there is a non-degenerate embedded $(n-1)$-dimensional torus $\Sigma_c\subset\mathbb{T}^n$ such that each $c$-semi static curve $\gamma$ transversally intersects $\Sigma_c$. Let
$$
\mathbb{V}_{c}=\bigcap_U\{i_{U*}H_1(U,\mathbb{R}): U\, \text{\rm is a neighborhood of}\, \mathcal {N}(c) \cap\Sigma_c\},
$$
here $i_U$: $U\to M$ denotes inclusion map. $\mathbb{V}_{c}^{\bot}$ is defined to be the annihilator of $\mathbb{V}_{c}$, i.e. if $c'\in H^1(\mathbb{T}^n,\mathbb{R})$, then $c'\in \mathbb{V}_{c}^{\bot}$ if and only if $\langle c',h \rangle =0$ for all $h\in \mathbb{V}_c$. Clearly,
$$
\mathbb{V}_{c}^{\bot}=\bigcup_U\{\text{\rm ker}\, i_{U}^*: U\, \text{\rm is a neighborhood of}\, \mathcal {N}(c) \cap\Sigma_c\}.
$$
There is a neighborhood $U$ of $\mathcal {N}(c)\cap\Sigma_c$ such that $\mathbb{V}_c=i_{U*}H_1(U,\mathbb{R})$ and $\mathbb{V}_{c}^{\bot}=\text{\rm ker}i^*_U$.

\begin{defi}
In autonomous case, $c,c'\in H^1(\mathbb{T}^n,\mathbb{R})$ are said to be cohomologically equivalent if there is a continuous curve $\Gamma$: $[0,1]\to H^1(\mathbb{T}^n,\mathbb{R})$ such that $\Gamma(0)=c$, $\Gamma(1)=c'$, $\alpha(\Gamma(s))$ keeps constant along $\Gamma$, and for each $s_0\in [0,1]$ there exists $\delta>0$ such that $\Gamma(s)-\Gamma(s_0)\in \mathbb{V}_{{\Gamma}(s_0)}^{\bot}$ whenever $s\in [0,1]$ and $|s-s_0|<\delta$.
\end{defi}

With the terminologies introduced as above, we are able to state the definition of transition chain for autonomous system.

\begin{defi}\label{chaindef1}
Two cohomolgy classes $c,c'\in H^1(M,\mathbb{R})$ are joined by a generalized transition chain if a continuous curve $\Gamma$: $[0,1]\to H^1(M,\mathbb{R})$ exists such that $\alpha(\Gamma(s))$ keeps constant and for each $s\in [0,1]$  at least one of the following cases takes place:
\begin{enumerate}
  \item the condition $(${\bf HA}$)$ holds for $\Gamma(s)$, $\mathcal{A}(\Gamma(s'))$ lies in a small neighborhood of $\mathcal{A}(\Gamma(s))$ provided $|s'-s|$ is small;
  \item there is $\delta_s>0$, for each $s'\in (s-\delta_s,s+\delta_s)$, $\Gamma(s')$ is cohomologically equivalent to $\Gamma(s)$.
\end{enumerate}
\end{defi}

It is proved in Theorem 3.1 of \cite{LC} for autonomous case that $\tilde{\mathcal{A}}(\Gamma(0))$ is dynamically  connected to $\tilde{\mathcal{A}}(\Gamma(1))$, namely, there is an orbit of the system which takes $\tilde{\mathcal{A}}(\Gamma(0))$ and $\tilde{\mathcal{A}}(\Gamma(1))$ as its $\alpha$-limit and $\omega$-limit set respectively.

A Tonelli Lagrangian $L$ is uniquely related to a Tonelli Hamiltonian $H$ through Legendre transformation $L(q,\dot q,t)=\max_p\langle \dot q,p\rangle-H(p,q,t)$, which determines a map $\mathscr{L}_H$: $T^*\mathbb{T}^n\times\mathbb{T}\to T\mathbb{T}^n\times\mathbb{T}$: $(p,q,t)\to (\dot q,q,t)$ with $\dot q=\partial_{p}H(p,q,t)$.

\begin{defi}
An orbit $(p(t),q(t),t)\subset T^*\mathbb{T}^n\times\mathbb{T}$ is said to be \emph{$\tilde c$-semi-static (static)} if $\mathscr{L}_H(p(t),q(t),t)=(\dot q(t),q(t),t)$ is $\tilde c$-semi-static $($static$)$. If the system is autonomous, we skip the component of $t$ $($see \cite{Man,M93}$)$.
\end{defi}

Since $H^1(\mathbb{T}^3,\mathbb{R})=\mathbb{R}^3$, we treat the first cohomology class $\tilde c\in H^1(\mathbb{T}^3,\mathbb{R})$ as a point $\tilde c\in\mathbb{R}^3$. In this case, the choice of diffusion path relies on the observation as follows. It holds along each $\tilde c$-semi-static orbit of the integrable Hamiltonian $h(p)$ that $p(t)\equiv\tilde c$. Since Ma\~n\'e set is upper semi-continuous with respect to the perturbation (see Lemma 2.3 of \cite{CY2} and follow the proof there), $\|p(t)-\tilde c\|\ll 1$ holds along any $\tilde c$-semi static orbit for the perturbed system.

Given two points $p^{\star},p^*\in h^{-1}(E)$ and small $\delta>0$, $\exists$ points $\bar p^{\star},\bar p^*\in h^{-1}(E)$ and vectors $k^{\star},k^*\in\mathbb{Z}^3\backslash\{0\}$ such that $\|p^{\star}-\bar p^{\star}\|<\frac \delta 2$, $\|p^*-\bar p^*\|<\frac \delta 2$, $\langle k^{\star},\partial h(\bar p^{\star})\rangle=0$ and $\langle k^*,\partial h(\bar p^*)\rangle=0$. One can choose $(\bar p^{\star},k^{\star})$ and $(\bar p^*,k^*)$ such that $k^{\star}$ and $k^*$ are totally irreducible. A vector $k=(k_1,k_2,k_3)\in\mathbb{Z}\backslash\{0\}$ is said to be {\it totally irreducible} if the greatest common divisor of $k_i$ and $k_j$ is equal to 1 for any $i\ne j$ and $i,j=1,2,3$. It is based on the observation that, for any $k\in\mathbb{Z}^3\backslash\{0\}$, one can choose totally irreducible $k'\in\mathbb{Z}^3\backslash\{0\}$ such that $\langle k,k'\rangle/\|k\|\|k'\|$ is close to 1. For a point $p=(p_1,p_2,p_3)\in\mathbb{R}^3$, we adopt the following notation for its maximum and Euclidean norm respectively
$$
|p|=\max\{|p_1|,|p_2|,|p_3|\}, \qquad \|p\|=\Big(\sum_{i=1}^3p_i^2\Big)^{\frac 12}.
$$

An integer $k\in\mathbb{Z}^3\backslash\{0\}$ determines a path of single resonance (a circle on a sphere)
$$
\Gamma_{k}=\{p\in\mathbb{R}^3:h(p)=E>\min h;\langle k,\partial h(p)\rangle=0\}.
$$
As $h$ is positive definite, $\partial h$ maps $\{p\in\mathbb{R}^3:h(p)\le E\}$ to a ball containing the origin. The circles $\Gamma_{k^{\star}}$ and $\Gamma_{k^*}$ intersect at two points if $k^\star$ is independent of $k^*$, otherwise $\Gamma_{k^*}=\Gamma_{k^\star}$. In both cases, one has a path $\Gamma$ connecting $\bar p^{\star}$ to $\bar p^*$. If $\Gamma_{k^{\star}}$ intersects $\Gamma_{k^*}$, it starts from the point $\bar p^{\star}$, moves along the circle $\Gamma_{k^{\star}}$ until it reaches the intersection point of $\Gamma_{k^{\star}}$ with $\Gamma_{k^*}$, after that, it moves along the circle $\Gamma_{k^*}$ until it arrives at the point $\bar p^*$. If $\Gamma_{k^{\star}}=\Gamma_{k^*}$, $\Gamma$ is just a piece of the circle, connecting $\bar p^{\star}$ to $\bar p^*$, see the figure below.

\begin{figure}[htp] 
  \centering
  \includegraphics[width=5.0cm,height=3.0cm]{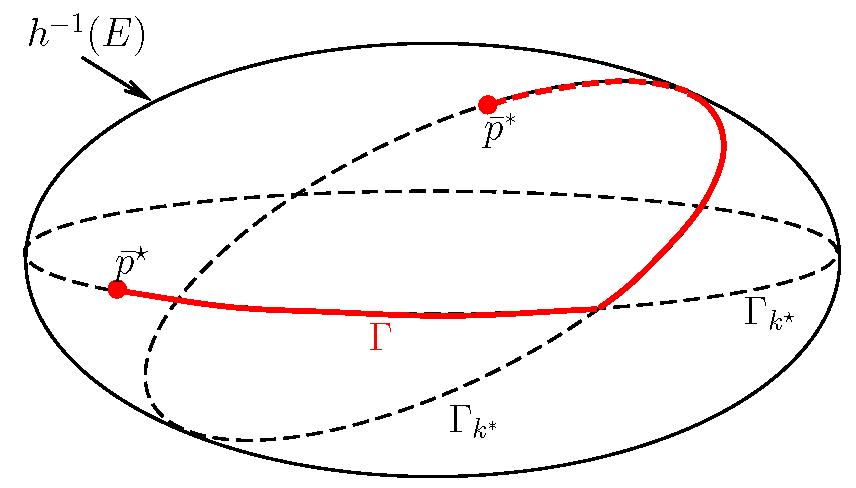}
\end{figure}

Treating $\Gamma$ as a path in $H^1(\mathbb{T}^3,\mathbb{R})$, we have a \emph{candidate of the transition chain}. We shall show that the transition chain lies in a small neighborhood of $\Gamma$.

The following theorem is a slightly stronger version of Theorem 5.1 of \cite{C17b}, the main part of this paper is for the proof of this theorem.

\begin{theo}\label{chain}
Assume $h\in C^r(B_D,\mathbb{R})$ is positive definite with $r\ge 6$. For any small $\delta>0$, $E>\min h$ with $h^{-1}(E)\subset B_D$, there is a cusp-residual set $\mathfrak{C}_{\epsilon_0}\subset C^r(B_D\times\mathbb{T}^3,\mathbb{R})$ such that for each $\epsilon P\in\mathfrak{C}_{\epsilon_0}$ and any two points $p^{\star},p^*\in h^{-1}(E)$, there is a transition chain that connects the class $\tilde c$ to the class $\tilde c'$ which satisfy the condition $\alpha(\tilde c)=\alpha(\tilde c')=E$, $|p^{\star}-\tilde c|<\delta$ and $|p^*-\tilde c'|<\delta$.
\end{theo}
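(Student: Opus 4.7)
The plan is to realize the candidate path $\Gamma\subset \mathbb{R}^3\cong H^1(\mathbb{T}^3,\mathbb{R})$ constructed in Section~2 as a generalized transition chain in the sense of Definition~\ref{chaindef1}. The first step is to verify that a perturbation of $\Gamma$ lies on the energy level $\{\alpha(c)=E\}$: this uses the upper semi-continuity of the Ma\~n\'e set together with the integrable identity $p(t)\equiv\tilde c$ along $\tilde c$-semi-static orbits, so that cohomology classes $\tilde c,\tilde c'$ satisfying $\alpha(\tilde c)=\alpha(\tilde c')=E$ can be chosen in $\delta$-neighbourhoods of $\bar p^{\star},\bar p^{*}$, hence within $\delta$ of $p^{\star},p^{*}$.

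The second step is a local-to-global reduction. Using the normal forms of Section~3 together with the disc-covering result of Section~4, I would cover $\Gamma$ by finitely many single-resonance arcs and finitely many discs of radius $O(\sqrt{\epsilon})$ centred at double-resonant points, with the periods of the associated resonant sublattices under control. On each single-resonance arc one is in the \emph{a priori} unstable regime along a normally hyperbolic invariant cylinder, and a transition chain is produced via case~(2) of Definition~\ref{chaindef1} (cohomology equivalence along a segment), exploiting the techniques of \cite{CY1,CY2,LC}.

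The heart of the argument, and the principal obstacle, is crossing the double-resonant discs. By Section~5 only finitely many strong double resonances lie on $\Gamma$, while at each weak one the normal form reduces to the \emph{a priori} unstable setting and is handled by the previous paragraph. At a strong double resonance I would invoke the variational cylinder construction of \cite{CZ,C17a} to obtain cylinders reaching $o(\sqrt{\epsilon})$-close to the resonant point, and then apply the new cohomology equivalence of \cite{C17b} to realise the ``turning around" manoeuvre joining the two cylinders on either side via case~(1) of Definition~\ref{chaindef1}. The verification of condition~({\bf HA}) and of the non-degeneracy of the relevant minimal homoclinic orbits is where the cusp-residual structure enters: one first restricts to a residual subset $\mathfrak{R}_a\subset\mathfrak{S}_a$ of directions $P$ for which the list of non-degeneracy conditions becomes open-and-dense in the scaling parameter, and then, for fixed $P\in\mathfrak{R}_a$, selects the scaling $\lambda$ in the residual subset $R_P\subset[0,a_P]$ on which the finitely many conditions simultaneously hold.

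The final step is to concatenate the local chains: single-resonance arcs supply case~(2) segments and the strong-resonance discs supply case~(1) segments, and the classes produced by the turning-around procedure on each side of a strong double resonance are designed to match the classes coming from the adjacent single-resonance cylinders. The quantitative estimates collected in Section~7 (deviation of the rotation vectors, location of the flat of the $\beta$-function, and control of orbits in the Aubry sets) are what guarantee that the resulting chain remains in a small neighbourhood of $\Gamma$; since $\Gamma$ itself was chosen within $\delta/2$ of $p^{\star}$ and $p^{*}$, the conclusion of Theorem~\ref{chain} follows.
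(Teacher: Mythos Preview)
Your outline captures the broad strategy of the paper's proof of Theorem~\ref{liangouzao} (Theorem~5.1 of \cite{C17b}): cover a prescribed resonant path by discs around double resonances, separate strong from weak resonances, cross the weak ones as in the \emph{a priori} unstable case, and cross the finitely many strong ones via the annulus of cohomology equivalence from \cite{C17b}. At that level the structure matches the paper.

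However, there is a genuine gap concerning the quantifier order. Theorem~\ref{chain} asserts the existence of a \emph{single} cusp-residual set $\mathfrak{C}_{\epsilon_0}$ that works for \emph{every} pair $p^{\star},p^{*}\in h^{-1}(E)$ simultaneously; your argument fixes $p^{\star},p^{*}$ first (hence fixes $k^{\star},k^{*}$ and the candidate path $\Gamma$) and then produces a cusp-residual set tailored to that particular $\Gamma$. The cusp-residual set you obtain depends on the choice of resonance vectors $k^{\star},k^{*}$, and an uncountable family of pairs $(p^{\star},p^{*})$ could in principle require infinitely many such resonance paths, so an intersection argument is not immediate.

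The missing step, which is precisely the new content of Theorem~\ref{chain} over Theorem~\ref{liangouzao} and is given in the paper's short proof at the end of Section~8, is this: for fixed $\delta>0$ there is an integer $K=K(\delta)$ such that every point of $h^{-1}(E)$ lies within $\delta$ of some resonant circle $\Gamma_k$ with $|k|\le K$. Hence only \emph{finitely many} resonance paths are needed to $\delta$-approximate all possible endpoints, and one obtains the desired universal cusp-residual set by intersecting the finitely many cusp-residual sets furnished by Theorem~\ref{liangouzao} for those paths. You should add this reduction explicitly before invoking the single-path argument.
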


The definition of cohomology equivalence can be further extended to more general version if {\it we treat the time $t$ as an angle variable and choose a section in the extended configuration space $\mathbb{T}^{n+1}$ where the extra dimension is for $t$}. If we write the cohomology class in coordinates $\tilde c=(c,-\alpha(c))$, the section $\Sigma_{\tilde c}$ is chosen for $\mathbb{T}^{n+1}$, $\mathbb{V}_{\tilde c}$ and $\mathbb{V}_{\tilde c}^{\bot}$ are defined in $H_1(\mathbb{T}^{n+1},\mathbb{R})$ and $H^1(\mathbb{T}^{n+1},\mathbb{R})$ respectively.

\begin{defi}\label{def}
In time-periodic case, $c,c'\in H^1(\mathbb{T}^n,\mathbb{R})$ are said to be cohomologically equivalent if there exists a continuous curve $\tilde\Gamma$: $[0,1]\to H^1(\mathbb{T}^{n+1},\mathbb{R})$ such that $\tilde\Gamma(0)=(c,-\alpha(c))$, $\tilde\Gamma(1)=(c',-\alpha(c'))$, and for each $s_0\in [0,1]$ there exists $\delta>0$ such that $\tilde\Gamma(s)-\tilde\Gamma(s_0)\in \mathbb{V}_{{\tilde\Gamma}(s_0)}^{\bot}$ whenever $s\in [0,1]$ and $|s-s_0|<\delta$.
\end{defi}

\section{Normal form}
Given an irreducible integer vector $k'\in\mathbb{Z}^3\backslash\{0\}$, one has a path of single resonance $\Gamma_{k'}=\{p\in H^{-1}(E):\langle k',\partial h(p)\rangle=0\}$.
A point $p''\in\Gamma_{k'}$ is said to be {\it double resonant} if there is an additional vector $k''\in\mathbb{Z}^3\backslash\{0\}$ independent of $k'$ such that $\langle k'',\partial h(p'')\rangle=0.$ Along each resonant path $\Gamma_{k'}$, there are many double resonant points.

We assume that $k'=(k'_1,k'_2,k'_3)\in\mathbb{Z}^3\backslash\{0\}$ is totally irreducible. In this case, there exist $k^*,k^{\star}\in\mathbb{Z}^3$ such that the matrix $M_0^t=(k',k^*,k^{\star})$ is uni-modular. Indeed, if $k'$ contains two non-zero entries, e.g. $k'_1,k'_2\ne 0$, we set $k^{\star}=(0,0,1)$ and $k^*=(k^*_1,k^*_2,0)$ such that $k'_1k^*_2-k'_2k^*_1=1$. If $k'=e_1$, we set $k^*=e_2$ and $k^{\star}=e_3$, where we use the notation that all other entries of $e_i$ are equal to zero except for the $i$-th entry, which is equal to 1. Other cases can be handled similarly.

Under a linear canonical transformation $\mathfrak{M}_0$: $(\bar p,\bar q)\to(p,q)$ such that $q=M^{-1}_0\bar q$, $p=M_0^t\bar p$, we obtain the Hamiltonian $\bar H=\bar h+\epsilon\bar P$ where $\bar h=\mathscr{M}_0^*h$ and $\bar P=\mathscr{M}_0^*P$. It holds along the path $\bar\Gamma_{k'}=M_0^{-t}\Gamma_{k'}$ that $\partial\bar h(\bar p)=(0,\omega_2,\omega_3)$.

In this case, the first and the second resonant condition are $M_0^{-t}k'=\bar k'=e_1$ and $M_0^{-t}k''=\bar k''=(0,\bar k''_2,\bar k''_3)$ respectively. If we introduce the canonical transformation of coordinates $\mathscr{M}$\footnote{In \cite{C17a}, the matrix $M_0M$ is set to be uni-modular. It is not always possible and not necessary.} $(u,v)\to(\bar p,\bar q)$ further
\begin{equation}\label{xuanzuan}
\bar q=M^{-1}u, \qquad \bar p=M^tv,
\end{equation}
where
$$
M^t=\left[\begin{matrix}
1 & 0 & 0\\ 0 & 1 & 0\\ 0 & \frac{\bar k''_3}{\bar k''_2} & 1
\end{matrix}\right],
\qquad
M^{-1}=\left[\begin{matrix}
1 & 0 & 0\\ 0 & 1 & \frac{-\bar k''_3}{\bar k''_2}\\ 0 & 0 & 1
\end{matrix}\right]
$$
if $|\bar k''_2|\ge |\bar k''_3|$ and
$$
M^t=\left[\begin{matrix}
1 & 0 & 0\\ 0 & 1 & \frac{\bar k''_2}{\bar k''_3}\\ 0 & 0 & 1
\end{matrix}\right],
\qquad
M^{-1}=\left[\begin{matrix}
1 & 0 & 0\\ 0 & 1 & 0\\ 0 & \frac{-\bar k''_2}{\bar k''_3} & 1
\end{matrix}\right]
$$
if $|\bar k''_2|\le |\bar k''_3|$. The function $\mathscr{M}^*\bar H$ is $2\pi$-periodic in $(u_1,u_2)$, $2|\bar k''|\pi$ in $u_3$ if $|\bar k''_2|\ge |\bar k''_3|$ and it is $2\pi$-periodic in $(u_1,u_3)$, $2|\bar k''|\pi$ in $u_2$ if $|\bar k''_3|\ge |\bar k''_2|$.

By the construction of $M$ which may not be uni-modular, we see that the function $\mathscr{M}^*H$ respects two symmetries in $u$.
\begin{defi}
Let $M$ be a non-degenerate matrix. A function $f(u)\in C^r(\mathbb{R}^n,\mathbb{R})$ is said to respect the symmetry $M$ if
$$
f(u+2\pi Me_i)=f(u), \qquad \forall\ u\in\mathbb{R}^n,\ e_i\in\mathbb{Z}^n.
$$
\end{defi}
Since $\mathscr{M}^*\bar H$ is $2\pi$-periodic in $(u_1,u_2)$, $2|\bar k''|\pi$-periodic in $u_3$ in the case that $|\bar k''_2|\ge |\bar k''_3|$ and $2\pi$-periodic in $(u_1,u_3)$, $2|\bar k''|\pi$ in $u_2$ in the case that $|\bar k''_3|\ge |\bar k''_2|$
$$
\mathscr{M}^*\bar H(u,v)=\sum_{k\in\mathbb{Z}^3}\bar H_k(M^tv)e^{i\langle k,M^{-1}u\rangle},
$$
$\mathscr{M}^*\bar H$ respects two symmetries in the variable $u$, $M$ and $\mathrm{diag}\{1,1,|\bar k''|\}$ for $|\bar k''_2|\ge |\bar k''_3|$, $M$ and $\mathrm{diag}\{1,|\bar k''|,1\}$ for $|\bar k''_2|\le |\bar k''_3|$ respectively.

At a double point $\bar p''$, the rotation vector $\bar\omega=(0,\bar\omega_2,\bar\omega_3)=\partial\bar h(\bar p'')$ is rational,  i.e. $\exists$ $T>0$ so that $T\omega\in\mathbb{Z}^n\backslash\{0\}$. If $t\omega\notin\mathbb{Z}^n$ $\forall$ $t\in (0,T)$, $T=T(\omega)$ is called \emph{the $($minimal$)$ period}. Since $\bar k''_2\bar\omega_2+\bar k''_3\bar\omega_3=0$, $T=|\bar k''|$. We consider those double resonant points $\{\bar p''\in\bar \Gamma_{k'}\}$ such that $T=T(\partial\bar  h(\bar p''))\le K^*\epsilon^{-\frac 13(1-3\kappa)}$ with $\kappa\in(0,\frac 16)$ and $K^*>0$ is independent of $\epsilon$. In this case, $|\bar k''|\sqrt{\epsilon}\to 0$ as $\epsilon\to 0$.
\begin{lem}\label{normalf}
Assume the second resonant condition at $\bar p''\in\bar \Gamma_{k'}$ is $\bar k''=(0,\bar k''_2,\bar k''_3)$, $\delta'\in(0,1/2)$. Then, there exists a small number $\epsilon_0>0$ such that for each $\epsilon\in(0,\epsilon_0]$, restricted on the level set $\bar H^{-1}(E)$ contained in $\tilde\Sigma_{\epsilon}\times\mathbb{T}^3$ with
$$
\tilde\Sigma_{\epsilon}=\{\bar p:|\bar p-\bar p''|\le K^{-1}\epsilon^{\kappa}\},
$$
where $K=\eta T$, $\eta\in(0,1]$ is independent of $\epsilon$, the Hamiltonian $\bar H$ is reduced, by a symplectic transformation and an energetic reduction, to a time-periodic perturbation of mechanical system with two degrees of freedom
\begin{equation}\label{2dHmil}
G_{\epsilon}(x,y,\theta)=\frac 12\langle By,y\rangle-V(x_1,|\bar k''|x_2)+ R_{\epsilon}\Big(x,y,\frac{\omega_{3}}{\sqrt{\epsilon}}\theta\Big),
\end{equation}
where the $2\times 2$ matrix $B$ is positive definite, $V\in C^r$ is $2\pi$-periodic in $(x_1,|\bar k''|x_2)$, $R_{\epsilon}(x,y,\vartheta)\in C^{r-2} (\mathbb{T}^2\times\Sigma'_{\epsilon}\times|\bar k''|\mathbb{T},\mathbb{R})$, $\vartheta=\omega_{3}\sqrt{\epsilon}^{-1}\theta$ and $\Sigma'_{\epsilon}$ satisfies the condition
$$
\{y:|y|\le (1-\delta')K^{-1}\epsilon^{\kappa-\frac 12}\}\subseteq \Sigma'_{\epsilon}\subseteq
\{y:|y|\le (1+\delta')K^{-1}\epsilon^{\kappa-\frac 12}\}.
$$
Restricted in $\mathbb{T}^2\times\Sigma'_{\epsilon}\times|\bar k''|\mathbb{T}$, some number $a_0=a_0(h,E,k')>0$ exists, independent of $T$ and $P$, such that for each $P\in\mathfrak{B}_1$ one has
$$
\|R_{\epsilon}\|_{C^{r-2}(\mathbb{T}^2\times\Sigma'_{\epsilon}\times|\bar k''|\mathbb{T},\mathbb{R})}\le a_0\epsilon^{\kappa},
$$
if it is treated as a function in $(x,y,\vartheta)$. Finally, the remainder $R_{\epsilon}(x,y,\vartheta)$ respects the symmetries $M$ and $\mathrm{diag}\{1,1,|\bar k''|\}$ in $(x,\vartheta)$ and the symplectic transformation is uniformly bounded for any second resonant condition $\bar k''$.
\end{lem}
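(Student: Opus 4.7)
The proof proceeds in five structural steps: the symplectic normalization~(\ref{xuanzuan}), one step of partial averaging over the fast angle, an energetic reduction on $\bar H^{-1}(E)$, a pendulum-type scaling around $\bar p''$, and a $C^{r-2}$ estimate of the residual.

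First I would verify that under $\mathscr{M}$ the rotation vector at $v''=M^{-t}\bar p''$ reads
\[
\partial_v\tilde h(v'')=M(0,\bar\omega_2,\bar\omega_3)^T=(0,0,\bar\omega_3)^T,
\]
using the second resonance $\bar k''_2\bar\omega_2+\bar k''_3\bar\omega_3=0$; hence $u_3$ is the unique fast angle and $(u_1,u_2)$ are slow. One step of partial averaging, with a generating function $\epsilon S$ built from the non-resonant Fourier coefficients of $\tilde P$ divided by the denominators $\langle k,(0,0,\bar\omega_3)\rangle$, replaces $\epsilon\tilde P$ in $\tilde H$ by $\epsilon V_{\mathrm{res}}(u,v)+\epsilon^2\tilde R_1$. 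The resonant part keeps only those Fourier modes whose $u$-phase $\langle M^{-T}k,u\rangle$ is $u_3$-independent; a direct computation identifies these with $(k_1,n\bar k''_2,n\bar k''_3)$ for $k_1,n\in\mathbb{Z}$, so $V_{\mathrm{res}}(u,v'')=V(u_1,|\bar k''|u_2)$ with the announced periodicity (the irreducibility of $\bar k''$ enters here). I would then solve $\tilde H=E$ for $v_3$ by the implicit function theorem, valid on the slab $|v-v''|\le K^{-1}\epsilon^\kappa$ because $\partial_{v_3}\tilde h(v'')=\bar\omega_3\ne 0$, and take $-v_3$ as the reduced Hamiltonian with $u_3$ in the role of time.

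The rescaling $y=(v_{1,2}-v''_{1,2})/\sqrt{\epsilon}$, $x=(u_1,u_2)$ is canonical up to a conformal factor $\sqrt{\epsilon}^{-1}$; multiplying the reduced Hamiltonian by $\bar\omega_3/\epsilon$---equivalently, passing to the slow time $\theta=\sqrt{\epsilon}u_3/\bar\omega_3$, so that $\vartheta=u_3=\bar\omega_3\theta/\sqrt{\epsilon}$---puts the system in the form~(\ref{2dHmil}) with $B$ the $2\times 2$ upper-left block of $M(\partial^2\bar h(\bar p''))M^t$, positive definite because $\partial^2\bar h$ is. The residual $R_\epsilon$ then absorbs the cubic-and-higher Taylor tail of $\tilde h$ at $v''$, the $v$-variation of $V_{\mathrm{res}}$ away from $v''$, the averaging residue $\epsilon^2\tilde R_1$, and the corrections from iteratively resolving $v_3$ on the energy surface.

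The main technical obstacle is the $C^{r-2}$ bound $\|R_\epsilon\|\le a_0\epsilon^\kappa$ uniform in $T=|\bar k''|$ and in $P\in\mathfrak{B}_1$. Each contribution to $R_\epsilon$ scales as a combination of $\epsilon$- and $T$-powers: the Taylor tail contributes $\sqrt{\epsilon}\|y\|^k$ for $k\ge 3$ on $\Sigma'_\epsilon$, while the averaging residue carries an extra factor $\sim T$ because the relevant non-resonant small denominators are bounded below by $c/T$ after a Fourier cutoff supported by the $C^r$-decay of $\tilde P$; the hypothesis $T\le K^*\epsilon^{-(1-3\kappa)/3}$ together with the choice $K=\eta T$ is precisely the balance that keeps every term below $a_0\epsilon^\kappa$. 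Preservation of the symmetries $M$ and $\mathrm{diag}\{1,1,|\bar k''|\}$ is automatic, since $\mathscr{M}$ respects them by construction and the averaging step preserves the invariant Fourier module to which $\tilde P$ belongs. Uniform boundedness of the transformation in $\bar k''$ follows because $|\bar k''_3/\bar k''_2|\le 1$ in the chosen case (and symmetrically in the other), which bounds the operator norms of $M$ and $M^{-1}$ independently of $\bar k''$.
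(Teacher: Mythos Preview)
Your plan is correct and follows the same overall strategy as the paper: linear normalization $\mathscr{M}$, one averaging step, a pendulum rescaling, and an energetic reduction. Two technical choices differ and are worth noting.

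First, for the averaging step you propose building a generating function from non-resonant Fourier coefficients divided by the small denominators $\langle k,(0,0,\bar\omega_3)\rangle$, with a Fourier cutoff justified by the $C^r$-decay of $\tilde P$. The paper instead uses the closed-form time-integral
\[
F(\bar p,\bar q)=\frac{1}{T}\int_0^T\bar P(\bar p,\bar q+\bar\omega t)\,t\,dt,
\]
which solves the homological equation directly and gives the bound $\|F\|\le T\|\bar P\|$ with no Fourier truncation and no small-divisor bookkeeping. This is cleaner: the factor $T$ you anticipate from the denominator $c/T$ appears transparently, and the loss of two derivatives is immediate from the second-order commutator $\{\{H,F\},F\}$ in the remainder.

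Second, you perform the energetic reduction (solving for $v_3$) before the rescaling; the paper rescales first via~(\ref{qicihua}) and~(\ref{energylevel}), obtaining the three-degree-of-freedom Hamiltonian $\tilde G_\epsilon$ of~(\ref{local}), and only then solves $\tilde G_\epsilon=0$ for $I$. The advantage of the paper's order is that the reduced equation becomes an explicit quadratic in $I$ with leading coefficient $1+O(\epsilon^\kappa)$, so the solution $G_\epsilon$ and its deviation from the model $G_0=\frac12\langle By,y\rangle-V$ can be estimated by elementary algebra and a single implicit-function argument, rather than by iterating corrections as you suggest. Either order works, but the paper's makes the $\Sigma'_\epsilon$ inclusions and the $a_0\epsilon^\kappa$ bound more direct.
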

\noindent{\bf Remark}. Because $V$ is independent of $\vartheta$, the symmetries $M$ and $\mathrm{diag}\{1,1,|\bar k''|\}$ for $V$ are the same as the identity.
\begin{proof}[Proof of Lemma \ref{normalf}]
To get the normal form, we introduce a coordinate transformation $\Phi_{\epsilon F}$ which is defined as the time-$2\pi$-map $\Phi_{\epsilon F}=\Phi^t_{\epsilon F}|_{t=2\pi}$ of the Hamiltonian flow generated by the function $\epsilon F(p,q)$. The function $F$ solves the homological equation
\begin{equation*}\label{homologicalequation}
\Big\langle\frac{\partial\bar  h}{\partial\bar  p}(\bar p''),\frac{\partial F}{\partial\bar  q}\Big\rangle=-\bar P(\bar p,\bar q)+ Z(\bar p,\bar q)
\end{equation*}
where
\begin{equation}\label{resonant term}
Z(\bar p,\bar q)=\frac 1T\int_0^T\bar P(\bar p,\bar q+\bar \omega t)dt=\sum_{(\ell_1,\ell_2)\in\mathbb{Z}^2}\bar P_{\ell}(\bar p)e^{i(\ell_1\langle\bar k',\bar q\rangle+\ell_2\langle\bar k'',\bar q\rangle)},
\end{equation}
Clearly, the function
$$
F(p,q)=\frac 1T\int_0^T\bar P(\bar p,\bar q+\bar \omega t)tdt
$$
solves the homological equation and $\|F\|< T\|\bar P\|$.

Under the transformation $\Phi_{\epsilon F}$ we obtain a new Hamiltonian
\begin{equation}\label{KAMform}
\begin{aligned}
\Phi_{\epsilon F}^*\bar H=&\bar h(\bar p)+\epsilon Z(\bar p,\bar q)+\epsilon\Big\langle\frac{\partial\bar  h}{\partial \bar p}(\bar p)-\frac{\partial\bar  h}{\partial\bar p}(\bar p''),\frac{\partial F}{\partial\bar q}\Big\rangle\\
&+\frac {\epsilon^2}2\int_0^1(1-t)\{\{\bar H,F\},F\}\circ\Phi_{\epsilon F}^tdt.
\end{aligned}
\end{equation}

To simplify the situation further, we introduce another canonical transformation of coordinates $\mathscr{M}$: $(u,v)\to(\bar p,\bar q)$ defined in (\ref{xuanzuan}).
Although the norm $\|\bar k''\|$ will be large if the second resonant condition is weak, the linear coordinate transformation (\ref{xuanzuan}) is uniformly bounded in the second resonant condition since $|\bar k''_3/\bar k''_2|\le 1$ in the first case and $|\bar k''_2/\bar k''_3|\le 1$ in the second case.

We consider the case that $|\bar k''_2|\ge |\bar k''_3|$. In the new coordinates $(u,v)$, the rotation vector takes the form of $(0,0,\omega_3)$. Clearly, $|\omega_3|$ is uniformly lower bounded above zero for all double resonant points on $\Gamma_{k'}$. Because $(\bar k''_2,\bar k''_3)$ is irreducible, it follows from (\ref{xuanzuan}) that  $\mathscr{M}^*\Phi_{\epsilon F}^*\bar H$ is $2\pi$-periodic in $(u_1,u_2)$ and $2|\bar k''|\pi$-periodic in $u_3$.

By the construction, the function $\mathscr{M}^*\Phi_{\epsilon F}^*\bar H$ possesses the symmetries of $M$ and $\mathrm{diag}\{1,1,|\bar k''|\}$. We need to be careful when a perturbation is added, it should respect the symmetries as well. By the relation (\ref{xuanzuan}) one has
$$
\langle k',q\rangle=\langle k',(MM_0)^{-1}u\rangle=u_1,\ \qquad \langle k'',q\rangle=\langle k'',(MM_0)^{-1}u\rangle=\bar k''_2u_2.
$$
It follows from Formula (\ref{resonant term}) and the transformation (\ref{xuanzuan}) that the resonant term has the form of $\mathscr{M}^*\mathscr{M}_0^*Z(p,\langle k',q\rangle,\langle k'',q\rangle)=Z'(v,u_1,\bar k''_2u_2)$.

Let $h'=\mathscr{M}^*\bar h$, $F'=\mathscr{M}^*F$ and $H'=\mathscr{M}^*\Phi_{\epsilon F}^*\bar H$. Since the transformations (\ref{xuanzuan}) is canonical, it preserves the Poison bracket. We obtain from Formula (\ref{KAMform}) that
\begin{align*}
H'=&h'(v)+\epsilon Z'(v,u_1,|\bar k''|u_2)+\epsilon\Big\langle\frac{\partial h'}{\partial v}(v)-\frac{\partial h'}{\partial v}(v''),\frac{\partial F'}{\partial u}\Big\rangle\\
&+\frac {\epsilon^2}2\mathscr{M}^*\int_0^1(1-t)\{\{H,F\},F\}\circ\Phi_{\epsilon F}^tdt,
\end{align*}
where $v''=(M_0M)^{-1}p''$. The function $H'$ determines its Hamiltonian equation
\begin{equation}\label{H-equation}
\frac{du}{dt}=\frac{\partial}{\partial v}H'(u,v),\qquad \frac{dv}{dt}=-\frac{\partial}{\partial u}H'(u,v).
\end{equation}
For this equation we introduce another transformation 
\begin{equation}\label{qicihua}
\tilde G_{\epsilon}=\frac 1{\epsilon}H', \qquad \tilde y=\frac 1{\sqrt{\epsilon}}\Big(v-v''\Big), \qquad \tilde x=u, \qquad s=\sqrt{\epsilon}t,
\end{equation}
where we use the notation $\tilde y=(y_1,y_2,y_3)=(y,y_3)$ and $\tilde x=(x_1,x_2,x_3)=(x,x_3)$.
In the new canonical variables $(\tilde x,\tilde y)$ and the new time $s$, Equation (\ref{H-equation}) turns out to be the Hamiltonian equation with the generating function as the following:
\begin{equation}\label{Hamiltonian}
\tilde G_{\epsilon}=\frac 1{\epsilon}\Big(h'(v''+\sqrt{\epsilon}\tilde y)-h'(v'')\Big)
-V(x_1,|\bar k''|x_2)+\tilde R_{\epsilon}(\tilde x,\tilde y),
\end{equation}
where $V(x_1,|\bar k''|x_2)=-Z'(v'',x_1,|\bar k''|x_2)$ and $\tilde R_{\epsilon}=\tilde R_{\epsilon,1}+\tilde R_{\epsilon,2}+\tilde R_{\epsilon,3}$ with
\begin{equation*}
\begin{aligned}
\tilde R_{\epsilon,1}&=Z'(v''+\sqrt{\epsilon}\tilde y,\tilde x)-Z'(v'',\tilde x),\\
\tilde R_{\epsilon,2}&=\Big\langle\frac{\partial h'}{\partial v}\Big(v''+\sqrt{\epsilon}\tilde y\Big)-\frac{\partial h'}{\partial v}(v''),\frac{\partial F'}{\partial u}\Big\rangle,\\
\tilde R_{\epsilon,3}&=\frac {\epsilon}2\mathscr{M}^*\int_0^1(1-t)\{\{\bar H,F\},F\}\circ\Phi_{\epsilon F}^tdt.
\end{aligned}
\end{equation*}
One step of KAM iteration makes the remainder $\tilde R_{\epsilon}$ lose two times of differentiability. Since the Hamiltonian $\bar H$ is defined in $\tilde\Sigma_{\epsilon}\times\mathbb{T}^2\times|\bar k''|\mathbb{T}$, we see from (\ref{xuanzuan}), (\ref{qicihua}) and $\Phi_{\epsilon F}$ that $\tilde G$ is defined on the domain that is contained in $\{|\tilde y|\le 2K^{-1}\epsilon^{\kappa-\frac 12}\}\times\mathbb{T}^2\times|\bar k''|\mathbb{T}$. Restricted in the domain, we claim that there exists a number $a_2>0$, depending on $h,E,k'$ only, such that
\begin{equation}\label{bound}
\|\tilde R_{\epsilon,i}\|_{C^{r-2}}\le a_2\epsilon^{\kappa}, \qquad i=1,2,3,
\end{equation}
for small $\epsilon>0$.
Indeed, let $m'$ be the upper bound of the largest eigenvalue of $\partial^2h(p)$ for all $p\in h^{-1}(E)$. Since $T\le K^*\epsilon^{-\frac 13(1-3\kappa)}$, one has
$$
\begin{aligned}
|\tilde R_{\epsilon,1}|&\le 2\|P'\|_{C^1}\sqrt{\epsilon}K^{-1}\epsilon^{\kappa-\frac 12}\le 2\|P'\|_{C^1}K^{-1}\epsilon^{\kappa},\\
|\tilde R_{\epsilon,2}|&\le m'\sqrt{\epsilon}K^{-1}\epsilon^{\kappa-\frac 12}T\|P'\|_{C^1}\le m'\|P'\|_{C^1}\epsilon^{\kappa}, \\
|(\mathscr{M}^*)^{-1}\tilde R_{\epsilon,3}|&\le \frac \epsilon2\|P'\|_{C^2}^2\|H'\|_{C^1}T^2=\frac 12(K^*\|P'\|_{C^2})^2\|H'\|_{C^1}\epsilon^{\frac 13+2\kappa}.
\end{aligned}
$$
The estimate on the derivatives of the terms can also be done inductively.

We introduce another coordinate rescaling further
\begin{equation}\label{energylevel}
\theta=\frac{\sqrt{\epsilon}}{\omega_{3}}x_3,\qquad I=\frac{\omega_{3}}{\sqrt{\epsilon}}y_3,
\end{equation}
By expanding $\tilde G_{\epsilon}$ in $O(K^{-1}\epsilon^{\kappa})$ neighborhood of $v''$ in Taylor formula, we obtain
\begin{equation}\label{local}
\begin{aligned}
\tilde G_{\epsilon}&(x,y,I,\theta)=I+\frac 12\Big\langle\tilde B\Big(y,\frac{\sqrt{\epsilon}}{\omega_{3}} I\Big),\Big(y,\frac{\sqrt{\epsilon}}{\omega_{3}}I\Big)\Big\rangle-V(x_1,|\bar k''|x_2)\\
&+\tilde R_h\Big(y,\frac{\sqrt{\epsilon}}{\omega_{3}} I\Big)+\tilde R_{\epsilon}\Big( x_1,x_2,\frac{\omega_{3}}{\sqrt{\epsilon}}\theta, p''+\Big(\sqrt{\epsilon}y,\frac{\epsilon}{\omega_{3}}I\Big)\Big)
\end{aligned}
\end{equation}
where $\tilde B=\frac{\partial^2h'}{\partial v^2}(v'')$ and term $\tilde R_h$ represents the following
$$
\frac 1{\epsilon}\left[h'\Big(v''+\Big(\sqrt{\epsilon}y,\frac{\epsilon}{\omega_{3}} I\Big)\Big)-\Big[h'(v'')+\epsilon I+\frac {\epsilon}2\Big\langle\tilde B\Big(y,\frac{\sqrt{\epsilon}}{\omega_{3}} I\Big),\Big(y,\frac{\sqrt{\epsilon}}{\omega_{3}}I\Big)\Big\rangle\Big]\right].
$$
By the construction, all entries of $\tilde B$ are of order $O(1)$, independent of the period $T$ of the rotation vector $\partial h(p'')$. We write
$$
\tilde B=\left[\begin{matrix} B & B'\\ B'^t & B''
\end{matrix}\right]
$$
where $B$ is a $2\times2$ matrix, $B'$ is a vector with two entries and $B''>0$.

Obviously, restricted on the domain $\{|y|\le 2K^{-1}\epsilon^{\kappa-\frac 12}, |I|\le 2K^{-1}|\omega_3|^{-1}\epsilon^{\kappa-1}\}$, there exists a constant $a_3=a_3(h,E)>0$ such that
\begin{equation}\label{bound2}
\|\tilde R_h\|_{C^{r-2}}\le a_4\epsilon^{\kappa}.
\end{equation}

Let $\Omega_{\epsilon}$ be the image of $\tilde\Sigma_{\epsilon}\times\mathbb{T}^3$ under the maps $\Phi_{\epsilon F'}$, (\ref{xuanzuan}), (\ref{qicihua}) and (\ref{energylevel}). Since the transformation $\Phi_{\epsilon F'}$ is close to identity, $|k''_2|\ge|k''_3|$ is assumed, each section of $\Omega_{\epsilon}$ where $(x,\theta)$ keeps constant lies in $\{|y|\le 2K^{-1}\epsilon^{\kappa-\frac 12}, |I|\le 2K^{-1}|\omega_3|^{-1}\epsilon^{\kappa-1}\}$.
Restricted in $\Omega_{\epsilon}$, we find by direct calculation that $\partial_I\tilde G_{\epsilon}=1+O(\epsilon^{\kappa})$. Therefore, there exists a function $G_{\epsilon}(x,y,\theta)$ solves the equation $\tilde G_{\epsilon}(x,y,-G_{\epsilon},\theta)=0$.

Indeed, restricted in the domain $\{|y|\le 2K^{-1}\epsilon^{\kappa-\frac 12}, |I|\le 2K^{-1}\omega_3^{-1}\epsilon^{\kappa-1}\}$, a constant $a_4=a_4(h)>0$ exists such that  $|\sqrt{\epsilon}\langle B',y\rangle|\le a_4K^{-1}\epsilon^\kappa$ and $|\epsilon\langle By,y\rangle|\le a_4K^{-2}\epsilon^{2\kappa}$. It guarantees that the solution $I=-G_0(x,y)$ of the following quadratic equation in $I$
$$
\begin{aligned}
\tilde G_0=&I+\frac 12\Big\langle\tilde B\Big(y,\frac{\sqrt{\epsilon}}{\omega_{3}} I\Big),\Big(y,\frac{\sqrt{\epsilon}}{\omega_{3}}I\Big)\Big\rangle-V(x_1,|\bar k''|x_2)\\
=&\Big(1+\frac{\sqrt{\epsilon}}{\omega_3}\langle B',y\rangle\Big)I+\frac 12\langle By,y\rangle + \frac {B''}{2\omega_3^2}\epsilon I^2-V(x_1,|\bar k''|x_2)=0
\end{aligned}
$$
has the form of
$$
\begin{aligned}
G_0=&\frac{\omega_3^2(1+\frac{\sqrt{\epsilon}}{\omega_3}\langle B',y\rangle)}{B''\epsilon}\Big(1-\Big(1-
\frac{2B''\epsilon(\frac 12\langle By,y\rangle-V(x_1,|\bar k''|x_2))}{\omega_3^2(1+\frac{\sqrt{\epsilon}}{\omega_3} \langle B',y\rangle)^2}\Big)^{\frac 12}\Big) \\
=&\frac 12\langle By,y\rangle-V(x_1,|\bar k''|x_2)+ R_0(x,y)
\end{aligned}
$$
and some $a_5(h,E)>0$ exists such that restricted in the domain $\mathbb{T}^2\times\{|y|\le 2K^{-1}\epsilon^{\kappa-\frac 12}\}$ one has $\|R_0\|_{C^{r-2}}\le a_5\epsilon^{\kappa}$, provided $P\in\mathfrak{B}_1$.

According to the estimates (\ref{bound}) and (\ref{bound2}), one has
$$
\|\tilde G_{\epsilon}-\tilde G_0\|_{C^{r-2}}=\|\tilde R_h+\tilde R_{\epsilon}\|_{C^{r-2}}\le (3a_2+a_4)\epsilon^{\kappa}
$$
when $(x,\vartheta,y,I)$ is restricted in $\mathbb{T}^2\times|\bar k''|\mathbb{T}\times\{|y|\le 2K^{-1}\epsilon^{\kappa-\frac 12}, |I|\le 2K^{-1}|\omega_3|^{-1}\epsilon^{\kappa-1}\}$ where $\vartheta= \sqrt{\epsilon}^{-1}\omega_3\theta$. It follows from the relations $\partial_I\tilde G_{\epsilon}=1+O(\epsilon^{\kappa})$, $\partial_I\tilde G_0=1+O(\epsilon^{\kappa})$ and the theorem of implicit function that some $a_6=a_6(h,d)>0$ exists such that $\|G_\epsilon-G_0\|_{C^{r-2}}\le a_6\epsilon^{\kappa}$ holds when $|y|\le 2K^{-1}\epsilon^{\kappa-\frac 12}$. Indeed, let $z=(x,y,\vartheta)$, we get from the equations $\tilde G_0(z,-G_0)=0$ and $\tilde G_{\epsilon}(z,-G_{\epsilon})=0$ that
\begin{equation}\label{implicit}
\partial _I\tilde G_0(z,-G_0+\lambda(G_{\epsilon}-G_0))(G_{\epsilon}-G_0)-(\tilde R_h+\tilde R_{\epsilon})(z,G_{\epsilon})=0,
\end{equation}
where $\lambda=\lambda(z,G_{\epsilon}-G_0)\in[0,1]$. It follows from the relation $\partial_IG_0=1+O(\epsilon^{\kappa})$ that $\max_z|G_{\epsilon}-G_0|\le 2(3a_2+a_4)\epsilon^{\kappa}$. For $\xi=0,\epsilon$, the derivative of $G_{\xi}$ in $z$ satisfies the equation
$$
\partial_z\tilde G_{\xi}(z,-G_{\xi}(z))-\partial_I\tilde G_{\xi}(z,-G_{\xi}(z))\partial_zG_{\xi}(z)=0.
$$
Because $\partial^2_I\tilde G_0=\omega_3^{-2}B''\epsilon$, $|\partial_zG_{\epsilon}|=|\partial_I\tilde G_{\epsilon}|^{-1}|\partial_z\tilde G_{\epsilon}|=O(1)$, by taking the difference of these equations we obtain the estimate on first derivative of $G_{\epsilon}-G_0$
$$
\begin{aligned}
|\partial_z(G_{\epsilon}-G_0)(z)|\le &|\partial_I\tilde G_0(z,-G_0)^{-1}|\Big(|{B''}{\omega_3^{-2}}\epsilon(G_\epsilon-G_0)\partial_zG_{\epsilon}|\\
&+|\partial_z\tilde G_{0}(z,-G_{\epsilon})-\partial_z\tilde G_{0}(z,-G_0)|+\Big|\frac d{dz}(\tilde R_h+\tilde R_\epsilon)(z,-G_\epsilon)\Big|\Big) \\
\le& 2(3a_2+a_4)\epsilon^{\kappa}\Big((2|\omega_3^{-2}B''|\epsilon+1)|\partial_zG_{\epsilon}| +2|B'||\omega_3^{-1}|\sqrt{\epsilon}+1\Big).
\end{aligned}
$$
The estimate on higher order derivatives can be done similarly.

From the formula (\ref{2dHmil}), we see that $|G_{\epsilon}(x,y,\theta)|\le a_4K^{-2}\epsilon^{2\kappa-1}$ for $|y|\le 2K^{-1}\epsilon^{\kappa-\frac 12}$ if $\epsilon>0$ is suitably small such that $|V|+|R_{\epsilon}|<\frac 12a_4K^{-2}\epsilon^{2\kappa-1}$. Consequently, the energy level set $\tilde G_{\epsilon}^{-1}(0)$ intersects the domain $\{|y|\le K^{-1}\epsilon^{\kappa-\frac 12}, |I|\le K^{-1}\omega_3^{-1}\epsilon^{\kappa-1}\}\times\mathbb{T}^3$ at the place where $|I|\le a_4\epsilon^{2\kappa-1}$.

Under the composition of $\Phi_{\epsilon F'}$, (\ref{xuanzuan}), (\ref{qicihua}) and (\ref{energylevel}), $\tilde\Sigma_{\epsilon}\times\mathbb{T}^3$ is mapped onto $\Omega_{\epsilon}$. If $\Phi_{\epsilon F}$ is an identity map, each section of $\Omega_{\epsilon}$ where $(x,\theta)$ keeps constant contains the disk  $\{|y|\le K^{-1}\epsilon^{\kappa-\frac 12},I=0\}$, because $|k''_2|\ge|k''_3|$ is assumed in (\ref{xuanzuan}). Since $\Phi_{\epsilon F'}$ approaches to identity and $\frac{a_4\epsilon^{2\kappa-1}}{K^{-1}\omega_3^{-1}\epsilon^{\kappa-1}}=a_4K\omega_3\epsilon^{\kappa}\to 0$ as $\epsilon\to 0$, some $\epsilon_0(\delta')>0$ exists such that, for any $\epsilon\in(0,\epsilon_0]$, the set $\{|y|\le (1-\delta')K^{-1}\epsilon^{\kappa-\frac 12},|I|\le a_4\epsilon^{2\kappa-1}\}$ is contained in each section of $\Omega_{\epsilon}$ where $(x,\theta)$ is fixed, i.e. $\Sigma_{\epsilon}\supset\{|y|\le (1-\delta')K^{-1}\epsilon^{\kappa-\frac 12}\}$. Similarly, we can show that $\Sigma_{\epsilon}\subset\{|y|\le (1+\delta')K^{-1}\epsilon^{\kappa-\frac 12}\}$.

The transformation $\mathscr{M}$ of (\ref{xuanzuan}) is uniformly bounded for all resonant conditions along $\Gamma_{k'}$ and all constants $a_\ell$ with $2\le\ell\le 7$ can be set to be independent of the second resonant condition. Let $a_0=a_6+a_7$, we then complete the proof of the lemma in the case that $|\bar k''_2|\ge|\bar k''_3|$.

If $|\bar k''_2|\le |\bar k''_3|$, in the new coordinates $(u,v)$, the frequency at the double resonance has the form of $M'^t\partial h(p'')=(0,\omega_3,0)$. From (\ref{xuanzuan}) one obtains
$$
\langle k',q\rangle=\langle k',(MM_0)^{-1}u\rangle=u_1,\ \qquad \langle k'',q\rangle=\langle k'',(MM_0)^{-1}u\rangle=\bar k''_3u_3.
$$
By introducing the permutation $u_2\leftrightarrow u_3$ and $v_2\leftrightarrow v_3$, we are again in the situation we have handled. The rest of the proof is the same as above.
\end{proof}

Let $A=B^{-1}$. By the Legendre transformation,
$$
L_{\epsilon}(\dot x,x,\theta)=\max_{y}\{\langle\dot x,y\rangle-G_{\epsilon}(x,y,\theta)\}.
$$
one obtains from the Hamiltonian $G_{\epsilon}$ the Lagrangian (1.1) defined in \cite{C17b}. 

Treated as the set in $T^*M$, it is shown in \cite{B07} that Mather set, Aubry set and Ma\~n\'e set are symplectic invariants. Denote by $\tilde{\mathcal M}_H(c)$ the Mather set of the Tonelli Hamiltonian $H: T^*M\to \mathbb R$ in the cohomology class $c\in H^1(M,\mathbb R)$.
\begin{theo}[\cite{B07}]\label{ThmBernard}
Let $\Phi: T^*M\to T^*M$ be a Hamiltonian diffeomorphism. Then one has
$$
\Phi\tilde{\mathcal{M}}_H(c)=\tilde{\mathcal{M}}_{H\circ\Phi}(\Phi^*c),\quad c\in H^1(M,\mathbb R).
$$
Similarly for Aubry set and Ma\~n\'e set.
\end{theo}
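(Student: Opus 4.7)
The plan is to treat the Mather set symplectically via the variational principle over invariant measures, then show that $\Phi$, being symplectic, induces a bijection on invariant measures that is compatible with the action functional up to the cohomology shift $c\mapsto\Phi^*c$.

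First I would recall Bernard's symplectic formulation of the Mather set: given a closed $1$-form $\eta$ on $M$ representing $c$, $\tilde{\mathcal M}_H(c)$ is the closure of the union of supports of $\Phi_H^t$-invariant compactly supported Borel probability measures $\mu$ on $T^*M$ that minimize the functional $\mu\mapsto\int(H-\pi^*\eta(X_H))\,d\mu$, the infimum being $-\alpha_H(c)$. The functional depends only on the class $c$, since replacing $\eta$ by $\eta+df$ alters the integrand by the derivative of $\pi^*f$ along $X_H$, which integrates to zero against any invariant measure.

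Next I would set up the correspondence of measures. Since $\Phi$ is symplectic, $X_{H\circ\Phi}=(D\Phi)^{-1}X_H\circ\Phi$, so $\mu$ is $\Phi_H^t$-invariant if and only if $\Phi^{-1}_*\mu$ is $\Phi_{H\circ\Phi}^t$-invariant, with supports obeying $\mathrm{supp}(\Phi^{-1}_*\mu)=\Phi^{-1}(\mathrm{supp}(\mu))$. Using $\Phi^*\omega=\omega$, the closed $1$-form $\Phi^*\lambda-\lambda$ on $T^*M$ decomposes, modulo an exact piece, as the pullback of a closed $1$-form on $M$; its cohomology class under the retraction $T^*M\to M$ records exactly the discrepancy $\Phi^*c-c$. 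A direct computation then yields
$$
\int\bigl(H-\pi^*\eta(X_H)\bigr)\,d\mu
=\int\bigl(H\circ\Phi-\pi^*\eta'(X_{H\circ\Phi})\bigr)\,d\Phi^{-1}_*\mu,
$$
where $\eta'$ is a representative of $\Phi^*c$; the exact contribution coming from $\Phi^*\lambda-\lambda$ integrates to zero against the invariant measure.

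It follows that $\mu\mapsto\Phi^{-1}_*\mu$ matches minimizers to minimizers, proving $\Phi\tilde{\mathcal M}_H(c)=\tilde{\mathcal M}_{H\circ\Phi}(\Phi^*c)$. For the Aubry and Mañé sets I would rerun the argument on the Lagrangian side via their characterizations by $c$-calibrated and $c$-semi-static curves: the functional $\int(L-\eta)\,dt$ evaluated on a Lagrangian curve $\gamma$ equals, up to a boundary term, the corresponding functional for the Legendre-transformed Lagrangian evaluated on $\Phi^{-1}\gamma$ against a representative of $\Phi^*c$, so the inequalities defining calibration and semi-staticity are preserved under the bijection induced by $\Phi$.

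The main obstacle will be the careful bookkeeping of the cohomological shift. For a general $\Phi$ (not just a fiber translation by a closed $1$-form or the cotangent lift of a diffeomorphism of $M$) the decomposition of $\Phi^*\lambda-\lambda$ into an exact piece and a pullback from $M$ is not canonical, and one must verify that the resulting class in $H^1(M,\R)$ depends only on $\Phi$ and $c$, not on any choice of primitive. Once this is arranged and $\Phi^*c$ is fixed accordingly, the invariance of the action functional reduces to a symplectic change-of-variables inside the integral, and the statement for all three sets follows uniformly.
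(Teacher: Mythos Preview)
The paper does not supply its own proof of this statement; it is quoted from Bernard \cite{B07} and used as a black box. Your outline reproduces Bernard's original argument, so there is no in-paper proof to compare against.

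One correction: the action functional on $\Phi_H^t$-invariant probability measures must carry the Liouville term,
\[
\mu\ \longmapsto\ \int\bigl(\lambda(X_H)-H-\pi^*\eta(X_H)\bigr)\,d\mu,
\]
since on the cotangent side $L=\langle p,\dot q\rangle-H=\lambda(X_H)-H$. The contribution $\lambda(X_H)$ is precisely what produces $\Phi^*\lambda-\lambda$ under the change of variables; with it omitted your displayed identity does not balance. Once this term is restored your computation goes through as written.

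On the point you flag as the main obstacle: for a Hamiltonian diffeomorphism in the strict sense (the time-$1$ map of a time-dependent Hamiltonian flow, which is exactly the case $\Phi_{\epsilon F}$ the paper applies the theorem to), one has $\mathcal L_{X_{H_t}}\lambda=d(\iota_{X_{H_t}}\lambda-H_t)$, whence $\Phi^*\lambda-\lambda$ is exact and $\Phi^*c=c$. The cohomological bookkeeping therefore collapses in the only case the paper needs. In Bernard's general version for exact symplectomorphisms the shift can be nontrivial, and there $\Phi^*c$ is \emph{defined} as $c$ plus the class of $\Phi^*\lambda-\lambda$ under the identification $H^1(T^*M,\mathbb R)\cong H^1(M,\mathbb R)$, which resolves the well-definedness concern you raise.
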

Applying the theorem, we find that the map $\Phi_{\epsilon F}$ does not induce the change of the structure of the Ma\~n\'e sets and the Aubry sets. The transformations $\mathscr{M}_0$ and (\ref{xuanzuan}) are linear, (\ref{qicihua}) and (\ref{energylevel}) are rescaling. Therefore, the conditions (1) and (2) in Definition \ref{chaindef1} remain unchanged under the coordinate transformations.

\section{The covering property}

We are going to show that the whole resonant path $\Gamma_{k'}$ can be covered by the disks where one obtains the normal form of (\ref{2dHmil}).

\begin{theo}[Covering property]\label{th4.1} Some $\epsilon_0>0$ exists such that for each $\epsilon\in(0,\epsilon_0]$ there exists a finite set of double resonant points $\{p''_i\in\Gamma_{k'}\}$ with the properties
\begin{enumerate}
  \item the period $T_i$ of the frequency $\partial h(p''_i)$ is not large than $T_i\le K^*\epsilon^{-\frac 13(1-3\kappa)}$ where $\kappa\in(0,\frac 16)$ and $K^*$ is independent of $\epsilon$;
  \item $\Gamma_{k'}$ is covered by the union of the disks $\{\|p-p''_i\|\le T_i^{-1}\epsilon^{\kappa}\}$.
\end{enumerate}
\end{theo}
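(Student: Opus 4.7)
The plan is to reduce the covering property to a rational approximation problem for the frequency direction along $\bar\Gamma_{k'}$ and apply Dirichlet's theorem. After the linear transformation of Section 3, the frequency on $\bar\Gamma_{k'}$ takes the form $\bar\omega = (0,\omega_2,\omega_3)$, and a point $\bar p''\in\bar\Gamma_{k'}$ is double resonant exactly when the direction of $(\omega_2,\omega_3)$ is rational: each primitive pair $(a,q)\in\mathbb{Z}^2$ with $q\omega_3(\bar p'') = a\omega_2(\bar p'')$ corresponds to the second resonance $\bar k'' = (0,-a,q)$ of period $T = |\bar k''| = \max(|a|,|q|)$. Setting $T_0 = K^*\epsilon^{-(1-3\kappa)/3}$, for each $\bar p\in\bar\Gamma_{k'}$ I apply Dirichlet's theorem in whichever of the two charts $\{|\omega_3|\le|\omega_2|\}$, $\{|\omega_2|\le|\omega_3|\}$ contains $\bar\omega(\bar p)$, obtaining coprime $(a,q)$ with $1\le q\le T_0$ and
\begin{equation*}
|q\omega_3(\bar p) - a\omega_2(\bar p)| \;\le\; \frac{|\omega_2(\bar p)|}{T_0} \;\le\; \frac{C_1}{T_0},
\end{equation*}
where $C_1 = C_1(h,E)$ is a uniform bound on $|\bar\omega|$ along $\bar\Gamma_{k'}$; the period bound $T\le T_0$ of assertion (1) is immediate.

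The central geometric input is a uniform twist estimate for the direction map $\psi:\bar\Gamma_{k'}\to S^1$, $\bar p\mapsto(\omega_2,\omega_3)/|(\omega_2,\omega_3)|$: there exists $c_0=c_0(h,E,k')>0$ with $|\psi'(s)|\ge c_0$ along arclength. Positive definiteness of $\partial^2 h$ enters essentially here. Writing $\bar B := \partial^2\bar h$ and $\tau$ for the unit tangent to $\bar\Gamma_{k'} = \{\bar h = E\}\cap\{\omega_1 = 0\}$, one has $\tau\perp\bar\omega$ and $\tau\perp\bar B e_1$; the latter relation forces $(\bar B\tau)_1 = 0$, so $\bar B\tau$ lies in the same 2-plane $\Pi := \{\omega_1 = 0\}$ as $\bar\omega$. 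If $\bar B\tau$ were parallel to $\bar\omega$, one would have $\tau^t\bar B\tau = 0$, contradicting $\tau^t\bar B\tau\ge\lambda_{\min}(\bar B)>0$; quantifying this contradiction yields $|\psi'(s)|\ge\lambda_{\min}(\bar B)/|\bar\omega|\ge c_0$ on the compact curve $\bar\Gamma_{k'}$. With $\bar k'' = (0,-a,q)$, the function $f(\bar p) := \langle\bar k'',\partial\bar h(\bar p)\rangle$ then satisfies $|f(\bar p)|\le C_1/T_0$ and $|f'(s)|\ge c_0|\bar k''|$, so the intermediate value theorem produces a double resonant $\bar p''\in\bar\Gamma_{k'}$ with
\begin{equation*}
\|\bar p - \bar p''\| \;\le\; \frac{|f(\bar p)|}{c_0\,|\bar k''|} \;\le\; \frac{C_1}{c_0\,T\,T_0}.
\end{equation*}

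The disk-inclusion condition $C_1/(c_0\,T\,T_0)\le T^{-1}\epsilon^\kappa$ reduces to $T_0\ge(C_1/c_0)\epsilon^{-\kappa}$, which is implied by $T_0 = K^*\epsilon^{-(1-3\kappa)/3}$ together with $\kappa\in(0,1/6)$ (so that $(1-3\kappa)/3 > \kappa$) once $\epsilon_0$ is chosen sufficiently small. Finiteness of the cover follows because only finitely many coprime $(a,q)$ satisfy $\max(|a|,|q|)\le T_0$, and the estimate is transferred back to the original coordinates through the uniformly bi-Lipschitz linear map $\mathfrak{M}_0$ of Section 3 at the cost of bounded constants. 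The main obstacle I anticipate is making the uniform twist bound $|\psi'|\ge c_0$ quantitatively sharp enough that neither $c_0$ nor the Dirichlet constant $C_1$ degenerates near the boundary between the two charts (where one of $|\omega_2|$, $|\omega_3|$ becomes small); this is resolved by the symmetry of the argument between the charts and by enlarging $K^*$ to absorb the loss near the transition.
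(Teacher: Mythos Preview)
Your proposal is correct and follows the same route as the paper: reduce to one-dimensional rational approximation via Dirichlet, then exploit the monotone variation of the frequency direction along $\bar\Gamma_{k'}$ to produce a nearby double-resonant point. The paper executes the geometric step by radially projecting $\Gamma_{\omega,k'}=\partial\bar h(\bar\Gamma_{k'})$ onto the boundary of the unit square $\{|\omega|=1\}$ and asserting that radial lines meet the curve transversally (stated as a consequence of positive definiteness and $E>\min h$); you instead prove a uniform twist bound $|\psi'|\ge c_0$ directly from $\tau^t\bar B\tau>0$ and then apply the intermediate value theorem to $f=\langle\bar k'',\bar\omega\rangle$. The two formulations are equivalent, and your argument in fact supplies the transversality detail the paper leaves implicit. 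One small point to tighten: the inequality $|f'(s)|\ge c_0|\bar k''|$ does not follow from $|\psi'|\ge c_0$ alone, since $f'$ also contains the radial contribution $\tfrac{d|\bar\omega|}{ds}\langle\bar k'',\hat\omega\rangle$; you need the additional observation, immediate from the Dirichlet step, that $\bar k''$ is nearly orthogonal to $\bar\omega$ near $\bar p$, so that the rotational term $|\bar\omega|\,\psi'\,\langle\bar k'',\hat\omega^\perp\rangle$ dominates. With that adjustment the estimate goes through and matches the paper's conclusion.
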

\begin{proof}
If $\Gamma_{k'}$ is a set in $\mathbb{R}^n$ the theorem is proved in Chapter 3 of \cite{Lo} with the condition $\kappa<(3n+3)^{-1}$. We use their idea to prove the covering property under the condition $\kappa<\frac 16$. To do that, we use Dirichlet's approximation theorem.

For real $x$, let $[x]\in\mathbb{Z}$ denote the integer part and $\{x\}\in(0,1)$ denote the decimal part. So, one has
$$
x=[x]+\{x\},
$$
We use notation
$$
\|x\|_{\mathbb{Z}}=\min\{\{x\},1-\{x\}\}=\text{\rm dist}(x,\mathbb{Z}).
$$

\begin{pro}\label{normalpro1} {\rm (Dirichlet)} Let $\omega\in\mathbb{R}$ and $K>1$ be a real number. There exists an integer $k$, $1\le k<K$, such that
$$
\|k\omega\|_{\mathbb{Z}}\le K^{-1}.
$$
\end{pro}

For $\omega=(\omega_2,\omega_3)\in\mathbb{R}^2$, let $|\omega|=\max\{|\omega_2|,|\omega_3|\}$. Let $\mathbb{S}^1=\{\omega\in\mathbb{R}^2:|\omega|=1\}$ be the boundary of unit square, it has four sides.
By applying Dirichlet's approximation theorem, for any $\omega\in\mathbb{S}^1$ and any integer $K>0$ there exists $1\le k<K$ such that $\|k\min\{\omega_2,\omega_3\}\|_{\mathbb{Z}}\le K^{-1}$. In other words, given any $\omega\in\mathbb{S}^1$, some rational vector $\omega^*$ on the same side exists such that $T\omega^*\in\mathbb{Z}^2$ with $T\le K$ and
\begin{equation}\label{dis1}
\text{\rm dist}(T\omega,T\omega^*)=\|T\omega-T\omega^*\|\le K^{-1}.
\end{equation}
To apply the inequality, we notice that $\partial h(\Gamma_{k'})$ is a circle lying on certain 2-dimensional plane.

By the definition, $\Gamma_{k'}$ is a smooth circle. Under the transformation $\mathscr{M}_0$ introduced in the last section, $M_0^{-1}\partial h$ maps the circle $\Gamma_{k'}$ to a smooth circle $\Gamma_{\omega,k'}=M_0^{-1}\partial h(\Gamma_{k'})$ restricted on the plane $\{(\omega_1,\omega_2,\omega_3):\omega_1=0\}$.

Since $h$ is positive definite and $h(\Gamma_{k'})\equiv E>\min h$, each line $\{\lambda(0,\omega_2,\omega_3):\lambda\in\mathbb{R}\}$ intersects the circle $\Gamma_{\omega,k'}$ transversally and each $\omega\in\Gamma_{\omega,k'}$ determines a unique $\lambda_{\omega}>0$ such that $\lambda_{\omega}(\omega_2,\omega_3)\in\mathbb{S}^1$. Therefore, some number $d>0$ exists so that the distance between any two points $\omega=(0,\omega_2,\omega_3)$, $\omega^*=(0,\omega^*_2,\omega^*_3)\in\Gamma_{\omega,k'}$ is upper bounded by
$$
\|\omega-\omega^*\|\le d\|\lambda_\omega\omega-\lambda_{\omega^*}\omega^*\|.
$$
If $\omega^*$ is a rational vector with period $T$, the period of $\lambda_{\omega^*}\omega^*$ will be $\lambda_{\omega^*}^{-1}T$.

The map $\partial h$ establishes a diffeomorphism between $\Gamma_{k'}$ and $\Gamma_{\omega,k'}$. Given an integer $K>0$, it follows from (\ref{dis1}) that, for any rotation vector $\omega\in\Gamma_{\omega,k'}$, there exists some rational rotation vector $\omega^*\in\Gamma_{\omega,k'}$ such that $\lambda_\omega\omega$, $\lambda_{\omega^*}\omega^*$ lie on the same side, such that
$$
\|\omega-\omega^*\|\le d\|\lambda_\omega\omega-\lambda_{\omega^*}\omega^*\|\le \frac{d\lambda_{\omega^*}}{KT}.
$$
where $T>0$ is the period of $\omega^*$ such that $\lambda_{\omega^*}^{-1}T<K$.

For the ball $B_D\subset\mathbb{R}^3$, there are positive numbers $m'=m'(D)\ge m=m(D)>0$ such that
$$
m\|v\|^2\le\langle\partial^2h(y)v,v\rangle\le m'\|v\|^2, \qquad \forall\ y\in B_D,\ v\in\mathbb{R}^3.
$$
Because $h$ is assumed strictly convex, there exist exactly two points $y,y^*\in B_D$ such that $\partial h(y)=\omega$, $\partial h(y^*)=\omega^*$ and $\|y-y^*\|\le m^{-1}\|\omega-\omega^*\|$. Because $\lambda^{-1}_{\omega^*}T\le K$, the covering property $\|y-y^*\|\le T^{-1}\epsilon^{\kappa}$ is guaranteed if we choose
\begin{equation*}\label{normaleq8}
K=\frac {d\lambda_{\omega^*}}{m}\epsilon^{-\kappa}.
\end{equation*}
Again, because $\lambda^{-1}_{\omega^*}T\le K$, one has $T\le K^*\epsilon^{-\frac 13(1-3\kappa)}$ if $K=\frac {d\lambda_{\omega^*}}{m}\epsilon^{-\kappa}\le\frac {K^*}{\Lambda}\epsilon^{-\frac 13(1-3\kappa)}$. It holds for each $\epsilon\in(0,\epsilon_0]$ if $\epsilon_0$ satisfies the condition
$$
\epsilon_0^{\frac{1-3\kappa}3-\kappa}\le \frac{mK^*}{d\Lambda^2},
$$
where $\Lambda=\max_{\omega\in\Gamma_{\omega,k}}\lambda_{\omega}$. For $\kappa<\frac 16$, such $\epsilon_0>0$ exists. 
\end{proof}

\section{The finiteness of strong double resonance}
Because of Theorem \ref{th4.1}, the path of resonance $\Gamma_{k'}$ is covered by the discs $\{\|p-p''_i\|<T_i^{-1}\epsilon^{\kappa}\}$, where $\kappa<\frac 16$, $T_i\le K^*\epsilon^{-\frac 13(1-3\kappa)}$ is the period of the double resonance at $p''_i$, $K^*$ is independent of $\epsilon$. Therefore, the size of each disk is between $O(\epsilon^{1/3})$ and $O(\epsilon^{1/7})$. When the number $\epsilon>0$ decreases, the number of the disks increases. We are going to distinguish strong double resonant points from weak resonant points by the second resonant relation, and we will see that the number of strong double resonant points is finite, independent of $\epsilon$ in generic case.

We consider the resonant term (\ref{resonant term}) which takes the form
\begin{equation}\label{resonance}
Z(p,q)=Z_{k'}(p,\langle k',q\rangle)+Z_{k',k''_i}(p,\langle k',q\rangle,\langle k''_i,q\rangle)
\end{equation}
where $k'=k^\star$ or $k^*$, $k''_i$ is the additional resonant condition
\begin{equation}\label{decom}
\begin{aligned}
Z_{k'}&=\sum_{j\in\mathbb{Z}\backslash\{0\}}P_{jk'}(p)e^{j\langle k',q\rangle i}, \\
Z_{k',k''_i}&=\sum_{(j,l)\in\mathbb{Z}^2, l\neq 0}P_{jk'+lk''_i}(p)e^{(j\langle k',q\rangle+l\langle k''_i,q\rangle)i}.
\end{aligned}
\end{equation}
Because $|P_k|$ decrease fast as $\|k\|$ increases $|P_k|\le O(\|k\|^{-r})$, the term $Z_{k',k''_i}$ is treated as a small perturbation to $Z_{k'}$ provided $\|k''_i\|$ is large.

Treated as a function of $x=\langle k',q\rangle$, we consider $Z_{k'}(p,x)$ as a family of functions $Z_{k'}(p,\cdot)$: $\mathbb{T}\to\mathbb{R}$, where $p\in\Gamma_{k'}$ is treated as a parameter. Such an observation allows us to apply the result of \cite{Zh2}.
\begin{theo}[Theorem 1.1 of \cite{Zh2}]\label{ThmCZ}
{\it Let $F_{\lambda}:\mathbb{T}\to\mathbb{R}$ be a family of $C^4$-smooth functions so that $F_{\lambda}$ is Lipschitz in the parameter $\lambda\in[0,1]$. Then, there exists an open-dense set $\mathfrak{V}\subset C^r(\mathbb T,\mathbb{R})$ $(r\ge 4)$ such that for each $V\in \mathfrak{V}$ and each $\lambda\in [0,1]$, every global minimal point of $F_{\lambda}-V$ is non-degenerate.}
\end{theo}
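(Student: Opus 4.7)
The plan is to show that $\mathfrak{V}$ is both open and dense in $C^r(\mathbb{T},\mathbb{R})$.

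For openness, fix $V_0\in\mathfrak{V}$. For each $\lambda\in[0,1]$ the non-degeneracy condition $(F_\lambda-V_0)''(x)>0$ at every global minimum forces the minima to be isolated, hence finite by compactness of $\mathbb{T}$. The implicit function theorem applied to $(F_\lambda-V)'(x)=0$ shows each such critical point persists smoothly as a function of $(\lambda,V)$ under small $C^r$-perturbations of $V_0$, with the second derivative staying positive. The minimum value $m(\lambda,V):=\min_x(F_\lambda-V)(x)$ is Lipschitz in $(\lambda,V)$, and together with the Lipschitz dependence of $F_\lambda$ on $\lambda$ this rules out new global minima appearing from outside a fixed neighborhood of the old ones. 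Compactness of $[0,1]$ then produces a uniform $C^r$-ball around $V_0$ contained in $\mathfrak{V}$.

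For density, given $V_0\in C^r(\mathbb{T},\mathbb{R})$ and $\epsilon>0$, I would introduce a finite-dimensional perturbation family. Cover $\mathbb{T}$ by small open intervals $\{U_i\}_{i=1}^N$, pick $C^r$-bump functions $\psi_i$ supported in $U_i$, and set $V_s=V_0+\sum_i s_i\psi_i$ for $s\in B\subset\mathbb{R}^N$ a small ball; the $\psi_i$ are chosen so that near every $x\in\mathbb{T}$ there are indices for which the values $(\psi(x),\psi'(x),\psi''(x))$ span $\mathbb{R}^3$. Consider the Lipschitz map
\begin{equation*}
\Psi:[0,1]\times\mathbb{T}\times B\longrightarrow\mathbb{R}^2,\qquad (\lambda,x,s)\longmapsto \bigl((F_\lambda-V_s)'(x),\,(F_\lambda-V_s)''(x)\bigr).
\end{equation*}
The spanning condition guarantees $\Psi$ is transverse to $0$ in the $s$-direction, so the zero set $\mathcal{D}=\Psi^{-1}(0)$ has codimension $2$, i.e. dimension $N$, matching that of $B$. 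Applying a Lipschitz version of Sard's theorem to the projection $\mathcal{D}\to B$, for almost every $s\in B$ the fiber $\mathcal{D}_s\subset[0,1]\times\mathbb{T}$ is a finite set $\{(\lambda_j,x_j)\}_{j=1}^K$ of degenerate critical points.

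The main obstacle is that a degenerate critical point is not automatically a degenerate global minimum: to conclude $V_s\in\mathfrak{V}$ one needs each bad pair $(\lambda_j,x_j)$ to satisfy $x_j\notin\arg\min(F_{\lambda_j}-V_s)$. For each $j$ in turn, perform an additional perturbation $V_s\mapsto V_s+t\varphi_j$ with $\varphi_j$ supported in a tiny neighborhood of $x_j$ disjoint from the other $x_k$; by choosing the sign and magnitude of $t$ one either destroys the degeneracy at $(\lambda_j,x_j)$ or raises the value of $F_{\lambda_j}-V_s$ at $x_j$ strictly above $m(\lambda_j,V_s)$, so that $x_j$ is no longer a global minimum. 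Because the surgery is supported in a small interval, the structure at the other bad pairs is unaffected, and iteration over the finitely many $j$ produces $V\in\mathfrak{V}$ arbitrarily $C^r$-close to $V_0$. Combined with the openness already established, this shows $\mathfrak{V}$ is open and dense. The delicate interplay between the local transversality mechanism and the global-minimum constraint, together with the need to replace smoothness in $\lambda$ by mere Lipschitz regularity when handling the uncountable family $\{F_\lambda\}_{\lambda\in[0,1]}$, is the crux of the argument.
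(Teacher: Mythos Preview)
The paper does not prove this theorem; it is quoted from \cite{Zh2} and used as a black box, so there is no in-paper proof to compare against directly.

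Your openness argument is sound in outline, though invoking the implicit function theorem in the variable $\lambda$ presumes more than Lipschitz dependence of $F_\lambda$ on $\lambda$. A cleaner route is a straight compactness argument: if $V_n\to V_0$ in $C^r$ and each $F_{\lambda_n}-V_n$ has a degenerate global minimum at $x_n$, pass to convergent subsequences and obtain a degenerate global minimum of $F_{\lambda^*}-V_0$, contradicting $V_0\in\mathfrak{V}$.

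The density argument has a real gap in the surgery step. After the Sard argument you hold finitely many degenerate critical points $(\lambda_j,x_j)$ of $F_\lambda-V_s$. Perturbing by $t\varphi_j$ does not, however, ``destroy the degeneracy at $(\lambda_j,x_j)$'' nor simply raise the value there and be done: under a small perturbation the degenerate critical point persists and \emph{moves} to a nearby pair $(\lambda_j(t),x_j(t))$, and what you must rule out is that $x_j(t)$ be a global minimum of $F_{\lambda_j(t)}-V_s-t\varphi_j$. That is a moving target you never address; your statement that ``the structure at the other bad pairs is unaffected'' controls the $x_k$ with $k\ne j$ but says nothing about the displaced copy of $(\lambda_j,x_j)$ itself.

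The efficient route (and the evident reason for the hypothesis $r\ge 4$) is to observe that a degenerate \emph{minimum} automatically has vanishing third derivative: if $g'(x)=g''(x)=0$ but $g'''(x)\ne 0$, the Taylor expansion $g(x+h)-g(x)=\tfrac{1}{6}g'''(x)h^3+O(h^4)$ changes sign near $h=0$, so $x$ cannot be a local minimum. Hence the relevant singular locus is
\[
\bigl\{(\lambda,x):(F_\lambda-V)'(x)=(F_\lambda-V)''(x)=(F_\lambda-V)'''(x)=0\bigr\},
\]
cut out by three independent conditions rather than two. Running your same transversality-plus-Sard scheme with the map $\Psi$ taking values in $\mathbb{R}^3$ instead of $\mathbb{R}^2$, the zero set has dimension $N-1$ inside the $(N+2)$-dimensional space $[0,1]\times\mathbb{T}\times B$, so its projection to $B\subset\mathbb{R}^N$ has measure zero. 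For almost every $s$ there is then no $(\lambda,x)$ with a degenerate local minimum whatsoever, and no surgery step is needed.
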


To apply the theorem here, we notice that the path $\Gamma_{k'}$ induces a decomposition
$$
C^r(B_D\times \mathbb{T}^3,\mathbb{R})=C^r(B_D\times \mathbb{T},\mathbb{R})\oplus C^r(B_D\times \mathbb{T}^3,\mathbb{R})/C^r(B_D\times \mathbb{T},\mathbb{R})
$$
via
$$
P(p,q)=Z_{k'}(p,\langle k',q\rangle)+P'(p,q),\quad (k'=k^{\star},k^*),
$$
where $Z_{k'}$ is defined in (\ref{decom}) consisting of Fourier modes of $P$ in span$_\Z\{k'\}$, and $P'=P-Z_{k'}\in C^r(B_D\times \mathbb{T}^3,\mathbb{R})/C^r(B_D\times\mathbb{T},\mathbb{R})$.

Therefore, there exists an open-dense set $\mathfrak{V}\subset C^r(B_D\times \mathbb{T},\mathbb{R})$ and consequently an open-dense set $\mathfrak{P}=\mathfrak{V}\oplus C^r(B_D\times \mathbb{T}^3,\mathbb{R})/C^r(B_D\times\mathbb{T},\mathbb{R})\subset C^r(B_D\times\mathbb{T}^3)$ such that for all $P\in\mathfrak{P}$, it holds simultaneously for each $p\in\Gamma_{k'}$ that the resonant term $Z_{k'}(p,\cdot)$, treated as a function of $x$, is non-degenerate at its maximal point, namely,
the second derivative $\partial ^2_xZ_{k'}$ at its maximum is uniformly upper bounded below $0$.

Let $y_0$ be a vector such that $By_0=(0,1)^t$. A non-degenerate maximal point $x_{1,i}$ of $Z_{k'}(p''_i,\cdot)$ corresponds to a normally hyperbolic invariant cylinder (NHIC)
$$
\Pi_i=\{(x,y)\in\mathbb{T}^2\times\{\|y\|\le K_i^{-1}\epsilon^{\kappa-\frac 12}\}:x_1=x_{1,i},y=\lambda y_0,
\lambda\in\mathbb{R}\}
$$
of the Hamiltonian system $\frac 12\langle By,y\rangle+Z_{k'}(p''_i,x_1)$. In the $(p,q)$-coordinates, the cylinder passes through a neighborhood of the double resonant point $p''_i$. Such a phenomenon allows us to distinguish
strong double resonant points from weak ones by the existence of weakly invariant cylinder.

\begin{defi}[cf. \cite{B10}]
An open manifold is said to be weakly invariant for a flow if its vector field is tangent to the manifold.
\end{defi}

Applying the theorem of normally hyperbolic invariant manifold (NHIM), we see that, for the Hamiltonian $\frac 12\langle By,y\rangle+Z_{k'}+Z_{k',k''_i}$, there exists some weakly invariant cylinder $\Pi'_i$ lying in a small neighborhood of $\Pi_i\cap\{\|y\|\le K_i^{-1}\epsilon^{\kappa-\frac 12}-1\}$ provided $|k''_i|$ is large enough.

Indeed, if we introduce a cut-off $C^{\infty}$-function $\chi$: $[0,\infty)\to\mathbb{R}$ satisfying $\chi(\nu)=0$ for $\nu\ge K_i^{-1}\epsilon^{\kappa-\frac 12}$ and $\chi(\nu)=1$ for $\chi\le K_i^{-1}\epsilon^{\kappa-\frac 12}-1$. Then by applying the theorem of NHIM to the Hamiltonian $\frac 12\langle By,y\rangle+Z_{k'}(p''_i,x_1)+\chi(\|y\|)Z_{k',k''_i}(p''_i,x)$ we see that the cylinder $\Pi_i$ survives the small perturbation $\chi(\|y\|)Z_{k',k''_i}(p''_i,x)$ if $|k''_i|$ is sufficiently large. Restricted on the region $\{\|y\|\le K_i^{-1}\epsilon^{\kappa-\frac 12}-1\}$ the survived cylinder is obviously weakly invariant. Since the normally hyperbolic splitting still exists on the survived cylinder, we call it normally hyperbolic weakly invariant cylinder, or NHWIC for short.

\begin{defi}
A double resonance is said to be \emph{weak} if the double resonant term $Z_{k',k''_i}$ is so small that can be treated as a small perturbation
$$
\frac 12\langle By,y\rangle+Z_{k'}\to \frac 12\langle By,y\rangle+Z_{k'}+Z_{k',k''_i}
$$
such that one can apply the theorem of NHIM. In this case, the NHWIC survives the perturbation. Otherwise, the double resonance is said to be \emph{strong}.
\end{defi}

Although the number of the disks $\{\|p-p''_i\|<T_i^{-1}\epsilon^{\kappa}\}$ depends on $\epsilon$, we have

\begin{pro}\label{pro}
There exists a set $\mathfrak{P}$ open-dense in $\mathfrak{S}_1$ such that for each $P\in\mathfrak{P}$, the number of strong double resonances along $\Gamma_{k'}$ is finite and independent of $\epsilon$.
\end{pro}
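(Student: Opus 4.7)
The plan is to combine the generic non-degeneracy supplied by Theorem \ref{ThmCZ} with the Fourier decay of $P\in C^r$ ($r\ge 6$) and the standard persistence theorem for normally hyperbolic invariant manifolds.

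First I would construct the candidate set $\mathfrak{P}$. The direct-sum decomposition
$$
C^r(B_D\times\mathbb{T}^3,\mathbb{R})=C^r(B_D\times\mathbb{T},\mathbb{R})\oplus C^r(B_D\times\mathbb{T}^3,\mathbb{R})/C^r(B_D\times\mathbb{T},\mathbb{R})
$$
together with Theorem \ref{ThmCZ} produces an open-dense set $\mathfrak{P}_0\subset C^r(B_D\times\mathbb{T}^3,\mathbb{R})$ such that, given any $P\in\mathfrak{P}_0$ and any $p\in\Gamma_{k'}$, the one-dimensional reduction $Z_{k'}(p,\cdot):\mathbb{T}\to\mathbb{R}$ has all of its global maxima non-degenerate. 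Compactness of $\Gamma_{k'}$ then upgrades this to a uniform bound
$$
-\partial_x^2 Z_{k'}(p,x_*(p))\ge\mu>0
$$
at every global maximizer $x_*(p)$, with $\mu=\mu(P,h,k')$ independent of the double-resonance index $i$ and of $\epsilon$. I take $\mathfrak{P}=\mathfrak{P}_0\cap\mathfrak{S}_1$.

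Next I would exploit $\mu$ via NHIM theory. In the averaged mechanical system $\frac{1}{2}\langle By,y\rangle+Z_{k'}(p''_i,x_1)$, the cylinder $\Pi_i$ is normally hyperbolic, with exponential rate bounded below by a positive constant uniform in $i$ and in $\epsilon$ (depending only on $\mu$ and on the positive-definiteness constants of $B$ along $\Gamma_{k'}$). The only perturbations to be handled are the cross resonant term $Z_{k',k''_i}$ and the small remainder $R_\epsilon=O(\epsilon^\kappa)$ from Lemma \ref{normalf}. From $|P_k|\le C_r\|k\|^{-r}$ with $r\ge 6$ one obtains the decay
$$
\|Z_{k',k''_i}(p''_i,\cdot)\|_{C^2}\longrightarrow 0\quad\text{as}\quad |k''_i|\to\infty,
$$
because every multi-index $jk'+lk''_i$ with $l\ne 0$ appearing in its Fourier expansion has $\|jk'+lk''_i\|$ forced to grow with $|k''_i|$. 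Combined with the $O(\epsilon^\kappa)$ bound on $R_\epsilon$, this furnishes a threshold $N^*=N^*(\mu,h,k',r)$, independent of $\epsilon$, such that whenever $|k''_i|>N^*$ the NHIM persistence theorem applied to the cut-off Hamiltonian $\frac{1}{2}\langle By,y\rangle+Z_{k'}(p''_i,x_1)+\chi(\|y\|)Z_{k',k''_i}(p''_i,x)$ yields a weakly invariant cylinder $\Pi'_i$ in a neighborhood of $\Pi_i\cap\{\|y\|\le K_i^{-1}\epsilon^{\kappa-\frac 12}-1\}$; by the definition of weak double resonance above, the associated $p''_i$ is then weak. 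It remains to count. A strong double resonance at $p''_i\in\Gamma_{k'}$ forces $|k''_i|\le N^*$, the set of $k''_i\in\mathbb{Z}^3\setminus\{0\}$ independent of $k'$ with $|k''_i|\le N^*$ is finite, and for each such $k''_i$ the condition $\langle k''_i,\partial h(p)\rangle=0$ cuts the smooth circle $\Gamma_{k'}$ transversally in at most two points since $\partial h$ is a diffeomorphism onto the embedded circle $\Gamma_{\omega,k'}$. Hence the number of strong double resonances along $\Gamma_{k'}$ is $O((N^*)^3)$, independent of $\epsilon$.

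The main obstacle I anticipate is the $\epsilon$-uniformity of the threshold $N^*$. Both the hyperbolic rate on $\Pi_i$ and the $C^2$-bound on $Z_{k',k''_i}$ live at the level of the averaged Hamiltonian \eqref{2dHmil}, which is itself $\epsilon$-independent; the only $\epsilon$-dependence enters through $R_\epsilon$ and through the domain size $\|y\|\le K_i^{-1}\epsilon^{\kappa-\frac 12}$. Care is needed to shrink the cylinder by $1$ in $\|y\|$ so as to produce a compact region of normal hyperbolicity on which $\chi\equiv 1$, yet on which the persistence statement applies with estimates uniform in $\epsilon$; this is the point where the cut-off construction preceding the definition of weak resonance must be reconciled with the NHIM persistence quantitatively.
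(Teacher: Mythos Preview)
Your proposal is correct and follows essentially the same approach as the paper's own proof: apply Theorem~\ref{ThmCZ} to obtain a uniform lower bound on $-\partial_x^2 Z_{k'}$ at maxima, then use Fourier decay of $Z_{k',k''_i}$ together with NHIM persistence to conclude that all but finitely many second resonances are weak. The paper's proof is considerably terser (it dispenses with the explicit counting and the discussion of $\epsilon$-uniformity that you supply), but the underlying ideas are identical.
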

\begin{proof}
For perturbation $P$, let $x_p$ be the maximal point of the single resonant term $Z_{k'}(p,\cdot)$ with respect to the variable $x$. By Theorem \ref{ThmCZ}, there exists an open-dense set $\mathfrak{P}'\subset C^r(B_D\times\mathbb{T}^3,\mathbb{R})$, for each $P\in\mathfrak{P}'$ there exists some $d_P>0$ such that the second derivative $\partial^2_xZ_{k'}(p,x_p)<-d_P$ holds for all $p\in\Gamma_{k'}$. Since $\lambda P\in\mathfrak{P}'$ for any $\lambda\ne 0$ if $P\in\mathfrak{P}'$, the restriction of $\mathfrak{P}'$ to $\mathfrak{S}_1$ is clearly open-dense. Since the double resonant term $Z_{k',k''_i}$ will be sufficiently small provided $\|k''_i\|$ is sufficiently large, we complete the proof.
\end{proof}

\begin{nota} Let $\Lambda_{\epsilon}$ denote the set of the subscripts $i$ such that the resonant circles $\Gamma_{k^\star}\cup\Gamma_{k^*}$ is covered by the disks
$$
\Gamma_{k^\star}\cup\Gamma_{k^*}\subset\cup_{i\in\Lambda_\epsilon}\{p\in\mathbb{R}^3:\|p-p''_i\|<T_i^{-1} \epsilon^{\kappa}\}
$$
where the period $T_i$ of the double resonant point $p''_i$ is not larger than $K^*\epsilon^{-(1-3\kappa)/3}$ with $\kappa\in(0,\frac 16)$.
Let $\Lambda_s\subset\Lambda_{\epsilon}$ be the subset so that the double resonance at $p''_i$ is strong if and only if $i\in\Lambda_s$.
\end{nota}

\section{Dynamics around double resonance}
Let us apply Lemma \ref{normalf} to the double resonant point $p''_i$, where the second resonant condition is denoted by $k''_i$, let $\bar k''_i=M_0^{-t}k''_i=(0,\bar k''_{i,2},\bar k''_{i,3})$. We denote by $G_{\epsilon,i}$ the normal form reduced from $H$ in a neighborhood of $p''_i$, which takes the form of (\ref{2dHmil})
\begin{equation}\label{2dHmili}
G_{\epsilon,i}(x,y,\theta)=\bar G_i(x,y)+ R_{\epsilon,i}(x,y,\vartheta(\theta)),
\end{equation}
where $\bar G_i=\frac 12\langle B_iy,y\rangle-V_i(x_1,|\bar k''_i|x_2)$, $V_i\in C^{r}$ is $2\pi$-periodic in $(x_1,|\bar k''_i|x_2)$ such that $\max V_i=0$, $\vartheta=\frac{\omega_{i,3}}{\sqrt{\epsilon}}\theta$, $R_{\epsilon,i}\in C^{r-2}(\mathbb{T}^2\times\Sigma'_{\epsilon,i}\times|\bar k''_i|\mathbb{T},\mathbb{R})$ with
$$
\{|y|\le (1-\delta')(\eta T_i)^{-1}\epsilon^{\kappa-\frac 12}\}\subset\Sigma'_{\epsilon,i}\subset\{|y|\le (1+\delta')(\eta T_i)^{-1}\epsilon^{\kappa-\frac 12}\},
$$
$\eta\in(0,1)$ is independent of the period $T_i$, the remainder $R_{\epsilon,i}$ is bounded by $a_0\epsilon^{\kappa}$ in $C^{r-2}$-topology if it is considered as a function of $(x,y,\vartheta)$.

Because the normal form (\ref{2dHmili}) is obtained by the transformation (\ref{xuanzuan}) where the matrix is denoted by $M_i$, the function $R_{\epsilon,i}$ respects the symmetry $M_i$. So we have
\begin{equation}\label{symmetryM}
G_{\epsilon,i}(x_1,x_2+\bar k''_{i,3}/\bar k''_{i,2},\vartheta+1)=G_{\epsilon,i}(x_1,x_2,\vartheta)
\end{equation}
Pull back to the original space of $(p,q)$, we have $V_i(x_1,|\bar k''_i|x_2)=V_i(\langle k',q\rangle,\langle k''_i,q\rangle)$.

Let $\alpha_{\epsilon,i}$ and $\beta_{\epsilon,i}$ denote the $\alpha$- and $\beta$-function for $G_{\epsilon,i}$ respectively, with which one defines the Fenchel-Legendre transformation $\mathscr{L}_{\alpha}$: $H_1(M,\mathbb{R})\to H^1(M,\mathbb{R})$ by
$$
c\in\mathscr{L}_{\alpha}(\omega)\ \ \iff \ \ \alpha(c)+\beta(\omega)=\langle c,\omega\rangle.
$$
By the definition, the $\beta$-function is the Fenchel-Legendre dual of the $\alpha$-function. By adding a constant to $G_{\epsilon,i}$, we can assume $\min\alpha_{\epsilon,i}=0$.

To understand the dynamics around strong double resonances, we apply the results obtained in \cite{C17a,C17b,CZ}.
\begin{theo}\label{C17bTh}
There exists a residual set $\mathfrak{V}_i\subset C^r(\mathbb{T}^2,\mathbb{R})$ with $r\ge 2$. Each $V_i\in\mathfrak{V}_i$ is associated with some positive numbers $\Delta_{V_i}, \epsilon_i$ such that for any $\xi\in (0,\Delta_{V_i})$, $\epsilon\in[0,\epsilon_i]$ the circle $\alpha^{-1}_{\epsilon,i}(\xi)$ establishes a transition chain $($of cohomology equivalence$)$. These circles make up an annulus $\mathbb{A}_{i}$ surrounding the set $\mathbb{F}_{i}=\{c:\alpha_{\epsilon,i}(c)=0\}$.
\end{theo}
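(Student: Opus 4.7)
The plan is to first establish the statement for the unperturbed mechanical Hamiltonian $\bar G_i(x,y)=\tfrac{1}{2}\langle B_iy,y\rangle-V_i(x_1,|\bar k''_i|x_2)$ on $T^{*}\mathbb{T}^2$ and then lift it to the full normal form $G_{\epsilon,i}$ by an upper semi-continuity argument. Since $\max V_i=0$, for generic $V_i$ the global maximum is non-degenerate and attained at a unique point $x^{*}$, so $(x^{*},0)$ is a hyperbolic equilibrium of the flow of $\bar G_i$. The Mather $\alpha$-function $\bar\alpha_i$ of $\bar G_i$ is non-negative, and its zero-set $\mathbb{F}_i=\bar\alpha_i^{-1}(0)$ is a compact convex body whose interior classes correspond to invariant measures supported on $(x^{*},0)$ (cf.\ \cite{C17a}). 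For every sufficiently small $\xi>0$, $\bar\alpha_i^{-1}(\xi)$ is a topological circle encircling $\mathbb{F}_i$; the supremum $\Delta_{V_i}$ of admissible $\xi$ is chosen so that all these circles remain inside the Jacobi-metric regime where their minimizers can be analyzed.

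To realize each such circle as a transition chain of cohomology equivalence in the sense of Definition \ref{chaindef1}(2), I would use the Jacobi-metric representation: each $c$-minimal curve of $\bar G_i$ at level $\bar\alpha_i(c)=\xi$ projects to a shortest geodesic of the metric $g_{\xi}=(\xi+V_i)\,ds^{2}_{B_i}$ in a prescribed rational homology class, possibly completed by orbits asymptotic to $(x^{*},0)$. The residual set $\mathfrak{V}_i$ is constructed (following \cite{CZ,C17a}) so that, for each $V_i\in\mathfrak{V}_i$ and each $\xi\in(0,\Delta_{V_i})$, along the circle $\bar\alpha_i^{-1}(\xi)$ only finitely many ``channel-boundary'' classes carry more than one shortest $g_{\xi}$-geodesic, while at every other class the minimizer is unique and non-degenerate. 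In this ``channel-interior'' case $\mathbb{V}_c$ is spanned by the rotation vector of the unique Mather measure at $c$, which by Legendre duality is normal to $\bar\alpha_i^{-1}(\xi)$; hence $\mathbb{V}_c^{\perp}$ is tangent to the level circle and the local cohomology equivalence of Definition \ref{chaindef1}(2) is automatic. At a channel-boundary class, the two coexisting shortest geodesics together with the hyperbolic equilibrium produce a transversal homoclinic or heteroclinic configuration inside the Ma\~n\'e set; the new cohomology equivalence introduced in \cite{C17b} then applies: by choosing the section $\Sigma_c\subset\mathbb{T}^2$ to intersect the selected loop transversally and only once, $\mathbb{V}_c$ is reduced back to one dimension and cohomology equivalence extends continuously across the boundary.

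To pass from $\bar G_i$ to $G_{\epsilon,i}$ I would invoke: (i) Bernard's invariance (Theorem \ref{ThmBernard}), combined with the explicit canonical transformations used in Lemma \ref{normalf}, to transport Ma\~n\'e/Aubry/Mather sets between the original and the normal-form coordinates; (ii) upper semi-continuity of the Ma\~n\'e set under $C^{0}$-small perturbations of the Hamiltonian (Lemma 2.3 of \cite{CY2} in the autonomous analogue used in \cite{LC}); (iii) the uniform estimate $\|R_{\epsilon,i}\|_{C^{r-2}}\le a_0\epsilon^{\kappa}$ from Lemma \ref{normalf}. Combining these, one chooses $\epsilon_i>0$ depending on $V_i$ (essentially through a lower bound for the transversality at channel-boundary classes and the non-degeneracy margin of the unique minimizers elsewhere) so that for every $\epsilon\in[0,\epsilon_i]$ and every $\xi\in(0,\Delta_{V_i})$ the whole circular transition chain built for $\bar G_i$ survives and yields the claimed transition chain for $\alpha_{\epsilon,i}^{-1}(\xi)$.

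The main obstacle is the channel-boundary analysis. One has to prove that along $\bar\alpha_i^{-1}(\xi)$ the competing rational asymptotic cycles of minimizers give rise, for generic $V_i$, only to finitely many true bifurcation classes rather than an accumulating family, and that at each such class the two coexisting minimizers intersect transversally and share a non-degenerate homoclinic or heteroclinic orbit through $(x^{*},0)$. Only then can a one-dimensional $\mathbb{V}_c^{\perp}$ be selected in a uniform way along the entire circle so that the local cohomology equivalences concatenate into a closed chain and persist under the small remainder $R_{\epsilon,i}$ uniformly in $\xi\in(0,\Delta_{V_i})$. This is the technical heart of the constructions carried out in \cite{CZ,C17a,C17b}, and the residual genericity of $\mathfrak{V}_i$ is tailored precisely to rule out the competing pathologies.
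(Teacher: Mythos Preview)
Your two-step strategy --- first establish the annulus of cohomology equivalence for the autonomous mechanical system $\bar G_i$, then extend to $G_{\epsilon,i}$ via upper semi-continuity of the Ma\~n\'e set under the small remainder $R_{\epsilon,i}$ --- is exactly the structure of the paper's proof. The difference is one of economy: the paper treats the first step as a direct invocation of Theorem~1.1 of \cite{C17b} (whose proof already contains the Jacobi-metric / channel-interior versus channel-boundary analysis you sketch), rather than re-deriving that analysis. Your outline of that argument is reasonable but, as you yourself note, is not itself a proof; the technical content you describe (finiteness of bifurcation classes along each level circle, transversality at those classes, one-dimensionality of $\mathbb{V}_c$) is precisely what is established in \cite{CZ,C17a,C17b} and cited here as a black box.

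One genuine point you overlook is the periodicity mismatch. Theorem~1.1 of \cite{C17b} is stated for potentials on the standard $\mathbb{T}^2$, whereas $V_i(x_1,|\bar k''_i|x_2)$ is $2\pi$-periodic in $x_1$ but only $2\pi/|\bar k''_i|$-periodic in $x_2$. The paper handles this explicitly by the canonical rescaling $\mathscr{T}\colon(x_1,|\bar k''_i|x_2)\mapsto(\phi_1,\phi_2)$, $(y_1,|\bar k''_i|^{-1}y_2)\mapsto(I_1,I_2)$, which turns $\bar G_i$ into $\bar G'_i=\tfrac12\langle B'_iI,I\rangle-V_i(\phi)$ on the standard $T^*\mathbb{T}^2$ with $B'_i=\mathrm{diag}(1,|\bar k''_i|)\,B_i\,\mathrm{diag}(1,|\bar k''_i|)$. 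Theorem~1.1 of \cite{C17b} is then applied to $\bar G'_i$, yielding the residual set $\mathfrak{V}_i\subset C^r(\mathbb{T}^2,\mathbb{R})$ in the correct function space, and the conclusion is pulled back via $\mathscr{T}^*$ before the perturbation $R_{\epsilon,i}$ is added. This rescaling is elementary but is needed to make the citation legitimate; without it your argument does not actually place $\mathfrak{V}_i$ in $C^r(\mathbb{T}^2,\mathbb{R})$ as stated.
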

\begin{proof}
If we are satisfied with the property that $V\in\mathfrak{V}_i$ is $2\pi$-periodic both in $x_1$ and in $x_2$, it is just an application of Theorem 1.1 of \cite{C17b}. We are going to show that the set $\mathfrak{V}_i$ is residual in the space of $C^r$-functions which are $2\pi$-periodic in $(x_1,|\bar k''_i|x_2)$. For the purpose, we introduce a canonical transformation $\mathscr{T}$: $(x_1,|\bar k''_i|x_2)\to(\phi_1,\phi_2)$, $(y_1,|\bar k''_i|^{-1}y_2)\to(I_1,I_2)$ and get the Hamiltonian from (\ref{2dHmili})
\begin{equation}\label{averaged}
\bar G'_i=\mathscr{T}_*\bar G'_i=\frac 12\langle B'_iI,I\rangle-V_i(\phi),\qquad (\phi,I)\in\mathbb{T}^2\times\mathbb{R}^2,
\end{equation}
where $B'_i=\mathrm{diag}(1,|\bar k''_i|)B_i\mathrm{diag}(1,|\bar k''_i|)$.
The theorem holds for $\bar G'_i$ under the condition that $\mathfrak{V}_i$ is residual in $C^r(\mathbb{T}^2,\mathbb{R})$.

Pull $\bar G'_i$ back to the space of $(x,y)$, we see that the theorem holds because $G_{\epsilon,i}$ is a small perutrbation of $\mathscr{T}^*\bar G'_i$: $G_{\epsilon,i}=\mathscr{T}^*\bar G'_i+R_{\epsilon,i}$ and Ma\~n\'e set is upper semi-continuous with respect to small perturbation.
\end{proof}

Given an irreducible class $g\in H_1(\mathbb{T}^2,\mathbb{Z})\backslash\{0\}$, let  $\mathbb{C}_{i,g}=\cup_{\nu>0}\mathscr{L}_{\alpha_{\epsilon,i}}(\nu g)$. To consider the Aubry set of $G_{\epsilon,i}$ for $c\in\mathbb{C}_{i,g}$, we consider the truncated Hamiltonian $\bar G'_i$ of (\ref{averaged})
and let $\bar\alpha_i$ be the $\alpha$-function for $\bar G'_i$. For $c\in\mathscr{L}_{\bar\alpha_{i}}(\lambda g)$, each $c$-minimal orbit is periodic. To stress its topological information, we also call it $\lambda g$-minimal orbit. The parameter $\lambda_\ell$ is called bifurcation point, if there are two or more $\lambda_\ell g$-minimal orbits. Applying Theorem 2.1 of \cite{CZ}, Theorem 3.1 and the argument of Section 4 of \cite{C17a} we have

\begin{pro}\label{pro6.1}
Given an irreducible class $g'\in H_1(\mathbb{T}^2,\mathbb{Z})\backslash\{0\}$ and $\lambda_0>0$, there exists an open-dense set $\mathfrak{V}_i\subset C^{r}(\mathbb{T}^2,\mathbb{R})$ with $r\ge 5$ such that for each $V_i\in\mathfrak{V}_i$, it holds simultaneously for all $\lambda\in[\lambda_0,\infty)$ that the Mather set of $\bar G'_i$ for each $c\in\mathscr{L}_{\bar\alpha_{i}}(\lambda g')$ consists of hyperbolic periodic orbits. Indeed, except for finitely many $\{\lambda_\ell\}$ the Mather set is made up two hyperbolic periodic orbits, for all other $\lambda\in[\lambda_0,\infty)$ the Mather set contains exactly one hyperbolic periodic orbit.
\end{pro}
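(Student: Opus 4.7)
The plan is to combine the Maupertuis reduction of the Mather set to closed Jacobi geodesics with the two cited results: Theorem 2.1 of \cite{CZ} for generic hyperbolicity, and Theorem 3.1 together with Section 4 of \cite{C17a} for generic finiteness of bifurcations. First comes the reduction. For $c\in\mathscr{L}_{\bar\alpha_i}(\lambda g')$ with $\lambda\ge\lambda_0$ (after enlarging $\lambda_0$ if necessary so that $\bar\alpha_i(c)>-\min V_i$), $\bar\alpha_i$ takes the constant value $E=E(\lambda)$ on this ray and the Mather set lies entirely in the regular energy surface $\bar G_i'=E$. By the Maupertuis principle, the $\lambda g'$-minimal orbits correspond after time reparametrization to the closed geodesics of the Jacobi metric
$$
ds^2_{E}=2(E+V_i(\phi))\,\langle(B'_i)^{-1}d\phi,d\phi\rangle
$$
that represent the homology class $g'$ and are shortest within $g'$. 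Thus the Mather set of $\bar G_i'$ at $c$ is the canonical lift to $T^*\mathbb{T}^2$ of the union of the shortest closed $ds^2_E$-geodesics in the class $g'$.

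Next is generic hyperbolicity. Theorem 2.1 of \cite{CZ}, applied to the one-parameter family over the rotation ray $\mathbb{R}_+g'$, yields an open-dense set $\mathfrak{V}^{(1)}_i\subset C^r(\mathbb{T}^2,\mathbb{R})$ such that for each $V_i\in\mathfrak{V}^{(1)}_i$ and each $\lambda\ge\lambda_0$ every periodic minimizer in the Mather set is non-degenerate in its second variation; by Ma\~n\'e's hyperbolicity criterion for non-degenerate periodic minimizers every component of the Mather set is then a hyperbolic periodic orbit. The transversality perturbation of \cite{CZ} stays within the Tonelli mechanical class, which is why $r\ge 5$ suffices for the quantitative estimates on the second variation.

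Finally, I would apply Theorem 3.1 and Section 4 of \cite{C17a} to obtain an open-dense set $\mathfrak{V}^{(2)}_i\subset C^r(\mathbb{T}^2,\mathbb{R})$ such that for $V_i\in\mathfrak{V}^{(2)}_i$ the set $\mathcal B\subset[\lambda_0,\infty)$ at which the shortest $g'$-geodesic of $ds^2_{E(\lambda)}$ fails to be unique is discrete, with exactly two hyperbolic minimizers at each $\lambda_\ell\in\mathcal B$ and exactly one at every other $\lambda$. To upgrade discreteness to finiteness on an unbounded interval I would use a high-energy asymptotic: after the conformal rescaling the metric $ds^2_E/(2E)$ converges in $C^{r-1}$ to the smooth flat Riemannian metric $\langle(B'_i)^{-1}d\phi,d\phi\rangle$ as $E\to\infty$, whose unique shortest closed $g'$-geodesic is non-degenerate, so uniqueness and non-degeneracy persist for all $E\ge E_0(V_i)$. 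Consequently $\mathcal B\subset[\lambda_0,\lambda(E_0)]$ and finiteness follows from compactness; the desired open-dense set is then $\mathfrak{V}_i=\mathfrak{V}^{(1)}_i\cap\mathfrak{V}^{(2)}_i$.

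The main obstacle is verifying the transversality hypotheses of \cite{CZ} and \cite{C17a} \emph{simultaneously} over the whole unbounded parameter range $\lambda\in[\lambda_0,\infty)$; the high-energy asymptotic above is precisely what collapses this to a finite-codimension transversality problem on the compact subinterval $[\lambda_0,\lambda(E_0)]$, and it is the only ingredient of the argument that is not already packaged in the two cited theorems.
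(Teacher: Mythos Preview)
Your overall architecture matches the paper's: the proposition is stated there as an immediate application of Theorem~2.1 of \cite{CZ} together with Theorem~3.1 and Section~4 of \cite{C17a}, and no further proof is given. Your Maupertuis reduction and the invocation of those two results are in the same spirit.

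There is, however, a genuine error in your high-energy step. For the flat metric $\langle (B'_i)^{-1}d\phi,d\phi\rangle$ on $\mathbb{T}^2$, the shortest closed geodesics in a primitive class $g'$ are \emph{not} unique and are \emph{not} non-degenerate: they form a one-parameter family of parallel straight segments, each of which is degenerate precisely because it sits in a continuous family of equal-length closed curves. So the sentence ``whose unique shortest closed $g'$-geodesic is non-degenerate'' is false, and with it your mechanism for confining the bifurcation set to a compact interval collapses.

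The way this is actually handled in \cite{CZ,C17a} is not by passing to the flat limit, but by resolving the degeneracy of the flat family through the potential: along the one-parameter family of flat $g'$-geodesics one averages $V_i$, obtaining a function on a circle whose non-degenerate global minimum (an open-dense condition on $V_i$) selects, for all sufficiently large $E$, a unique hyperbolic minimizing periodic orbit. That averaging argument is what gives uniform hyperbolicity and uniqueness at high energy, and hence reduces the bifurcation analysis to a compact interval. Replace your flat-limit paragraph with this averaged-potential argument and the rest of your outline goes through.
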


Therefore, for each $V_i\in\mathfrak{V}_i$, there are finitely many bifurcation points $\lambda_0<\lambda'_1<\cdots<\lambda'_m$, the number $m$ is independent of $\epsilon$.  At each bifurcation point $\lambda'_\ell$, there exist exactly two $\lambda'_\ell g'$-minimal orbits of $\bar G'_i$. So, for $\lambda'\ge\lambda_0$, all $\lambda' g'$-minimal orbits make up $m+1$ pieces of NHIC. Let $\bar\Pi'_{i,\ell}$ denote the cylinder made up of $\lambda' g'$-minimal orbits of $\bar G_i$ for all $\lambda'\in[\lambda'_\ell,\lambda'_{\ell+1}]$ for $\ell<m$, let $\bar\Pi'_{i,m}$ be the cylinder made up of $\lambda'g'$-minimal orbits of $\bar G_i$ for all $\lambda'\in[\lambda'_m,\lambda'_{\epsilon}]$ where $\lambda'_\epsilon$ is chosen such that $\max_\theta|(I_1,|\bar k''_i|I_2)(\theta)|=(1-\delta')(\eta T_i)^{-1}\epsilon^{\kappa-\frac 12}$ holds for the $\lambda'_{\epsilon}g'$-minimal orbits $(I(\theta),\phi(\theta))$.

Return back to the coordinates $(x,y)$, each orbit of $\bar G'_i$ is pulled back to the orbit of $\mathscr{T}^*\bar G'_i$ in the way
$$
(x_1,x_2,y_1,y_2)=\Big(\phi_1,\frac 1{|\bar k''_i|}(\phi_2+2\ell\pi),I_1,|\bar k''_i|I_2\Big)
$$
for $\ell=0,1,\cdots,|\bar k''_i|-1$. Consequently, each cylinder $\bar\Pi'_{i,\ell}$ is pulled back to a cylinder $\bar\Pi_{i,\ell}$ of $\bar G_i$, modulo a shift $(x,y)\to (x+(0,2\pi/|\bar k''_i|),y)$. Each cylinder is made up of $\lambda g$-minimal orbits of $\bar G_i$ with $\lambda g=\frac{\lambda'}{|\bar k''_i|}(|\bar k''_i|g_1,g_2)$ if $\lambda'g'=\lambda'(g_1,g_2)$.

Let $E_\ell$ be the energy such that $\lambda_\ell g$-minimal orbit lies in the energy level set $\bar G_i^{-1}(E_\ell)$. Since each $\lambda_\ell g$-minimal orbit is hyperbolic, each cylinder $\bar\Pi_{i,\ell}$ can be extended by the hyperbolic orbits lying in the level set $\bar G_i^{-1}(E)$ with $E\in[E_{\ell+1},E_{\ell+1}+2d]\cup[E_\ell-2d,E_\ell]$. Since there are finitely many bifurcation points, such a number $d>0$ exists.

Under the time-periodic perturbation $G_{\epsilon,i}=\bar G_i+R_{\epsilon,i}$, major part of these cylinders survives,  {\it weakly invariant} for the Hamiltonian flow $\Phi^{\theta}_{G_{\epsilon,i}}$. Notice that $\vartheta=\omega_{i,3}\sqrt{\epsilon}^{-1}\theta$ and $G_{\epsilon,i}$ is $2\pi$ in $x$, $2|\bar k''_i|\pi$ in $\vartheta$ and symmetric for $M_i$, see (\ref{symmetryM}), we introduce a shift
\begin{equation}\label{shift}
\sigma_i: \ (x,y,\theta)\to\Big(x+\Big(0,2\pi\frac{\bar k''_{i,3}}{\bar k''_{i,2}}\Big),y,\theta+\frac{\sqrt{\epsilon}}{\omega_{i,3}}\Big),
\end{equation}
then $\sigma_i^*G_{\epsilon,i}=G_{\epsilon,i}$. Let $\tilde\Pi_{i,E_\ell-d,E_{\ell+1}+d}=(\bar\Pi_{i,\ell} \times\frac{|\bar k''_i|\sqrt{\epsilon}}{\omega_{i,3}}\mathbb{T})\cap\{\bar G_i\in[E_\ell-d,E_{\ell+1}+d]\}$.

\begin{pro}
For sufficiently small $\epsilon$, there is a cylinder $\tilde\Pi^{\epsilon}_{i,E_\ell-d,E_{\ell+1}+d}$ modulo the shift $\sigma_i$, which is weakly invariant and normally hyperbolic for the flow $\Phi_{G_{\epsilon,i}}^{\theta}$. The cylinder lies in a small neighborhood of $\tilde\Pi_{i,E_\ell-d,E_{\ell+1}+d}$.
\end{pro}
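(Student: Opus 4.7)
The plan is to treat this as a persistence-of-NHIM statement. The unperturbed cylinder $\bar\Pi_{i,\ell}$ is a normally hyperbolic invariant manifold for the autonomous flow of $\bar G_i$: each of its orbits is a hyperbolic periodic orbit by Proposition \ref{pro6.1}, and these orbits vary smoothly with $\lambda\in[\lambda_\ell,\lambda_{\ell+1}]$; extending by the hyperbolic continuations into the energy shells $[E_\ell-2d,E_\ell]$ and $[E_{\ell+1},E_{\ell+1}+2d]$ (which exist by the finiteness of bifurcation points and uniform hyperbolicity away from them) yields a compact two-dimensional cylinder with boundary, on which the normal splitting and the exponential rates $(\mu^s,\mu^u)$ are bounded away from $0$ uniformly in $\ell$.

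Next I would suspend this picture in the extended phase space $\mathbb{T}^2\times\Sigma'_{\epsilon,i}\times|\bar k''_i|\mathbb{T}$. For the autonomous flow $\Phi^\theta_{\bar G_i}$ the product $\tilde\Pi_{i,E_\ell-d,E_{\ell+1}+d}=(\bar\Pi_{i,\ell}\cap\{\bar G_i\in[E_\ell-d,E_{\ell+1}+d]\})\times\tfrac{|\bar k''_i|\sqrt\epsilon}{\omega_{i,3}}\mathbb{T}$ is a normally hyperbolic weakly invariant manifold with boundary; weak invariance is automatic because the $\vartheta$-flow is the identity in $\bar G_i$. To pass from weak to genuine invariance for the cut-off argument I introduce a smooth bump function $\chi(\bar G_i)$ supported in $[E_\ell-2d,E_{\ell+1}+2d]$ and equal to $1$ on $[E_\ell-d,E_{\ell+1}+d]$, and replace $R_{\epsilon,i}$ by $\chi R_{\epsilon,i}$. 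The resulting Hamiltonian agrees with $G_{\epsilon,i}$ on $\tilde\Pi_{i,E_\ell-d,E_{\ell+1}+d}$, and for this modified Hamiltonian the cylinder extended a little further is strictly invariant for the unperturbed piece.

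Then I apply the standard NHIM persistence theorem (Fenichel/Hirsch--Pugh--Shub) to the modified, autonomous$+$time-periodic, system. Since $\|R_{\epsilon,i}\|_{C^{r-2}}\le a_0\epsilon^\kappa$ by Lemma \ref{normalf} and the normal hyperbolicity rates are $O(1)$, for $\epsilon$ small the smallness condition $\|\chi R_{\epsilon,i}\|_{C^1}\ll\min\{\mu^s,\mu^u\}$ holds, so there exists a $C^{r-3}$ invariant cylinder $\tilde\Pi^\epsilon$ that is $O(\epsilon^\kappa)$-close to $\tilde\Pi_{i,E_\ell-d,E_{\ell+1}+d}$ and carries the same normal splitting. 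Outside the support of $\chi$ the modification is trivial, so restricted to $\{\bar G_i\in[E_\ell-d,E_{\ell+1}+d]\}$ this cylinder is weakly invariant for the original flow $\Phi^\theta_{G_{\epsilon,i}}$; this restriction is the desired $\tilde\Pi^\epsilon_{i,E_\ell-d,E_{\ell+1}+d}$.

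Finally, the shift $\sigma_i$ in (\ref{shift}) is a symplectic symmetry of $G_{\epsilon,i}$ by (\ref{symmetryM}), and it maps $\tilde\Pi_{i,E_\ell-d,E_{\ell+1}+d}$ to itself up to the identification in $x_2$ and $\theta$; by uniqueness of the persisting NHIM within an $O(\epsilon^\kappa)$-tube, the construction can be carried out $\sigma_i$-equivariantly, so $\tilde\Pi^\epsilon_{i,E_\ell-d,E_{\ell+1}+d}$ is well defined modulo $\sigma_i$. The main obstacle in this outline is verifying that the normal hyperbolicity rates do not degenerate near the bifurcation energies $E_\ell,E_{\ell+1}$: the cylinder is trimmed precisely so that the hyperbolic continuation survives a uniform distance $2d$ past the bifurcation, and this uniformity uses the finiteness of the bifurcation set in Proposition \ref{pro6.1} together with the openness of the hyperbolicity condition. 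Once this uniform lower bound on the rates is in hand, the persistence theorem applies verbatim.
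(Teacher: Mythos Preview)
Your proposal is correct and follows essentially the same approach as the paper: a cut-off of the perturbation $R_{\epsilon,i}$ as a function of $\bar G_i$ on the collar $[E_\ell-2d,E_{\ell+1}+2d]\setminus[E_\ell-d,E_{\ell+1}+d]$, application of the NHIM persistence theorem to the modified Hamiltonian, then restriction to recover weak invariance for $G_{\epsilon,i}$ and use of the symmetry (\ref{symmetryM}) for the $\sigma_i$-equivariance. The paper's version (\ref{modification}) uses two separate cut-offs $\rho_1,\rho_2$ rather than a single bump $\chi$, but this is only a cosmetic difference.
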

\begin{proof}
We modify the Hamiltonian $G_{\epsilon,i}$. Let $\rho$ be a $C^2$-function such that $\rho(\mu)=1$ for $\mu\ge 1$ and $\rho(\mu)=0$ for $\mu\le 0$. By defining $\rho_1(x,y)=\rho((\bar G_i(x,y)-E^-_\ell+2d)/d)$, $\rho_2(x,y)=1-\rho((\bar G_i(x,y)-E_{\ell+1}-d)/d)$
we introduce
\begin{equation}\label{modification}
G'_{\epsilon,i}=\begin{cases}
\bar G_i+\rho_1R_{\epsilon,i}, &\mathrm{if}\  \bar G_i(x,y)\in[E^-_\ell-2d,E_\ell-d],\\
\bar G_i+\rho_2R_{\epsilon,i},&\mathrm{if}\  \bar G_i(x,y)\in[E_{\ell+1}+d,E_{\ell+1}+2d],\\
G_{\epsilon,i}, &\mathrm{elsewhere}.
\end{cases}
\end{equation}
Because $\|G'_{\epsilon,i}-\bar G_i\|_{C^2}\ll1$ if $|y|\le O(\epsilon^{\kappa-\frac 12})$ and $\epsilon\ll1$, it follows that the NHIC $\tilde\Pi_{i,\ell}$ survives the perturbation $\bar G_i\to G'_{\epsilon,i}$, denoted by $\tilde\Pi^{\epsilon}_{i,\ell}$.

Let $\tilde\Pi^{\epsilon}_{i,E_\ell-d,E_{\ell+1}+d}=\tilde\Pi_{i,\ell}^{\epsilon}\cap\{\bar G_i\in[E_\ell-d,E_{\ell+1}+d]\}$, which is a weakly invariant for $\Phi_{G_{\epsilon,i}}^{\theta}$ because $G_{\epsilon,i}=G'_{\epsilon,i}$ when they are restricted in the region $\{(x,y):\bar G_i(x,y)\in[E_\ell-d,E_{\ell+1}+d]\}\times\frac{|\bar k''_i|\sqrt{\epsilon}}{\omega_{i,3}}\mathbb{T}$. The normal hyperbolicity is obvious.

Due to the symmetry (\ref{symmetryM}), the Hamiltonian vector field of $\Phi^{\theta}_{G_{\epsilon,i}}$ is invariant under the shift $\sigma_i$. It guarantees the invariance of $\tilde\Pi^{\epsilon}_{i,E_\ell-d,E_{\ell+1}+d}$ under the shift $\sigma_i$.
\end{proof}

Notice that $\lambda_{\epsilon}>0$ is set such that $\max_\theta|y(\theta)|=(1-\delta')(\eta T_i)^{-1}\epsilon^{\kappa-\frac 12}$ holds for the $\lambda_{\epsilon}g$-minimal orbits $(y(\theta),x(\theta))$. By applying Theorem 1.2 of \cite{C17b}, we have

\begin{theo}\label{C17aTh}
Given a class $g\in H_1(\mathbb{T}^2,\mathbb{R})$ and small $E_0>0$, there is an open-dense set $\mathfrak{V}_i\subset C^{r}(\mathbb{T}^2,\mathbb{R})$ $(r\ge 5)$. For each $V_i\in\mathfrak{V}_i$, there exists $\epsilon_0>0$ such that for each $\epsilon\in(0,\epsilon_0)$

$1)$ there are finitely many NHWICs for the flow $G_{\epsilon,i}$:
$\tilde\Pi^{\epsilon}_{i,E_\ell-d,E_{\ell+1}+d}$ $(\ell=0,\cdots,m)$ and $\tilde\Pi^{\epsilon}_{i,E_m-d,E_{\epsilon}}$, modulo the shift $\sigma_i$, where the integer $m$, the numbers $E_1<E_2<\cdots<E_m$, $d>0$ and the normal hyperbolicity of each cylinder are all independent of $\epsilon$, $E_{\epsilon}=\bar\alpha_i(\lambda_{\epsilon}g)=O(\epsilon^{2\kappa-1})$ for small $\epsilon>0$;

$2)$ some $E_{\ell,\epsilon}\to E_\ell$ as $\epsilon\to 0$ exists such that for each $c\in\mathbb{C}_{i,g}$
\begin{enumerate}
  \item if $\alpha_{\epsilon,i}(c)\in (E_{\ell,\epsilon}, E_{\ell+1,\epsilon})$, the Aubry set lies on $\tilde\Pi^{\epsilon}_{i,E_\ell-d,E_{\ell+1}+d}$ modulo $\sigma_i$;
  \item if $\alpha_{\epsilon,i}(c)=E_{\ell,\epsilon}$, the Aubry set contains at least two connected components, one is on $\tilde\Pi^{\epsilon}_{i,E_{\ell-1}-d,E_{\ell}+d}$ and the other one is on $\tilde\Pi^{\epsilon}_{i,E_{\ell}-d,E_{\ell+1}+d}$ modulo $\sigma_i$;
  \item if $\alpha_{G_{\epsilon}}(c)\in (E_{m,\epsilon},E_{\epsilon})$, the Aubry set lies on $\tilde\Pi_{E_{m}-d,E_{\epsilon}}$ modulo $\sigma_i$.
\end{enumerate}
\end{theo}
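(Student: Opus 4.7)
The plan is to apply Theorem~1.2 of \cite{C17b} in the conjugate coordinates $(\phi,I)$ to the truncated autonomous Hamiltonian $\bar G'_i$ of (\ref{averaged}), and then pull back the conclusions to $(x,y,\theta)$ via $\mathscr{T}^{-1}$ together with the shift $\sigma_i$ of (\ref{shift}). Bernard's invariance theorem (Theorem~\ref{ThmBernard}) guarantees that Mather, Aubry and Ma\~n\'e sets transport correctly under these symplectic maps, so the whole problem reduces to the two-degree-of-freedom autonomous setting, viewed as a small time-periodic perturbation of size $O(\epsilon^{\kappa})$.

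First I would fix the open-dense set $\mathfrak{V}_i\subset C^r(\mathbb{T}^2,\mathbb{R})$ by intersecting the sets supplied by Proposition~\ref{pro6.1} and Theorem~\ref{C17bTh}, applied to the homology class $g'$ that matches the original $g$ after conjugation by $\mathscr{T}$. For $V_i\in\mathfrak{V}_i$ this yields bifurcation parameters $\lambda_0<\lambda'_1<\cdots<\lambda'_m$, finitely many with $m$ independent of $\epsilon$, and corresponding energies $E_1<\cdots<E_m$. Between consecutive bifurcations the $\lambda g$-minimal orbits form a NHIC of the autonomous $\bar G_i$; I extend each piece by the hyperbolic periodic orbits in the bands $[E_\ell-2d,E_\ell]$ and $[E_{\ell+1},E_{\ell+1}+2d]$, with $d>0$ smaller than half the minimum bifurcation gap. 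Persistence under the time-periodic perturbation $\bar G_i\to G_{\epsilon,i}$ follows from the cut-off construction of (\ref{modification}) in the preceding proposition: on each energy strip the modified $G'_{\epsilon,i}$ coincides with $G_{\epsilon,i}$ and differs from $\bar G_i$ by $O(\epsilon^\kappa)$ in $C^2$, so standard NHIM theory delivers the weakly invariant, normally hyperbolic cylinders $\tilde\Pi^\epsilon_{i,E_\ell-d,E_{\ell+1}+d}$ with constants independent of $\epsilon$. Invariance under $\sigma_i$ is inherited from the symmetry of the vector field, as already noted. The top piece $\tilde\Pi^\epsilon_{i,E_m-d,E_\epsilon}$ is built in the same way with a one-sided cut-off; here $E_\epsilon=\bar\alpha_i(\lambda_\epsilon g)=O(\epsilon^{2\kappa-1})$ arises because the minimizer of $\bar G_i$ reaches $|y|\sim (\eta T_i)^{-1}\epsilon^{\kappa-1/2}$ and $\bar\alpha_i$ grows quadratically by Legendre duality.

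For the Aubry set location I would read off the structure directly from Theorem~1.2 of \cite{C17b} applied to $\bar G'_i$: for $c\in\mathbb{C}_{i,g}$ with $\alpha$-value in the interior of an interval between bifurcations the Aubry set lies in a single component of $\bar\Pi_{i,\ell}$, while at the bifurcation energy $E_\ell$ it splits into two connected pieces, one on each adjacent cylinder. Upper semi-continuity of the Ma\~n\'e set (Lemma~2.3 of \cite{CY2}) together with $\|G_{\epsilon,i}-\bar G_i\|_{C^2}=O(\epsilon^\kappa)$ forces the Aubry set of $G_{\epsilon,i}$ to concentrate on the corresponding NHWIC. I then define $E_{\ell,\epsilon}$ as the $\alpha_{\epsilon,i}$-value at which the Aubry set first meets two adjacent cylinders; continuity of the $\alpha$-function in $\epsilon$ (with $\min\alpha_{\epsilon,i}=0$ normalised) yields $E_{\ell,\epsilon}\to E_\ell$ as $\epsilon\to 0$.

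The main obstacle is the top stratum $[\lambda'_m,\lambda_\epsilon]$: as $\epsilon\to 0$, $\lambda_\epsilon\to\infty$ and the relevant periodic minimizers live on a region of size $\sim\epsilon^{\kappa-1/2}$. One must verify (i) that the $C^{r-2}$ bound $\|R_{\epsilon,i}\|\le a_0\epsilon^\kappa$ from Lemma~\ref{normalf} remains small relative to the normal hyperbolicity constant of $\bar\Pi_{i,m}$ uniformly on this enlarged region, and (ii) that no further bifurcations accumulate as $\lambda\to\infty$, which requires the generic non-degeneracy of $V_i$ provided by $\mathfrak{V}_i$ to rule out tangencies between competing minimal orbits. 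Both points are precisely the quantitative content of Theorem~1.2 of \cite{C17b}, which is invoked as a black box; without its uniform estimates the integer $m$ and the constant $d$ could a priori depend on $\epsilon$, and the characterization of the Aubry set in part~(2) would fail near the boundary of the normal-form domain.
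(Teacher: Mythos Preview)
Your proposal is correct and follows essentially the same approach as the paper. The paper itself presents Theorem~\ref{C17aTh} as a direct application of Theorem~1.2 of \cite{C17b}, with the NHWIC persistence supplied by the cut-off construction in the immediately preceding proposition and the bifurcation structure by Proposition~\ref{pro6.1}; your write-up spells out these ingredients (the $\mathscr{T}$-conjugation to the standard torus, the $\sigma_i$-symmetry, upper semi-continuity for the Aubry set location, and the uniform-in-$\epsilon$ issues at the top stratum) in more detail than the paper does, but the route is the same.
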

\noindent{\bf Remark}. Because of the symmetry $M_i$, all shifts of the cylinder $\tilde\Pi^{\epsilon}_{i,E_\ell-d,E_{\ell+1}+d}$ are projected to the same cylinder, if we pull them back to the space of $(p,q)$.

At a strong double resonant point $p''_i,\ i\in\Lambda_s$, we consider two classes $g^-_i,g^+_i$ which determines two channels $\mathbb{C}_i^\pm=\mathbb{C}_{i,g^\pm}$. The choice of $g^-,g^+$ depends on where $p''_i$ is.
There are three possible locations of a strong double resonant point $p''_i$ along $\Gamma_{k^*}\cup \Gamma_{k^\star}$:
\begin{enumerate}
  \item the point $p''_i$ is on the path $\Gamma_{k^{\star}}$ where $\Gamma_{k^{\star}}$ does not intersect with $\Gamma_{k^*}$;
  \item the point $p''_i$ is on the path $\Gamma_{k^*}$ where $\Gamma_{k^*}$ does not intersect with $\Gamma_{k^{\star}}$;
  \item $p''_i\in\Gamma_{k^{\star}}\cap\Gamma_{k^*}$.
\end{enumerate}
After the linear coordinate transformations $\mathscr{M}_0$ and $\mathscr{M}_i$ defined in (\ref{xuanzuan}), we have that the first two components of the frequency $\partial h(p)$, i.e. the frequency of the system $G_{\epsilon,i}$,  is proportional to $(0,1)$ along $\Gamma_k,\ k=k^*,k^\star$ in the first two cases. In the third case, $\partial h(p)$ is proportional to $(0,1)$ along $\Gamma_{k^{\star}}$ and to $(1,0)$ along $\Gamma_{k^*}$.

At a double resonance $p_i''$, the first two components of the frequency $\partial h(p''_i)$ vanish after the linear transforms. So by {\it crossing a strong double resonance},  we mean that there exists an orbit of the system $G_{\epsilon,i}$, such that along the orbit, its ``frequency" changes from being in the set $\{(0,\nu),\ \nu>0\}$ to the set $\{(0,\nu),\ \nu<0\}$ in the first two cases, and changes
from being in the set $\{(0,\nu),\ \nu>0\}$ to the set $\{(\nu,0),\ \nu>0\}$ in the third case. Since $G_{\epsilon, i}$ is not nearly-integrable, here the role of ``frequency" of the system $G_{\epsilon,i}$ will be played by the rotation vector of the Mather sets shadowed by the orbit.

We introduce the homology classes $g_i^\pm\in H_1(\T^2,\Z)$ as follows:
\begin{enumerate}
  \item
  $g_i^+=(0,1)$ and $g_i^-=(0,-1)$ in the first two cases,
  \item $g_i^+=(0,1)$ and $g_i^-=(1,0)$ in the third case.
\end{enumerate}

By adding a constant to $G_{\epsilon,i}$ we assume $\min\alpha_{\epsilon,i}=0$. Applying Theorem \ref{C17bTh} and \ref{C17aTh} we obtain the following result for the dynamics around the strong double resonance.
\begin{theo}\label{localchainS}
There exists an open-dense set $\mathfrak{V}_i\subset C^{r}(\mathbb{T}^2,\mathbb{R})$ $(r\ge 5)$ such that for each $V_i\in\mathfrak{V}_i$, there exist $0<E_{i,0}<\Delta_{V_i}$, $\Delta_i'>0$ and $\epsilon_i>0$ such that $\forall$ $\epsilon\in (0,\epsilon_i]$ the following holds.

$1)$ there is an annulus $\mathbb{A}_{i}$ around a double resonant point $p''_i$, made up of the circles $\{c\in\alpha_{\epsilon,i}^{-1}(E):E\in(0,\Delta_{V_i})\}$, each of which is a path of cohomology equivalence;

$2)$ the following two channels are connected by the annulus $\mathbb{A}_{i}$
\begin{equation*}\label{strongC}
\mathbb{C}_i^{\pm}=\cup_{\lambda\in[\lambda_i^{\pm},\bar\lambda_i^{\pm}]}\mathscr{L}_{\alpha_{\epsilon,i}}(\lambda g_i^{\pm}), \qquad\ g_i^{\pm}\in H_1(\mathbb{T}^2,\mathbb{Z}),
\end{equation*}
where $\bar\lambda_i^{\pm},\lambda_i^{\pm}>0$ satisfy $\alpha_{\epsilon,i}(\mathscr{L}_{\alpha_{\epsilon,i}}(\lambda_i^\pm g_i^{\pm}))=E_{i,0}<\Delta_{V_i}$ and
\begin{enumerate}
\item for $c\in\mathscr{L}_{\alpha_{\epsilon,i}}(\bar\lambda_i^\pm g_i^{\pm})$ it holds for every orbit $\{(x(\theta),y(\theta)),\ \theta\in \mathbb R\}$ in $\tilde{\mathcal{A}}(c)$ that
    $$
    (1-\delta')(\eta T_i)^{-1} \epsilon^{\kappa-\frac 12} -\Delta'_i\le|y(\theta)|\le(1-\delta')(\eta T_i)^{-1}\epsilon^{\kappa-\frac 12};
    $$
\item for each $c\in\mathbb{C}_i^{\pm}$ except for finitely many classes $\{c_j\}$, the Aubry set $\tilde{\mathcal{A}}(c)$ lies in certain cylinder $\tilde\Pi^{\epsilon}_{i,E_\ell-d,E_{\ell+1}+d}$, modulo the shift $\sigma_i$, while $\tilde{\mathcal{A}}(c_j)$ lies in two of the cylinders. For each orbit $\{(x(\theta),y(\theta)),\ \theta\in\mathbb\R\}$ in the Aubry set one has
    $$
    \mathrm{Osc}|y(\theta)|=\max|y(\theta)-y(\theta')|\le \Delta'_i.
    $$
\end{enumerate}
\end{theo}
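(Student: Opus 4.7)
The plan is to take $\mathfrak{V}_i$ as the intersection of the open-dense sets supplied by Theorem \ref{C17bTh}, and by Theorem \ref{C17aTh} applied twice, once with $g = g_i^+$ and once with $g = g_i^-$. Finite intersections of open-dense subsets of $C^r(\mathbb{T}^2,\mathbb{R})$ remain open-dense, so $\mathfrak{V}_i$ inherits this property. For any $V_i \in \mathfrak{V}_i$, let $\Delta_{V_i}$ and $\epsilon_i$ be the smaller of the constants produced by these two applications, and choose $E_{i,0} \in (0,\Delta_{V_i})$ strictly smaller than $\Delta_{V_i}$, independent of $\epsilon$.

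Part $1)$ follows directly from Theorem \ref{C17bTh}: for $\epsilon \in (0,\epsilon_i]$, each circle $\alpha_{\epsilon,i}^{-1}(E)$ with $E \in (0,\Delta_{V_i})$ is a path of cohomology equivalence, and the union of such circles is the desired annulus $\mathbb{A}_i$ surrounding the flat $\mathbb{F}_i$. For part $2)$, apply Theorem \ref{C17aTh} with $g = g_i^\pm$ to extract the finite family of NHWICs $\tilde\Pi^{\epsilon}_{i,E_\ell-d,E_{\ell+1}+d}$, $\tilde\Pi^{\epsilon}_{i,E_m-d,E_\epsilon}$ and the bifurcation values $E_{1,\epsilon},\dots,E_{m,\epsilon}$. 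Define $\lambda_i^\pm$ by the relation $\alpha_{\epsilon,i}(\mathscr{L}_{\alpha_{\epsilon,i}}(\lambda_i^\pm g_i^\pm)) = E_{i,0}$; since $E_{i,0} < \Delta_{V_i}$, the class $\mathscr{L}_{\alpha_{\epsilon,i}}(\lambda_i^\pm g_i^\pm)$ lies on the inner boundary of the annulus $\mathbb{A}_i$, so each channel $\mathbb{C}_i^\pm$ is indeed joined to $\mathbb{A}_i$. Set $\bar\lambda_i^\pm := \lambda_\epsilon$, the outer parameter singled out in the statement of Theorem \ref{C17aTh}, which corresponds to minimal periodic orbits whose $y$-maximum equals $(1-\delta')(\eta T_i)^{-1}\epsilon^{\kappa-1/2}$. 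Conclusion $(1)$ of part 2 then is the tautology that the outermost Aubry set sits on the outermost cylinder $\tilde\Pi^{\epsilon}_{i,E_m-d,E_\epsilon}$, and $(2)$ is a verbatim restatement of item $2)$ of Theorem \ref{C17aTh}.

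The genuine content that remains is the quantitative $y$-bounds: the lower inequality $(1-\delta')(\eta T_i)^{-1}\epsilon^{\kappa-1/2} - \Delta_i' \le |y(\theta)|$ and the oscillation estimate $\mathrm{Osc}|y(\theta)|\le\Delta_i'$. For the unperturbed Hamiltonian $\bar G_i$, a $\lambda g_i^\pm$-minimal orbit is a closed orbit on the energy level $\bar G_i = \bar\alpha_i(\lambda g_i^\pm)$, and the oscillation of $|y|$ along such an orbit is controlled by $\max V_i - \min V_i$ divided by the smallest eigenvalue of $B_i$, uniformly in $\lambda$ on any bounded-away-from-bifurcation interval. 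Hence there is a constant $\Delta_i'$, independent of $\epsilon$, bounding the $y$-oscillation of the unperturbed orbits lying in any $\bar\Pi_{i,\ell}$. Under the perturbation $\bar G_i \to G_{\epsilon,i} = \bar G_i + R_{\epsilon,i}$ with $\|R_{\epsilon,i}\|_{C^{r-2}} \le a_0\epsilon^\kappa$, the NHWIC and the orbits on it are $O(\epsilon^\kappa)$-close to the unperturbed ones, so the same bound $\Delta_i'$ (possibly enlarged by a fixed factor) persists, which yields both required inequalities.

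The main obstacle I foresee is the uniformity of the bound $\Delta_i'$ across the entire channel $\mathbb{C}_i^\pm$, especially near the bifurcation points $\{c_j\}$ where two minimal periodic orbits coalesce or separate: there the parametrization of the cylinder by $\lambda$ is non-smooth and a naive implicit function argument fails. This is precisely what the technical estimates of Section 7 (deviation of rotation vectors, location of the flat, and the orbit estimates for Aubry sets) are designed to handle, and I would invoke them to patch the argument across bifurcations. The uniformity in the second resonance period $T_i$, encoded through the factor $\eta T_i$ in the $y$-bound, is handled by the observation (from Lemma \ref{normalf}) that all constants $a_\ell$ in the normal form are independent of $T_i$, together with the symmetry $\sigma_i$ which reduces the analysis to a single fundamental domain.
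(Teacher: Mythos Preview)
Your overall architecture is right and matches the paper: the open-dense set is the intersection of those from Theorem~\ref{C17bTh} and from Theorem~\ref{C17aTh} (applied to $g_i^+$ and $g_i^-$), part~1) is Theorem~\ref{C17bTh} verbatim, the cylinder structure in part~2) is Theorem~\ref{C17aTh}, and the only new content is the oscillation bound on $y$.

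Your ``main obstacle'' paragraph, however, is a misdiagnosis. The oscillation estimate for the unperturbed system $\bar G_i$ has nothing to do with the cylinder parametrisation or with bifurcation points, and Section~7 is irrelevant here (those lemmas concern the full Hamiltonian $H$ in the original $(p,q)$-coordinates, not the normal form $G_{\epsilon,i}$). The bound is a direct consequence of energy conservation, uniform over \emph{all} $\lambda$, via a two-regime argument: for high energy the period of the $\lambda g$-minimal orbit is at most $1$, so integrating $\dot y=\partial V_i$ gives $|\bar y(\theta)-\bar y(\theta')|\le\max\{|\partial_{x_1}V_i|,\,|\bar k''_i||\partial_{x_2}V_i|\}$; for low energy one uses the crude bound $\|y\|\le m_i^{-1}\sqrt{E+V_i(x)}$ coming from $\bar G_i(x,y)=E$. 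This yields
\[
\Delta'_i=\max\Big\{2\max_x\{|\partial_{x_1}V_i|,\,|\bar k''_i||\partial_{x_2}V_i|\},\ \frac{4}{m_i}\sqrt{E_1+\max_xV_i}\Big\},
\]
with no caveat about bifurcations.

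Two smaller points where the paper is more careful than your sketch. First, passing from $\bar G_i$ to $G_{\epsilon,i}$ is not just NHIC persistence: one needs that the perturbed Aubry set, which lies on the perturbed cylinder, stays within $O(\epsilon^{\frac13\kappa})$ in energy of the unperturbed one; the paper invokes Proposition~5.2 of \cite{C17a} for this. Second, $\bar\lambda_i^\pm$ is not set equal to $\lambda_\epsilon$ directly; rather one chooses it so that the \emph{unperturbed} $\bar\lambda_i^\pm g_i^\pm$-minimal orbit has $\max_\theta|\bar y(\theta)|=(1-\delta')(\eta T_i)^{-1}\epsilon^{\kappa-\frac12}-\tfrac14\Delta'_i$, leaving room for the $O(\epsilon^{\frac13\kappa})$ drift so that the perturbed orbit satisfies the two-sided inequality in item~(1).
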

\begin{proof}
We only need to verify the estimate on the oscillation of $y(\theta)$ if $(x(\theta),y(\theta))$ is an orbit in the Aubry sets. Let us consider the problem for $\bar G_i$ first.

We claim that some constant $\Delta'_i>0$ exists such that it holds along any $\lambda g$-minimal orbit $(\bar y(\theta),\bar x(\theta))$ of $\bar G_i$ that
\begin{equation}\label{fluctuation}
|\bar y(\theta)-\bar y(\theta')|\le\frac 12\Delta'_i.
\end{equation}
Indeed, each $\lambda g$-minimal orbit is entirely contained in certain energy level set $\bar G_i^{-1}(E)$. The higher the energy increases, the shorter the period becomes. Let $E_1$ be the energy that the set $\bar G_i^{-1}(E_1)$ contains $\lambda g$-minimal orbit with period 1. One obtains from the equation $\dot y=\partial V_i(x)$ that $|\bar y(\theta)-\bar y(\theta')|\le\max_{x\in\mathbb{T}^2}\{|\partial_{x_1}V_i(\bar x(\theta))|,|\bar k''_i||\partial_{x_2} V_i(\bar x(\theta))|\}$ if $(x(\theta),y(\theta))\in\bar G_i^{-1}(E)$ with $E\ge E_1$. Let $m_i$ be the smallest eigenvalue of $B_i$, one has $\|y\|\le \frac 1{m_i}\sqrt{E+V_i(x)}$ if $(x,y)\in \bar G_i(E)$. Therefore, the estimate (\ref{fluctuation}) holds if we set
\begin{equation}\label{6.7}
\Delta'_i=\max\Big\{2\max_{x\in\mathbb{T}^2}\{|\partial_{x_1}V_i(\bar x(\theta))|,|\bar k''_i||\partial_{x_2} V_i(\bar x(\theta))|\},\frac 4{m_i}\sqrt{E_1+\max_xV_i(x)}\Big\}.
\end{equation}

Next, we consider $G_{\epsilon,i}$ which is a $O(\epsilon^{\kappa})$-perturbation of $\bar G_i$. Each $\theta$-section of NHIC of $G_{\epsilon,i}$ is located in $O(\epsilon^{\kappa})$-neighborhood of NHIC of $\bar G_i$. By Proposition 5.2 of \cite{C17a}, the Aubry set $\tilde{\mathcal{A}}(c)$ does not hit the level set $G_{\epsilon,i}^{-1}(E\pm \epsilon^{\frac 13\kappa})$ if $c\in\alpha^{-1}_{\epsilon,i}(E)\cap \cup_\lambda\mathscr{L}_{\alpha_{\epsilon,i}}(\lambda g)$ and $E-\min\alpha_{\epsilon,i}\ge O(1)$. Because the Aubry set $\tilde{\mathcal{A}}(c)$ for $G_{\epsilon,i}$ stays in the cylinder, it falls into $O(\epsilon^{\frac 13\kappa})$-neighborhood of the Aubry set for $\bar G_i$. 
For any $y(\theta)$ there exists $\bar y(\theta^*)$ such that $|y(\theta)-\bar y(\theta^*)|<\frac 14\Delta'_i$ provided $\epsilon>0$ is suitably small and it holds for any orbit $(x(\theta),y(\theta))\in\tilde{\mathcal{A}}(c)$ with $c\in\mathbb{C}_i^{\pm}$ that
\begin{equation*}
\mathrm{Osc}|y(\theta)|=\max_{\theta,\theta'}|y(\theta)-y(\theta')|\le\max_{\theta,\theta'}|\bar y(\theta)-\bar y(\theta')|+2O(\epsilon^{\frac 13\kappa})\le\frac34 \Delta'_i.
\end{equation*}
If we set $\bar\lambda^\pm_i$ such that $\max_\theta|\bar y(\theta)|=(1-\delta')(\eta T_i)^{-1}\epsilon^{\kappa-\frac 12}-\frac 14\Delta'_i$ holds along $\bar\lambda^\pm_ig_i^{\pm}$-minimal orbit of $\bar G_i$, one obtains that
$$
(1-\delta')(\eta T_i)^{-1}\epsilon^{\kappa-\frac 12}-\Delta'_i\le |y(\theta)|\le (1-\delta')(\eta T_i)^{-1}\epsilon^{\kappa-\frac 12}
$$
if $(x(\theta),y(\theta))$ lies in $\tilde{\mathcal{A}}(c)$ with $c\in\mathscr{L}_{\alpha_{\epsilon,i}}(\bar\lambda_i^\pm g_i^{\pm})$. 
\end{proof}

For weak double resonant point $p''_i$, the truncated part of $G_{\epsilon,i}$ is
\begin{equation}\label{weakterm}
\bar G_i=\frac 12\langle B_iy,y\rangle-V'_i(x_1)-V''_i(x_1,|\bar k''_i|x_2),
\end{equation}
where $V'_i(x_1)=-Z_{k'}(p''_i,x_1)$, $V''_i(x_1,|\bar k''_i|x_2)=-Z_{k',k''_i}(p''_i,x_1,|\bar k''_i|x_2)$ and the term $V''_i$ is treated as a small perturbation.
\begin{theo}\label{localchainw}
There exists an open-dense set $\mathfrak{V}_{k'}\subset C^{r}(B_d\times\mathbb{T},\mathbb{R})$ $(r\ge 5)$, for each $Z_{k'}\in\mathfrak{V}_{k'}$, there exist $\Delta'_i>0$ and $\epsilon_{k'}>0$ which are independent of $k''_i$ such that for each $\epsilon\in (0,\epsilon_{k'}]$ and each $i\in\Lambda_{\epsilon}\backslash\Lambda_s$ there is a channel
\begin{equation}\label{weakC}
\mathbb{C}^w_i=\cup_{\lambda\in[-\lambda_i,\bar\lambda_i]}\mathscr{L}_{\alpha_{\epsilon,i}}(\lambda g)\subset H^1(\mathbb T^2,\mathbb R),\qquad g=(0,1)
\end{equation}
with the properties that
\begin{enumerate}
  \item for each $c\in\mathbb{C}^w_i$, the Aubry set $\tilde{\mathcal{A}}(c)$ lies on some NHWIC entirely contained in the region $\Sigma'_{\epsilon,i}$, modulo the shift $\sigma_i$. Along each orbit $\{(x(\theta),y(\theta)),\ \theta\in \R\}$ in the Aubry set one has
      $$
      \mathrm{Osc}|y(\theta)|=\max_{\theta,\theta'}|y(\theta)-y(\theta')|\le \Delta'_i,
      $$
  \item the numbers $\lambda_i$ and $\bar\lambda_i>0$ are chosen such that
  for $c\in\mathscr{L}_{\alpha_{\epsilon,i}}(\bar\lambda_i g)\cup \mathscr{L}_{\alpha_{\epsilon,i}}(-\lambda_i g)$ it holds for $c$-minimal orbit $\{(x(\theta),y(\theta)),\ \theta\in \R\}$ that
      $$
      (1-\delta')(\eta T_i)^{-1}\epsilon^{\kappa-\frac 12} -\Delta'_i\le|y(\theta)|\le(1-\delta')(\eta T_i)^{-1}\epsilon^{\kappa-\frac 12}.
      $$
\end{enumerate}
\end{theo}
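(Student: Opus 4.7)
The plan is to reduce the weak resonance case to a setting close to \emph{a priori} unstable. By the definition of ``weak'', the term $V''_i$ is small (coming from the Fourier decay of $P$ with large $\|k''_i\|$), so the further-truncated Hamiltonian $\bar G_i^0=\tfrac12\langle B_iy,y\rangle-V'_i(x_1)$ has a clean integrable structure: since $V'_i$ depends only on $x_1$, the variable $y_2$ is conserved and, for each fixed $y_2$, the $(x_1,y_1)$-subsystem is a one-dimensional mechanical pendulum. The argument then parallels that of Theorem \ref{localchainS} but is considerably simpler, since no non-trivial bifurcation appears.

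First, I would apply Theorem \ref{ThmCZ} to the family $Z_{k'}(p,\cdot)\in C^r(\T,\R)$ parametrized by $p\in\Gamma_{k'}$, using the decomposition $P=Z_{k'}+P'$ as in the proof of Proposition \ref{pro}. This yields an open-dense set $\mathfrak{V}_{k'}\subset C^{r}(B_D\times\T,\R)$ such that, for every $Z_{k'}\in\mathfrak{V}_{k'}$, the maximum $x_1^*(p)$ of $V'_i=-Z_{k'}(p,\cdot)$ is non-degenerate uniformly in $p\in\Gamma_{k'}$, with $\partial^2_{x_1}V'_i(x_1^*)\le -d$ for some $d>0$ independent of $i$. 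The hyperbolic equilibrium at $(x_1^*,-(B_i)_{12}y_2/(B_i)_{11})$ of the reduced pendulum depends smoothly on $y_2$, yielding a 2-dimensional NHIC $\Pi^0_i$ of $\bar G_i^0$ parametrized by $(x_2,y_2)$ with normal hyperbolicity constants bounded uniformly in $k''_i$ and $\epsilon$. Applying the NHIM theorem to a cut-off localization of $G_{\epsilon,i}=\bar G_i^0-V''_i+R_{\epsilon,i}$ (exactly as in the NHWIC construction of Section 5), with $\|V''_i\|_{C^{r-2}}$ small by the definition of weak and $\|R_{\epsilon,i}\|_{C^{r-2}}\le a_0\epsilon^\kappa$ by Lemma \ref{normalf}, the cylinder persists as a NHWIC $\Pi^\epsilon_i$ of $G_{\epsilon,i}$ lying in an $O(\epsilon^\kappa+\|V''_i\|_{C^2})$-neighborhood of $\Pi^0_i$ and invariant under the shift $\sigma_i$.

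For $c\in\mathbb{C}^w_i$ with dual rotation vector $\lambda g$, $g=(0,1)$, the restricted dynamics of $\bar G_i^0$ on $\Pi^0_i$ is generated by a quadratic form in $y_2$, and each $\lambda g$-minimizer of $\bar G_i^0$ is a flat circle $\{y_2=\lambda\}\subset\Pi^0_i$ along which $y$ is constant. Combining the upper semi-continuity of the Ma\~n\'e set (Lemma 2.3 of \cite{CY2}) with the energy-barrier argument of Proposition 5.2 of \cite{C17a} to rule out escape from a neighborhood of $\Pi^\epsilon_i$, the Aubry set of $G_{\epsilon,i}$ for each $c\in\mathbb{C}^w_i$ is entirely contained in $\Pi^\epsilon_i$ modulo $\sigma_i$. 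The oscillation bound $\mathrm{Osc}|y(\theta)|\le\Delta'_i$ follows because $y$ is constant on $\Pi^0_i$ and $\Pi^\epsilon_i$ is uniformly close to $\Pi^0_i$, with $\Delta'_i$ depending only on $h$, $E$, $k'$ and a uniform bound on $P$, not on $k''_i$. The endpoint estimate for $c\in\mathscr{L}_{\alpha_{\epsilon,i}}(\bar\lambda_i g)\cup\mathscr{L}_{\alpha_{\epsilon,i}}(-\lambda_i g)$ is then obtained by choosing $\bar\lambda_i,\lambda_i>0$ so that the unperturbed minimizer on $\Pi^0_i$ achieves $|y|=(1-\delta')(\eta T_i)^{-1}\epsilon^{\kappa-\frac12}-\Delta'_i/2$.

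The principal obstacle is the uniform localization of the Aubry sets throughout the whole channel, especially for extremal $c$: one must exclude that the $c$-minimizer leaves a neighborhood of $\Pi^\epsilon_i$. This is handled by the energy-barrier mechanism of Proposition 5.2 of \cite{C17a}, which requires the normal hyperbolicity constants of $\Pi^0_i$ and the size of the perturbation $V''_i+R_{\epsilon,i}$ to be controlled uniformly across all weak resonant points in $\Lambda_\epsilon\setminus\Lambda_s$; this uniformity comes from the uniform non-degeneracy of $V'_i$ supplied by Theorem \ref{ThmCZ}, together with the definition of weak double resonance.
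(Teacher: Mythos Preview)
Your outline matches the paper's proof in its essential structure: invoke Theorem~\ref{ThmCZ} to obtain the open-dense set $\mathfrak{V}_{k'}$ with uniform non-degeneracy along $\Gamma_{k'}$, then use NHIM persistence to carry the cylinder of $\tfrac12\langle B_iy,y\rangle-V'_i(x_1)$ through the perturbation $V''_i+R_{\epsilon,i}$, and finally bound the oscillation of $y$ along Aubry orbits and set the endpoints as in Theorem~\ref{localchainS}.

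Two small points. First, a sign slip: since $V'_i=-Z_{k'}(p''_i,\cdot)$, the non-degenerate \emph{maximum} of $Z_{k'}$ furnished by Theorem~\ref{ThmCZ} is the non-degenerate \emph{minimum} of $V'_i$, and it is this minimum (with $\partial^2_{x_1}V'_i\ge d>0$) that gives the hyperbolic equilibrium; your inequality $\partial^2_{x_1}V'_i(x_1^*)\le -d$ would make it elliptic. Second, the paper handles the oscillation bound slightly differently and more explicitly: rather than arguing from $C^0$-closeness of $\Pi^\epsilon_i$ to $\Pi^0_i$, it integrates Hamilton's equation for the intermediate truncation $\bar G_i$ (including $V''_i$) to get $\bar y_2(\theta')-\bar y_2(\theta)=\int_\theta^{\theta'}|\bar k''_i|\,\partial_2 V''_i\,d\theta$, and then observes that $|\bar k''_i|\,|\partial_2 V''_i|\to 0$ as $\|\bar k''_i\|\to\infty$ by the Fourier decay $|P_k|\le O(\|k\|^{-r})$. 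This makes transparent why the potentially large factor $|\bar k''_i|$ (coming from the chain rule in $\partial_{x_2}$) does not spoil uniformity across all weak resonances, a point your formulation ``$\|V''_i\|_{C^{r-2}}$ small by the definition of weak'' leaves implicit.
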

\begin{proof}
According to Theorem \ref{ThmCZ}, there exists an open-dense set $\mathfrak{V}_{k'}\subset C^{r}(B_d\times\mathbb{T},\mathbb{R})$, for each $Z_{k'}\in\mathfrak{V}_{k'}$ it holds simultaneously for all $p\in\Gamma_{k'}$ that the maximal point of $Z_{k'}(p,\cdot)$ in $x$ is non-degenerate, namely, the second derivative of $Z_{k'}(p,\cdot)$ in $x$ at the maximal point is uniformly upper-bounded below zero for all $p\in\Gamma_{k'}$.

In this case, the Hamiltonian system $\frac 12\langle By,y\rangle -V'_i(x_1)$ admits a normally hyperbolic invariant cylinder made up of the minimal periodic orbits of type $(0,1)$. As the second resonant term $V''_i(x_1,|\bar k''_i|x_2)$ and the remainder $R_{\epsilon,i}$ are small, the weakly invariant cylinder survives the perturbation $\frac 12\langle By,y\rangle -V'_i\to \frac 12\langle By,y\rangle -V'_i-V''_i+R_{\epsilon,i}$.

To study the oscillation of $y(t)$, we also consider the truncation $\bar G_i=\frac 12\langle By,y\rangle -V'_i(x_1)-V''_i(x_1,|\bar k''_i|x_2)$ first. Let $(\bar x(\theta),\bar y(\theta))$ be the $\lambda g$-minimal orbit, then the second component of $\bar y(\theta)$ satisfies the following relation
$$
\bar y_2(\theta')-\bar y_2(\theta)=\int_{\theta}^{\theta'}|\bar k''_i|\partial_2 V''_i(x_1(\theta),|\bar k''_i|x_2(\theta)) d\theta.
$$
Although the number $|\bar k''_i|$ approaches infinity if the second resonant condition becomes weaker, it does not make trouble to control the oscillation of $\bar y(\theta)$. Indeed, the term $V''_i(x_1,|\bar k''_i|x_2)= Z_{k'_i,k''_i}(p_i,x_1,|\bar k''_i|x_2)$, which is defined in (\ref{decom}). Because $|P_k|$ decrease fast as $\|k\|$ increases: $|P_k|\le O(\|k\|^{-r})$, one has $|\bar k''_i||\partial_2 V''_i|\to 0$ as $\|\bar k''_i\|\to\infty$.

The rest of the proof is similar to the proof of Theorem \ref{localchainS}.
\end{proof}

\section{The estimate of the deviation of Aubry set}
For integrable Hamiltonian $h$, the location of its $c$-minimal orbits is clear. Each $c$-minimal orbit is nothing else but the orbit $(p(t)=c,q(t)=\partial h(c)t+q_0)$. We want to know the deviation of Aubry set when $h$ is under small perturbation $h\to H=p+\epsilon P$.

For nearly integrable Hamiltonian $H=h+\epsilon P$ with convex $h$, one has (see Formulae (4.3) and (4.4) of \cite{C11})
\begin{equation}\label{estimate}
|\tilde\alpha_H(\tilde c)-\tilde\alpha_h(\tilde c)|<\epsilon\|P\|,\qquad |\tilde\beta_H(\rho)-\tilde\beta_h(\rho)|<\epsilon\|P\|
\end{equation}
where $\|P\|=\max_{(p,q)\in B_D\times\mathbb{T}^3}|P(p,q)|$ denote the $C^0$-norm of $P$, $\tilde\alpha_H$ and $\tilde\alpha_h$ denote the $\alpha$-function for $H$ and $h$, $\tilde\beta_H$ and $\tilde\beta_h$ denote the $\beta$-function for $H$ and $h$ respectively.

Since $H^1(\mathbb{T}^3,\mathbb{R})=\mathbb{R}^3$, we treat $\tilde c\in H^1(\mathbb{T}^3,\mathbb{R})$ as a point in $\mathbb{R}^3$. Since $h$ is positive definite, there exists $m>0$ such that
$$
h(p')-h(p)\ge\langle \partial h(p),p'-p\rangle+\frac m2\|p'-p\|^2.
$$
\begin{lem}\label{size}
Let $p=\partial h(\omega)\in B_D$, then the set $\mathscr{L}_{\tilde\alpha_H}(\omega)$ falls into $C_s\sqrt{\epsilon}$-neighborhood of $p$ with $C_s\le 2\sqrt{\|P\|/m}$, namely
$$
\mathrm{dist}(\mathscr{L}_{\tilde\alpha_H}(\omega),p)\le C_s\sqrt{\epsilon}.
$$
\end{lem}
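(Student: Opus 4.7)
The plan is to exploit the Fenchel--Young inequality together with the uniform closeness of $\tilde\alpha_H$ and $\tilde\alpha_h$ provided by (\ref{estimate}), and then use the uniform convexity of $h$ to convert a first--order closeness of $\alpha$-values into a $\sqrt{\epsilon}$-closeness of the minimizers.

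First, I would pick any $\tilde c\in\mathscr{L}_{\tilde\alpha_H}(\omega)$ and write down the two defining/Fenchel--Young relations
$$
\tilde\alpha_H(\tilde c)+\tilde\beta_H(\omega)=\langle \tilde c,\omega\rangle,\qquad
\tilde\alpha_H(p)+\tilde\beta_H(\omega)\ge\langle p,\omega\rangle,
$$
which when subtracted give $\tilde\alpha_H(\tilde c)-\langle\tilde c,\omega\rangle\le \tilde\alpha_H(p)-\langle p,\omega\rangle$. Thus $\tilde c$ is (approximately) a minimizer of the function $c\mapsto\tilde\alpha_H(c)-\langle c,\omega\rangle$, and it suffices to compare this to the corresponding integrable minimization.

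Next, I would apply (\ref{estimate}) on both sides to replace $\tilde\alpha_H$ by $\tilde\alpha_h$, paying the price of $2\epsilon\|P\|$:
$$
\tilde\alpha_h(\tilde c)-\langle\tilde c,\omega\rangle\le \tilde\alpha_h(p)-\langle p,\omega\rangle+2\epsilon\|P\|.
$$
For the integrable system the $\alpha$-function coincides with the Hamiltonian itself, $\tilde\alpha_h(c)=h(c)$ on $B_D$, a standard computation using that each $c$-static orbit has the form $(p(t)=c,\,q(t)=\partial h(c)t+q_0)$. Since $\omega=\partial h(p)$, the point $p$ is the unique minimizer of $h(\cdot)-\langle\cdot,\omega\rangle$; uniform convexity then yields
$$
h(\tilde c)-\langle\tilde c,\omega\rangle-\bigl(h(p)-\langle p,\omega\rangle\bigr)\ge \frac{m}{2}\|\tilde c-p\|^2.
$$

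Combining the last two displays gives $\tfrac{m}{2}\|\tilde c-p\|^2\le 2\epsilon\|P\|$, hence $\|\tilde c-p\|\le 2\sqrt{\epsilon\|P\|/m}$, which is the claimed bound with $C_s=2\sqrt{\|P\|/m}$. The only real subtlety I expect is keeping $\tilde c$ in a region where the convexity estimate for $h$ is valid; this is not an obstacle because $h$ has already been modified outside a neighborhood of $h^{-1}(E)$ to be Tonelli, so the quadratic lower bound holds globally with the same modulus $m$ on $B_D$, and for $\epsilon$ small enough the bound itself forces $\tilde c$ to remain near $p\in B_D$.
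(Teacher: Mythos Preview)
Your proof is correct and follows essentially the same route as the paper's. The only cosmetic difference is that the paper writes the element of $\mathscr{L}_{\tilde\alpha_H}(\omega)$ as $p+\tilde c$ (so its $\tilde c$ is your $\tilde c-p$) and uses the $\beta$-estimate from (\ref{estimate}) together with the $\alpha$-estimate on the single equality $\langle\omega,p+\tilde c\rangle=\tilde\beta_H(\omega)+\tilde\alpha_H(p+\tilde c)$, whereas you first cancel $\tilde\beta_H(\omega)$ by subtracting the Fenchel--Young inequality at $p$ and then apply the $\alpha$-estimate twice; both routes incur the same $2\epsilon\|P\|$ loss and finish with the identical convexity bound.
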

\begin{proof}
Assume $\tilde c+p\in\mathscr{L}_{\tilde\alpha_H}(\omega)$, then by the definition one has
\begin{align*}
\langle \omega,\tilde c+p\rangle&=\tilde\beta_H(\omega)+\tilde\alpha_H(\tilde c+p)\ge \tilde\beta_h(\omega)+\tilde\alpha_h(\tilde c+p)-2\|P\|\epsilon\\
&\ge\tilde\beta_h(\omega)+\tilde\alpha_h(p)+\langle\omega,\tilde c\rangle+\frac m2\|\tilde c\|^2-2\|P\|\epsilon\\
&=\langle\omega,\tilde c+p\rangle+\frac m2\|\tilde c\|^2-2\|P\|\epsilon
\end{align*}
from which one obtains $\|\tilde c\|\le C_s\sqrt{\epsilon}$.
\end{proof}

Small perturbation may induce small rescaling of the rotation vector when both $h$ and $H$ are restricted on the level set with the same energy.
\begin{lem}\label{rescalingomega}
Given a rotation vector $\omega\neq 0$, let $h(\partial h^{-1}(\omega))=\tilde\alpha_H(\mathscr{L}_{\tilde\alpha_H} (\nu\omega))$, then some constant $C_r>0$ exists such that $|\nu-1|\le C_r\sqrt{\epsilon}$.
\end{lem}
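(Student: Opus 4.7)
The plan is to compare the perturbed $\alpha$-value as a function of $\nu$ to its integrable counterpart $\tilde E(\nu):=h(\partial h^{-1}(\nu\omega))$, exploit positive definiteness to get strict monotonicity of $\tilde E(\nu)$ at $\nu=1$, and then read off the $\sqrt{\epsilon}$-bound from Lemma~\ref{size} together with the $\epsilon$-closeness of the $\alpha$- and $\beta$-functions in (\ref{estimate}).

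First I would set $p_\nu=\partial h^{-1}(\nu\omega)$ and note that $\tilde E(\nu)=h(p_\nu)$ is $C^2$ in $\nu$ with
$$
\frac{d}{d\nu}\tilde E(\nu)=\langle\partial h(p_\nu),(\partial^2h(p_\nu))^{-1}\omega\rangle
=\nu\,\langle\omega,(\partial^2h(p_\nu))^{-1}\omega\rangle\ge m'\nu\|\omega\|^2,
$$
for some $m'>0$ depending only on $h$ and $B_D$. In particular $\tilde E$ is strictly monotone on a neighborhood of $\nu=1$ and $\tilde E(1)=h(\partial h^{-1}(\omega))$.

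Next, for any $c\in\mathscr{L}_{\tilde\alpha_H}(\nu\omega)$ the defining duality gives $\tilde\alpha_H(c)=\langle c,\nu\omega\rangle-\tilde\beta_H(\nu\omega)$, whereas for the integrable system $\tilde\alpha_h(p_\nu)=h(p_\nu)=\langle p_\nu,\nu\omega\rangle-\tilde\beta_h(\nu\omega)$. Subtracting,
$$
\tilde\alpha_H(c)-\tilde E(\nu)=\langle c-p_\nu,\nu\omega\rangle+\bigl(\tilde\beta_h(\nu\omega)-\tilde\beta_H(\nu\omega)\bigr).
$$
By Lemma~\ref{size} the first term is $O(\sqrt{\epsilon})$ uniformly for $\nu$ in a compact set, and by (\ref{estimate}) the second term is $O(\epsilon)$. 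Hence there exists a constant $C'>0$, depending only on $h,\omega,\|P\|$, such that $|\tilde\alpha_H(\mathscr{L}_{\tilde\alpha_H}(\nu\omega))-\tilde E(\nu)|\le C'\sqrt{\epsilon}$ for $\nu$ close to $1$.

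Finally, the hypothesis of the lemma reads $\tilde\alpha_H(\mathscr{L}_{\tilde\alpha_H}(\nu\omega))=\tilde E(1)$, so the previous inequality becomes $|\tilde E(\nu)-\tilde E(1)|\le C'\sqrt{\epsilon}$. Combining this with the lower bound $\tilde E(\nu)-\tilde E(1)\ge \tfrac12 m'\|\omega\|^2(\nu-1)$ valid for $\nu$ near $1$ (from the derivative estimate above together with Taylor expansion) yields $|\nu-1|\le C_r\sqrt{\epsilon}$ with $C_r=2C'/(m'\|\omega\|^2)$, which is the desired bound. The main subtlety I anticipate is in step two: the set $\mathscr{L}_{\tilde\alpha_H}(\nu\omega)$ may be multivalued (a flat of $\tilde\alpha_H$), so one must check that $\tilde\alpha_H$ is constant along this flat with the above value (equivalently, that $\langle c-p_\nu,\nu\omega\rangle$ is the same for every $c$ in the set); this follows from the Legendre-Fenchel equality but deserves to be written out carefully to make the bound independent of which $c$ is chosen.
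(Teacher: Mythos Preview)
Your proof is correct and uses the same ingredients as the paper (Lemma~\ref{size} and the estimates~(\ref{estimate})), but organized a bit differently. The paper bounds $|\tilde\alpha_H(p_\nu+\tilde c)-\tilde\alpha_h(p_\nu+\tilde c)|=|h(p)-h(p_\nu+\tilde c)|$ directly by the $\alpha$-closeness in~(\ref{estimate}) and then Taylor expands $h$ at $p$, extracting the term $|1-\nu|\,\langle B_\nu^{-1}\omega,\omega\rangle$ at the cost of a quadratic remainder $\frac{m'}{2}\|p-p_\nu-\tilde c\|^2$ that must be absorbed. You instead pass through the Fenchel duality to write $\tilde\alpha_H(c)-\tilde E(\nu)=\langle c-p_\nu,\nu\omega\rangle+(\tilde\beta_h-\tilde\beta_H)(\nu\omega)$ and bound each term separately, then invoke the monotonicity of $\tilde E(\nu)=h(p_\nu)$; this sidesteps the quadratic remainder entirely and gives a slightly cleaner final step.

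One minor point: your concluding subtlety is not really an issue. The value $\tilde\alpha_H(c)=\langle c,\nu\omega\rangle-\tilde\beta_H(\nu\omega)$ is the Fenchel equality itself, so your bound holds for \emph{every} $c\in\mathscr{L}_{\tilde\alpha_H}(\nu\omega)$; the hypothesis of the lemma only needs a single such $c$ with $\tilde\alpha_H(c)=\tilde E(1)$, and your estimate applies to it. Both your argument and the paper's also tacitly use that $\nu$ is a priori close to $1$ (the paper opens with ``Let $\nu$ be a number close to $1$''); this is harmless since $\tilde E$ is strictly increasing on $\nu>0$, so $|\tilde E(\nu)-\tilde E(1)|\le C'\sqrt{\epsilon}$ already forces $\nu$ into a fixed neighborhood of $1$ where your derivative lower bound is available.
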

\begin{proof}
Let $\nu$ be a number close to $1$. For each rotation vector $\omega$, $\exists$ unique $p,p_\nu$ such that $\omega=\partial h(p)$ and $\nu\omega=\partial h(p_\nu)$. It follows that
$$
p-p_\nu=(1-\nu)B_\nu^{-1}\omega, \qquad \|p-p_{\nu}\|\le |1-\nu|\|\omega\|m^{-1},
$$
where $B_\nu=\partial^2h(\lambda_\nu p+(1-\lambda_\nu)p_\nu)$ is positive definite, $\lambda_{\nu}\in[0,1]$. For $p_\nu+\tilde c\in\mathscr{L}_{\tilde\alpha_H}(\nu\omega)$, one obtains from the relation $|\tilde\alpha_H-\tilde\alpha_h|\le\epsilon\|P\|$ and $\|\tilde c\|\le C_s\sqrt{\epsilon}$ that
$$
\begin{aligned}
\epsilon\|P\|&\ge|\tilde\alpha_H(p_\nu+\tilde c)-\tilde\alpha_h(p_\nu+\tilde c)|=|h(p)-h(p_\nu+\tilde c)|\\
&\ge|\langle\omega,p_{\nu}-p+\tilde c\rangle|-\frac {m'}2\|p-p_{\nu}-\tilde c\|^2\\
&\ge|1-\nu|\langle B_\nu^{-1}\omega,\omega\rangle-C_s\|\omega\|\sqrt{\epsilon}-\frac {m'}2\|p-p_{\nu}-\tilde c\|^2.
\end{aligned}
$$
Therefore, some number $C_r=C_r(\omega,C_s)>0$ exists such that $|1-\nu|\le C_r\sqrt{\epsilon}$.
\end{proof}

Let $\mathbb{F}_{\omega}$ denote the Fenchel-Legendre dual of a rotation vector $\omega$, i.e. $\mathbb{F}_{\omega}=\mathscr{L}_{\tilde\alpha_H}(\omega)$.
The following lemma establishes the location of $\tilde{\mathbb{C}}_k$, it lies in $O(\sqrt{\epsilon})$-neighborhood of $\Gamma_k$. The rescaling $\omega\to\nu\omega$ is bounded by Lemma \ref{rescalingomega}.
\begin{lem}\label{location}
For $E>\min h$, there is a constant $C_H>0$ such that $\forall$ $p\in h^{-1}(E)$, $\omega=\partial h^{-1}(p)$, the set $\mathbb{F}_{\nu\omega}\subset\tilde\alpha_H^{-1}(E)$ lies in $C_H\sqrt{\epsilon}$-neighborhood of $p$, i.e. $\mathbb{F}_{\nu\omega}\subset B_{C_H\sqrt{\epsilon}}(p)$.
\end{lem}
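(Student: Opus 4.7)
The plan is to combine the two preceding lemmas via the triangle inequality. Given $p \in h^{-1}(E)$ with $\omega = \partial h^{-1}(p)$, introduce the auxiliary point $p_\nu = \partial h^{-1}(\nu\omega)$, so that $\mathbb{F}_{\nu\omega} = \mathscr{L}_{\tilde\alpha_H}(\nu\omega)$ is the Fenchel--Legendre dual at the rescaled rotation vector. The strategy is to write, for any $\tilde c \in \mathbb{F}_{\nu\omega}$,
$$
\|\tilde c - p\| \le \|\tilde c - p_\nu\| + \|p_\nu - p\|,
$$
and bound each piece by $O(\sqrt{\epsilon})$ using the previous lemmas.

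First I would apply Lemma \ref{size} to the rotation vector $\nu\omega$: since $p_\nu = \partial h^{-1}(\nu\omega)$, the lemma yields $\|\tilde c - p_\nu\| \le C_s\sqrt{\epsilon}$ for any $\tilde c \in \mathscr{L}_{\tilde\alpha_H}(\nu\omega)$, with $C_s \le 2\sqrt{\|P\|/m}$. Next I would bound $\|p - p_\nu\|$ exactly as in the proof of Lemma \ref{rescalingomega}: writing $p - p_\nu = (1-\nu)B_\nu^{-1}\omega$ where $B_\nu = \partial^2 h(\lambda_\nu p + (1-\lambda_\nu)p_\nu)$ is positive definite with smallest eigenvalue $\ge m$, one obtains $\|p - p_\nu\| \le |1-\nu|\|\omega\|m^{-1}$, and then Lemma \ref{rescalingomega} gives $|1-\nu| \le C_r\sqrt{\epsilon}$, so $\|p - p_\nu\| \le (C_r \|\omega\|/m)\sqrt{\epsilon}$.

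Combining these two estimates yields
$$
\|\tilde c - p\| \le \Bigl(C_s + \frac{C_r\|\omega\|}{m}\Bigr)\sqrt{\epsilon}.
$$
To make the constant $C_H$ uniform over all $p \in h^{-1}(E)$, I would observe that $h^{-1}(E) \subset B_D$ is compact and that $\omega = \partial h(p)$ is continuous, so $\|\omega\|$ is uniformly bounded on $h^{-1}(E)$; similarly the constant $C_r$ obtained in Lemma \ref{rescalingomega} depends continuously on $\omega$, so its supremum over $h^{-1}(E)$ is finite. Setting $C_H$ equal to this uniform supremum of $C_s + C_r\|\omega\|/m$ completes the proof.

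The argument involves no real obstacle; it is a direct corollary of Lemmas \ref{size} and \ref{rescalingomega}. The only thing to be mildly careful about is the uniformity of the constant over the level set $h^{-1}(E)$, which follows automatically from compactness and continuity of all the quantities involved.
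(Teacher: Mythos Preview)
Your proposal is correct and follows essentially the same approach as the paper: apply Lemma~\ref{size} at the shifted rotation vector $\nu\omega$ to bound $\mathrm{dist}(\tilde c,p_\nu)$, use the estimate $\|p-p_\nu\|\le |1-\nu|\|\omega\|m^{-1}$ together with Lemma~\ref{rescalingomega}, and combine via the triangle inequality, taking a supremum of $\|\omega\|$ (and of $C_r$) over the compact level set to obtain a uniform $C_H$.
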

\begin{proof}
By Lemma \ref{size}, one has $\mathbb{F}_{\nu\omega}\subset B_{C_s\sqrt{\epsilon}}(p_v)$. Since $\|p-p_{\nu}\|\le|1-\nu|\|\omega\|m^{-1}$, by Lemma \ref{rescalingomega} and setting $C_H=C_s+C_r\max_{p\in\Gamma_k}\|\omega(p)\|m^{-1}$, we finish the proof.
\end{proof}

\begin{lem}\label{pianyi}
Some number $D_H>0$ exists such that each orbit $(p(t),q(t))$ of $\Phi^t_H$ can not be $\tilde c$-minimal if $\|p(t)-\tilde c\|>D_H\sqrt{\epsilon}$ for all $t\in\mathbb{R}$.
\end{lem}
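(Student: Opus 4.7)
The plan is to combine the Mañé inequality, saturated along $\tilde c$-semi-static orbits, with the uniform strict convexity of $h$. Let $L_H$ be the Tonelli Lagrangian dual to $H$, and for a $\tilde c$-minimal orbit $(p(t),q(t))$ of $\Phi_H^t$ set
\[
\Phi(p,q):=L_H(q,\dot q)-\langle \tilde c,\dot q\rangle+\tilde\alpha_H(\tilde c),\qquad \dot q=\partial_pH(p,q).
\]
The $\tilde c$-semi-static property gives $\int_a^b\Phi(p(t),q(t))\,dt=\phi_{\tilde c}(q(a),q(b))$, where $\phi_{\tilde c}$ is the Mañé potential; since $\mathbb{T}^3$ is compact and $\phi_{\tilde c}$ is Lipschitz, $|\phi_{\tilde c}|$ is bounded by some $M_0>0$, uniformly in $\tilde c$ ranging over the compact level set $h^{-1}(E)$.

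Next, substituting $L_H=\langle p,\dot q\rangle-H$ and $\dot q=\partial h(p)+\epsilon\partial_pP(p,q)$ into $\Phi$, I would arrive at the decomposition
\[
\Phi=\big[h(\tilde c)-h(p)-\langle\partial h(p),\tilde c-p\rangle\big]+\big[\tilde\alpha_H(\tilde c)-h(\tilde c)\big]+\epsilon\big[\langle p-\tilde c,\partial_pP\rangle-P\big].
\]
The first bracket is at least $\frac{m}{2}\|p-\tilde c\|^2$ by the positive definiteness of $h$ quoted before Lemma \ref{size}; the second is bounded by $\epsilon\|P\|$ via (\ref{estimate}); the third is at most $C_1\epsilon(1+\|p-\tilde c\|)$. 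Absorbing the cross term via Young's inequality $\epsilon\|p-\tilde c\|\le\frac{m}{4}\|p-\tilde c\|^2+m^{-1}\epsilon^2$ yields a pointwise lower bound
\[
\Phi(p(t),q(t))\ge\frac{m}{4}\|p(t)-\tilde c\|^2-C_2\epsilon
\]
with $C_2$ depending only on $m$ and $\|P\|_{C^1}$.

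Integrating this inequality and using the semi-static identity gives $\frac{m}{4}\int_a^b\|p(t)-\tilde c\|^2\,dt\le M_0+C_2\epsilon(b-a)$. If $\|p(t)-\tilde c\|>D_H\sqrt{\epsilon}$ held for every $t\in\mathbb R$, the left-hand side would exceed $\frac{m}{4}D_H^2\epsilon(b-a)$; dividing by $b-a$ and letting $b-a\to\infty$ would force $\frac{m}{4}D_H^2\le C_2$, a contradiction whenever $D_H>2\sqrt{C_2/m}$. Hence taking $D_H:=2\sqrt{(C_2+1)/m}$ suffices. The main obstacle I anticipate is keeping all constants uniform in $\tilde c$: the bound $M_0$ on the Mañé potential, the modulus of convexity $m$, and the absorption of the $O(\epsilon\|p-\tilde c\|)$ term must each be extracted from standard theory in a way that is insensitive to the particular $\tilde c$ in the compact level set, which is routine but should be checked carefully.
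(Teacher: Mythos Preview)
Your argument is correct and follows essentially the same route as the paper: both express $L_H-\langle\tilde c,\dot q\rangle+\tilde\alpha_H(\tilde c)$ as $\langle p-\tilde c,\dot q\rangle$ plus lower-order terms, invoke the uniform convexity of $h$ to extract $\frac m2\|p-\tilde c\|^2$, and conclude that strict positivity of the integrand along the whole orbit is incompatible with semi-staticity. The only cosmetic differences are that the paper substitutes the energy identity $H(p(t),q(t))\equiv\tilde\alpha_H(\tilde c)$ at the outset (so the term $-H+\tilde\alpha_H(\tilde c)$ disappears immediately) and then simply asserts ``contradicts minimality,'' whereas you keep that term, bound it via (\ref{estimate}), and spell out the contradiction through the bounded Ma\~n\'e potential and the $b-a\to\infty$ limit; your handling of the cross term $\epsilon\|\partial_pP\|\,\|p-\tilde c\|$ via Young's inequality is also cleaner than the paper's tacit absorption.
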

\begin{proof}
Let $L(\dot q,q)$ be the Lagrangian related to the Hamiltonian $H=h+\epsilon P$ through the
Legendre transformation, then $L(\dot q,q)=\langle p,\dot q\rangle-H(p,q)$ where $\dot q=\partial _pH(p,q)$. If $(p(t),q(t))$ is $\tilde c$-minimal for $\tilde c\in\tilde\alpha^{-1}_H(E)$, one obtains from the identity $H(p(t),q(t))\equiv\tilde\alpha_H(\tilde c)$
$$
L(\dot q(t),q(t))-\langle \tilde c,\dot q(t)\rangle+\alpha_H(\tilde c)=\langle p(t)-\tilde c,\dot q(t)\rangle.
$$
One obtains from Taylor's formula that for certain $\lambda\in[0,1]$ the following holds
$$
h(\tilde c)-h(p(t))=\langle\tilde c-p,\partial h(p)\rangle+\frac 12\Big\langle\partial^2h(\lambda\tilde c+(1-\lambda)(\tilde c-p))(\tilde c-p),(\tilde c-p)\Big\rangle.
$$
Since $h$ is positive definite and $\dot q=\partial_ph+\epsilon\partial P$, we get from the formula as above that
$$
\begin{aligned}
\langle p(t)-\tilde c,\dot q(t)\rangle&=\langle p(t)-\tilde c,\partial h(p)\rangle+\langle p(t)-\tilde c,\epsilon \partial _pP\rangle\\
&\ge\frac m2\|\tilde c-p(t)\|^2-2\epsilon\|P\|-\epsilon\|\partial_pP\|
\end{aligned}
$$
where $m>0$ is the lower bound of the eigenvalues of $\partial^2h$ for $p\in h^{-1}(E)$. We set
$$
D_H>\sqrt{2m^{-1}(2\|P\|+\|\partial_pP\|)},
$$
if $\|p(t)-\tilde c\|>D_H\sqrt{\epsilon}$ $\forall$ $t\in\mathbb{R}$, then $L(\dot q(t),q(t))-\langle \tilde c,\dot q(t)\rangle+\tilde\alpha_H(\tilde c)>0$ holds along the whole orbit $(p(t),q(t))$. It contradicts the minimality of the orbit.
\end{proof}

\section{The construction of global transition chain}

We come to the stage to construct a global transition chain that connects a small neighborhood of $\tilde c^{\star}= p^{\star}$ to a small neighborhood of $\tilde c^*=p^*$.
\subsection{Invariance of the $\alpha$-function}
Let $\tilde{\alpha}_{\epsilon,i}$ be the $\alpha$-function for $\tilde G_{\epsilon,i}$ with the form of (\ref{Hamiltonian}). The isoenergetic reduction from systems with three degrees of freedom to two and half establishes the relation between $\tilde\alpha_{\epsilon,i}^{-1}(0)$ and the graph of $\alpha_{\epsilon,i}$:
\begin{theo}\label{flatthm5}
For the Hamiltonian $\tilde G_{\epsilon}(x,x_n,y,y_n)$, we assume that $\partial_{y_n}\tilde G_{\epsilon}\ne 0$ holds on $(\mathbb{T}^n\times B)\cap\{\tilde G_{\epsilon}^{-1}(E)\}$ where $E>\min\tilde\alpha_{\epsilon}$, $B\subset\mathbb{R}^n$ is a ball. Let $y_n=-\lambda G_{\epsilon}(x,y,t)$ be the solution of $\tilde G_{\epsilon}(x,\frac 1{\lambda}t,y,-\lambda G_{\epsilon})=E$ where $\lambda>0$ is a real number. For a class $c\in H^1(\mathbb{T}^{n-1},\mathbb{R})$, if the $c$-minimal curve $x(t)$ of $G_{\epsilon}$ satisfies the condition
$$
(x(t),\lambda^{-1}t,y(t),-\lambda G_{\epsilon}(x(t),y(t),t))\in\mathbb{T}^n\times B, \qquad \forall\ t\in\mathbb{R}
$$
then one has $\tilde c=(c,-\lambda \alpha_{\epsilon}(c))\in \tilde\alpha_{\epsilon}^{-1}(E)$.
\end{theo}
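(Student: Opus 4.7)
Plan. The argument is a Maupertuis--Jacobi reduction: I lift a $c$-minimizing trajectory of $G_{\epsilon}$ on the base to a $\tilde c$-minimizing trajectory of $\tilde G_{\epsilon}$ on the energy level $\{\tilde G_\epsilon = E\}$, and then apply the standard result of Dias Carneiro for autonomous Tonelli Hamiltonians, according to which every $\tilde c$-minimal invariant measure is supported on the single energy level $\tilde G_\epsilon^{-1}(\tilde\alpha_\epsilon(\tilde c))$; thus exhibiting a $\tilde c$-semistatic orbit on $\{\tilde G_\epsilon=E\}$ forces $\tilde\alpha_\epsilon(\tilde c)=E$.

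The first step is a Maupertuis-type identity between the two Lagrangians. Let $s$ be the time parameter for the flow of $\tilde G_{\epsilon}$, set $t=\lambda x_n(s)$, and let $L,\tilde L$ denote the Fenchel duals of $G_\epsilon,\tilde G_\epsilon$. A direct computation using $\tilde G_{\epsilon}\equiv E$ and $y_n=-\lambda G_\epsilon$ on the level set, together with $dt/ds=\lambda\,\partial_{y_n}\tilde G_\epsilon$, gives $\tilde L\, ds = L\, dt - E\, ds$. Combining with $\langle\tilde c,d\tilde x\rangle=\langle c,dx\rangle-\alpha_\epsilon(c)\,dt$ (immediate from $\tilde c=(c,-\lambda\alpha_\epsilon(c))$ and $dx_n=dt/\lambda$) yields the fundamental action identity
\[
\bigl(\tilde L-\langle\tilde c,\dot{\tilde x}\rangle\bigr)\,ds \;=\; \bigl(L-\langle c,\dot x\rangle+\alpha_\epsilon(c)\bigr)\,dt \;-\; E\,ds,
\]
valid along any curve obtained from the implicit substitution on the energy shell $\{\tilde G_\epsilon=E\}\cap(\mathbb{T}^n\times B)$.

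Second, lift the given $c$-minimal curve $x(t)$ of $G_\epsilon$ to the autonomous phase space by setting $x_n(s)=t(s)/\lambda$ and $y_n(s)=-\lambda G_{\epsilon}(x(t),y(t),t)$; the hypothesis $\partial_{y_n}\tilde G_\epsilon\neq 0$ makes this solvable by the implicit function theorem, and the lifted orbit remains inside $\mathbb{T}^n\times B$ by assumption. A short calculation verifies that the lifted curve satisfies Hamilton's equations for $\tilde G_\epsilon$ at energy $E$. For any $s_0<s_1$, the $c$-semistaticity of $x(t)$ identifies $\int_{t_0}^{t_1}(L-\langle c,\dot x\rangle+\alpha_\epsilon(c))\,dt$ with the Ma\~n\'e potential $h_c(x(t_0);x(t_1))$, so the action identity gives
\[
\int_{s_0}^{s_1}\bigl(\tilde L-\langle\tilde c,\dot{\tilde x}\rangle+E\bigr)\,ds \;=\; h_c\bigl(x(t_0);x(t_1)\bigr).
\]
A parallel Maupertuis argument (the reparametrization $dt/ds=\lambda\,\partial_{y_n}\tilde G_\epsilon$ sets up a bijection between absolutely continuous curves on $\{\tilde G_\epsilon=E\}\cap(\mathbb{T}^n\times B)$ and absolutely continuous curves in $T^*\mathbb{T}^{n-1}\times\mathbb{T}_t$) identifies the right-hand side with the autonomous Ma\~n\'e potential $\tilde h_{\tilde c}(\tilde x(s_0);\tilde x(s_1))$, so the lifted orbit is $\tilde c$-semistatic. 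Since it lies on $\{\tilde G_\epsilon=E\}$, the Dias Carneiro theorem yields $\tilde\alpha_\epsilon(\tilde c)=E$.

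The main obstacle is the infimum comparison in the last paragraph: one must verify that, under the Tonelli hypothesis and the open condition $\partial_{y_n}\tilde G_\epsilon\ne 0$, the Maupertuis bijection actually preserves the infima of the relevant action functionals, so that the two Ma\~n\'e potentials coincide for endpoints in $\mathbb{T}^n\times B$. Once this bijection is justified (using in particular strict convexity of $\tilde G_\epsilon$ in $y_n$, which guarantees that any competitor curve stays close to $\partial_{y_n}\tilde G_\epsilon\ne 0$ and admits the same Maupertuis substitution), the rest of the proof is bookkeeping around the action identity and a direct appeal to the variational characterization of $\tilde\alpha_\epsilon$.
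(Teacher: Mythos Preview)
Your approach and the paper's are the same at the core: both rest on the action identity
\[
\int\Bigl(\Bigl\langle\frac{dx}{dt},y-c\Bigr\rangle-G_{\epsilon}+\alpha_{\epsilon}(c)\Bigr)\,dt
=\int\Bigl(\Bigl\langle\frac{d\tilde x}{d\tau},\tilde y-\tilde c\Bigr\rangle-\tilde G_{\epsilon}+E\Bigr)\,d\tau,
\]
which the paper simply states (deferring the rest to \cite{C17b}) and which you rederive correctly.

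There is, however, a circularity in your final step. You want to show the lift is $\tilde c$-semistatic by proving
\[
\int_{s_0}^{s_1}\bigl(\tilde L-\langle\tilde c,\dot{\tilde x}\rangle+E\bigr)\,ds
\;=\;\tilde h_{\tilde c}\bigl(\tilde x(s_0);\tilde x(s_1)\bigr),
\]
but the Ma\~n\'e potential $\tilde h_{\tilde c}$ is defined with $\tilde\alpha_\epsilon(\tilde c)$ inside the integral, not with $E$; equality of the two sides would already presuppose $\tilde\alpha_\epsilon(\tilde c)=E$. Moreover, your Maupertuis bijection matches curves \emph{on the energy shell} $\{\tilde G_\epsilon=E\}$ with reduced curves, whereas competitors for $\tilde h_{\tilde c}$ are arbitrary configuration-space curves with no energy constraint; strict convexity in $y_n$ does not pin them to a single level set, so your proposed fix does not close the gap.

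The clean way out (implicit in the identity) is to split into two inequalities. For $\tilde\alpha_\epsilon(\tilde c)\ge E$: boundedness of $\int_0^S(\tilde L-\langle\tilde c,\dot{\tilde x}\rangle+E)\,d\tau$ along the lifted $c$-semistatic orbit forces its empirical measures to have $\tilde c$-action equal to $-E$, so the minimum over invariant measures is $\le -E$. For $\tilde\alpha_\epsilon(\tilde c)\le E$: for an \emph{arbitrary} curve $\tilde x(\tau)$ with $\dot x_n>0$, choose $\tilde y=(y,-\lambda G_\epsilon(x,y,t))$ with $y=\partial_{\dot x}L$ and apply Fenchel's inequality $\tilde L(\tilde x,\dot{\tilde x})\ge\langle\dot{\tilde x},\tilde y\rangle-\tilde G_\epsilon(\tilde x,\tilde y)=\langle\dot{\tilde x},\tilde y\rangle-E$; this upgrades the identity to the inequality
\[
\bigl(\tilde L-\langle\tilde c,\dot{\tilde x}\rangle+E\bigr)\,d\tau\;\ge\;\bigl(L-\langle c,\dot x\rangle+\alpha_\epsilon(c)\bigr)\,dt,
\]
and the right-hand side integrates to something nonnegative over every closed curve (this is exactly the Ma\~n\'e critical-value characterization of $\alpha_\epsilon(c)$). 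Hence $E$ dominates the Ma\~n\'e critical value for $\tilde c$, i.e.\ $E\ge\tilde\alpha_\epsilon(\tilde c)$. With both inequalities in hand, Dias Carneiro is not needed.
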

It was proved in \cite{C17b} (Theorem 3.3 there). Let $\tilde x=(x,\lambda^{-1}t)$, $\tilde y=(y,-\lambda G_{\epsilon})$ and $\tau$ denote the time of $\tilde G_{\epsilon}$, the theorem follows from the identity
$$
\int\Big(\Big\langle\frac{dx}{dt},y-c\Big\rangle-G_{\epsilon}+\alpha_{\epsilon}(c)\Big)dt
=\int\Big(\Big\langle\frac{d\tilde x}{d\tau},\tilde y-\tilde c\Big\rangle -\tilde G_{\epsilon}+E\Big)d\tau.
$$
For the application of the theorem in the paper, one has $n=3$. If we regard the graph of $\alpha_{\epsilon,i}$ over  $\mathbb{A}_{i}\cup\mathbb{C}_i^-\cup\mathbb{C}_i^+$ as a set in $\mathbb{R}^3$,
$$
\{(c,-\alpha_{\epsilon,i}(c)):c\in \mathbb{A}_{i}\cup\mathbb{C}_i^-\cup\mathbb{C}_i^+\}
$$
it precisely lies in the surface $\tilde\alpha_{\epsilon,i}^{-1}(0)$. The graph of $\mathbb{A}_{i}$, $\mathbb{C}_i^-$ and $\mathbb{C}_i^+$ are denoted by $\tilde{\mathbb{A}}_{i}$, $\tilde{\mathbb{C}}^-_i$ and $\tilde{\mathbb{C}}_i^+$ respectively. Formula (\ref{xuanzuan}) induces a linear transformation in $H^1(\mathbb{T}^3,\mathbb{R})$ under which the sphere $\tilde{\alpha}_{\epsilon}$ undergoes a linear transformation
\begin{equation}\label{c-rotate}
\Psi^\ell_i:\ \tilde c\to M_i\tilde c.
\end{equation}

Let $\tilde{\alpha}_{\Phi_{\epsilon F_i}^*H}$ be the $\alpha$-function for the Hamiltonian $\Phi_{\epsilon F_i}^*H$, where $\epsilon F_i$ is the generating function for the KAM iteration. Because the rescaling (\ref{energylevel}) induces
$$
\int\tilde pd\tilde x-\Phi_{\epsilon F_i}^*Hdt=\sqrt{\epsilon}\Big(\int\tilde yd\tilde x-\tilde G_{\epsilon,i}ds\Big),
$$
one obtains the rescaling of the first cohomology class, from $\tilde{\alpha}_{\epsilon,i}^{-1}(0)$ to $\tilde{\alpha}_{\Phi_{\epsilon F_i}^*H}^{-1}(0)$
\begin{equation}\label{rescaling}
\Psi^r_i:\ c-c_i\to\sqrt{\epsilon}(c-c_i), \qquad c_3-c_{i,3}\to \frac{\epsilon}{\omega_{3,i}}(c_3-c_{i,3}),
\end{equation}
where $\tilde c_i=(c_i,c_{i,3})=(c_{i,1},c_{i,2},c_{i,3})=p''_i$ if we treat both as the points in $\mathbb{R}^3$.

Because $\Phi_{\epsilon F_i}$ is a Hamiltomorphism, it does not change the Mather set, Aubry set and Ma\~n\'e set, due to Theorem \ref{ThmBernard}.

\subsection{Construction of the global transition chain }
In this section, we show how to construct a global transition chain from the local transition chains.

Recall the circle $\Gamma_{k^\star}$ and $\Gamma_{k^*}$ constructed in Section 3. For generic perturbation $P$, due to Proposition \ref{pro}, the number of strong double resonant points is independent of $\epsilon$. For each $i\in\Lambda_s$, the composition of $\Psi_i^\ell$ and $\Psi_i^r$ maps $\mathbb{A}_{i}$ to an annulus of cohomology equivalence $\tilde{\mathbb{A}}_{i}\subset\tilde\alpha^{-1}_H(E)$ where $\tilde\alpha_H$ denotes the $\alpha$-function of $H$. It also maps $\mathbb{C}_i^\imath$ for all $i\in\Lambda_{\epsilon}$ to a local channel $\tilde{\mathbb{C}}_i^\imath\subset\tilde\alpha^{-1}_H(E)$ ($\imath=\pm,w$), namely
$$
\tilde{\mathbb{A}}_i=\Psi_i^\ell\Psi_i^r(\mathbb{A}_i),\qquad  \tilde{\mathbb{C}}_i^\imath=\Psi_i^\ell\Psi_i^r(\mathbb{C}_i^\imath), \qquad \imath=\pm,w.
$$

Since $H$ is a small perturbation of $h$, the sphere $\tilde\alpha_H^{-1}(E)$ lies in $O(\epsilon)$-neighborhood of $\tilde\alpha_h^{-1}(E)=h^{-1}(E)$ if we treat both as the set in $\mathbb{R}^3$. Let
$$
\tilde{\mathbb{C}}_{k}=\{\tilde c\in\tilde\alpha_H^{-1}(E):\langle k,\omega\rangle=0\  \forall\, \omega\in\mathscr{L}_{\tilde\alpha_H}^{-1}(\tilde c)\},
$$
Under generic perturbation $\epsilon P$, it looks like a channel made up of flats. A subset is said to be a flat of $\tilde\alpha_H$ if $\tilde\alpha_H$ is affine when it is restricted on the set, no longer affine on any set properly containing the set. Since $H$ is autonomous, $E>\min\alpha_H$ and each $\omega\in\mathscr{L}^{-1}_{\tilde\alpha_H}(\tilde{\mathbb{C}}_{k})$ is resonant, $\mathbb{F}_{\omega}=\mathscr{L}_{\tilde\alpha_H}(\omega)$ is a flat of dimension one or two.

The set $\tilde{\mathbb{C}}_{k^{\star}}\cup\tilde{\mathbb{C}}_{k^*}$ obviously contains the flats $\mathbb{F}_i$ ($i\in\Lambda_s$) for strong double resonance we are concerned about
$$
\mathbb{F}_i=\{\tilde c\in\tilde\alpha_H^{-1}(E):\langle k'_i,\omega\rangle=\langle k''_i,\omega\rangle=0,\  \forall \omega\in\mathscr{L}_{\tilde\alpha_H}^{-1}(\tilde c),\ k'_i=k^{\star}\ \mathrm{or}\ k^*\}.
$$
Let $\mathbb{F}_i+d_i\sqrt{\epsilon}=\{\tilde c\in\tilde\alpha^{-1}_H(E):\mathrm{dist}(\tilde c,\mathbb{F}_i)\le d_i\sqrt{\epsilon}\}$ with $d_i<\Delta_i$, the set
$$
\tilde{\mathbb{C}}=\Big(\tilde{\mathbb{C}}_{k^\star}\cup\tilde{\mathbb{C}}_{k^*}\backslash(\cup_{i\in\Lambda_s} \mathbb{F}_i +d_i\sqrt{\epsilon})\Big) \cup\Big(\cup_{i\in\Lambda_s}\tilde{\mathbb{A}}_i\Big)
$$
is path-connected (Theorem \ref{localchainS} and Lemma \ref{jointlemma} below).
According to Lemma \ref{location}, $\tilde{\mathbb{C}}_k$ lies in $C_H\sqrt{\epsilon}$-neighborhood of $\Gamma_k$. The rescaling of the corresponding frequencies is bounded by Lemma \ref{rescalingomega}.

For any $\delta>0$, there exists $\epsilon_0>0$ such that $\forall$ $\epsilon\le\epsilon_0$ there exist totally irreducible $k^\star,k^*\in\mathbb{Z}^3\backslash\{0\}$ such that $\tilde{\mathbb{C}}_{k^\star}\cup\tilde{\mathbb{C}}_{k^*}$ contains two points $\tilde c^{\star},\tilde c^*\in\tilde{\mathbb{C}}$ with $\|\tilde c^{\star}-p^{\star}\|<\frac{\delta}2$ and $\|\tilde c^*-p^*\|<\frac{\delta}2$.
\begin{figure}[htp] 
  \centering
  \includegraphics[width=5.0cm,height=3.0cm]{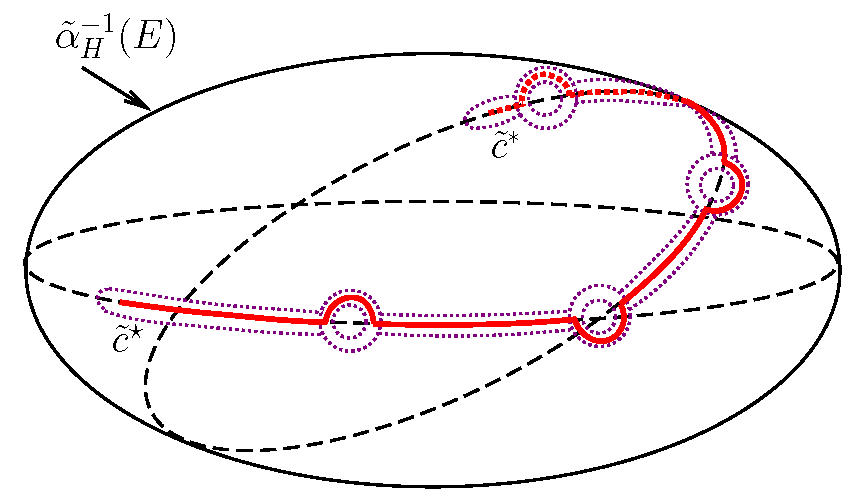}
\end{figure}
Let $\Gamma_c=\Gamma_c(\epsilon P)$ be a path lying inside $\tilde{\mathbb{C}}$, connecting $\tilde c^{\star}$ to $\tilde c^*$. It keeps away from the boundary of $\tilde{\mathbb{C}}$ and moves along a circle of cohomology equivalence when it turns around the strong double resonance. By definition, $\tilde\alpha_H(\tilde c)\equiv E>\min h$ for all $\tilde c\in\tilde{\mathbb{C}}$. We are going to show that $\Gamma_c$ is a global transition chain.

\subsection{The covering property}
According to Theorem \ref{th4.1}, the path $M_0^{-1}\Gamma_k$  is covered by the disks $\cup_{i\in\Lambda_{\epsilon}}\tilde\Sigma_{\epsilon,i}$ where
$$
\tilde\Sigma_{\epsilon,i}=\{p:|p-M_0^{-1}p''_i|<(\eta T_i)^{-1}\epsilon^{\kappa},\ \eta\in (0,1]\}.
$$
We assume the subscripts $\{i\in\Lambda_{\epsilon}\}$ are well-ordered such that $\tilde\Sigma_{\epsilon,i}$ is adjacent to $\tilde\Sigma_{\epsilon,i\pm1}$. Restricted on the energy level set $\bar H^{-1}(E)$ contained in $\tilde\Sigma_{\epsilon,i}\times\mathbb{T}^3$, the Hamiltonian $\bar H=\bar h+\epsilon\bar P$ is reduced to the normal form $G_{\epsilon,i}$ defined on $\Sigma'_{\epsilon,i}$ as shown in Lemma \ref{normalf}. The corresponding channels $\mathbb{C}^\pm_i$ and $\mathbb{C}^w_i$ are described in Theorem \ref{localchainS} and \ref{localchainw}.

Since $T_i^{-1}\epsilon^{\kappa}\ge\epsilon^{\frac 13}$, by replacing $\eta$ with $\frac{\eta}2$, we can assume some $p'\in M_0^{-1}\Gamma_k$ exists, between $p''_i$ and $p''_{i+1}$, such that the disc $\{\|p-p'\|\le \xi\sqrt{\epsilon}\}$ is contained in $\tilde\Sigma_{\epsilon,i}\cap\tilde\Sigma_{\epsilon,i+1}$ and is mapped into $\Sigma'_{\epsilon,i}$ and into $\Sigma'_{\epsilon,i+1}$ by the transformations to get the normal form (\ref{2dHmili}), as shown in Section 3. The number $\xi>0$ can be large if $\epsilon>0$ is small.

Let $\Gamma_{k,i}=M_0^{-1}\Gamma_k\cap\tilde\Sigma_{\epsilon,i}$. As $\Gamma_k$ is smooth and $\epsilon$ is small, $\Gamma_{k,i}$ is $o(\epsilon)$-close to a straight line. For each disk $\tilde\Sigma_{\epsilon,i}$ we have local channel $\tilde{\mathbb{C}}_i^\imath$ made up of the flats $\mathscr{L}_{\tilde\alpha_H}(\nu_p\partial h(p))$ for $p\in\Gamma_{k,i}$, where $\nu_p$ is close to 1 such that $\tilde\alpha_H(\mathscr{L}_{\tilde\alpha_H} (\nu_p\partial h(p)))=h(p)$. If $\tilde{\mathbb{C}}_i^\imath$ and $\tilde{\mathbb{C}}_{i+1}^\imath$ overlap, each flat $\mathscr{L}_{\tilde\alpha_H}(\nu_p\partial h(p))\subset \tilde{\mathbb{C}}_i^\imath\cup \tilde{\mathbb{C}}_{i+1}^\imath$ either entirely lies in the intersection $\tilde{\mathbb{C}}_i^\imath\cap\tilde{\mathbb{C}}_{i+1}^\imath$, or completely stays outside.

\begin{lem}\label{jointlemma}
There exist two numbers: a suitably small $\epsilon_0>0$ and a suitably large $\xi>0$, such that for each $\epsilon\in(0,\epsilon_0]$, the adjacent channels $\tilde{\mathbb{C}}_i^\imath,\tilde{\mathbb{C}}_{i+1}^\imath$ overlap over a channel
$$
\tilde{\mathbb{C}}_i^\imath\cap\tilde{\mathbb{C}}_{i+1}^\imath\supseteq \cup_{p\in\Gamma_{k,i}\cap\Gamma_{k,i+1}}\mathscr{L}_{\tilde\alpha_H}(\nu_p\partial h(p)),
$$
the length of the segment $\Gamma_{k,i}\cap\Gamma_{k,i+1}$ is not shorter than $\frac {\xi}2\sqrt{\epsilon}$.
\end{lem}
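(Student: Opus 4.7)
The plan is to combine a purely geometric overlap estimate in $p$-space with the channel descriptions from Theorems~\ref{localchainS} and \ref{localchainw}. From the paragraph immediately preceding the lemma, there is a point $p'\in M_0^{-1}\Gamma_k$ between $p''_i$ and $p''_{i+1}$ such that the ball $\{\|p-p'\|\le\xi\sqrt\epsilon\}$ lies in $\tilde\Sigma_{\epsilon,i}\cap\tilde\Sigma_{\epsilon,i+1}$, with $\xi$ arbitrarily large provided $\epsilon_0$ is small, since each disk has radius at least $(K^*\eta)^{-1}\epsilon^{1/3}$, which dominates $\sqrt\epsilon$. Because $M_0^{-1}\Gamma_k$ is smooth with $O(1)$ curvature and passes through $p'$, elementary geometry yields $|M_0^{-1}\Gamma_k\cap\{\|p-p'\|\le\xi\sqrt\epsilon\}|\ge\tfrac{3}{2}\xi\sqrt\epsilon$ once $\epsilon_0$ is small enough, and this arc sits in $\Gamma_{k,i}\cap\Gamma_{k,i+1}$, giving the length bound.

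For each $p$ on this arc I would then check that the flat $\mathbb F_p=\mathscr L_{\tilde\alpha_H}(\nu_p\partial h(p))$ belongs to both $\tilde{\mathbb{C}}_i^\imath$ and $\tilde{\mathbb{C}}_{i+1}^\imath$. By Theorem~\ref{ThmBernard}, conjugation by $\Phi_{\epsilon F_i}$ preserves the Aubry and Ma\~n\'e structure, and the inverses of the linear and rescaling maps $\Psi_i^\ell,\Psi_i^r$ of (\ref{c-rotate})--(\ref{rescaling}) convert $\mathbb F_p$ into a flat of the normal form $G_{\epsilon,i}$. Because $p\in\Gamma_{k'}$, the vector $M_i^{-t}M_0^{-t}\partial h(p)$ has vanishing first component, so this flat has the form $\mathscr L_{\alpha_{\epsilon,i}}(\lambda_i^p g_i^\imath)$ with $\lambda_i^p>0$ a monotone function of $|p-p''_i|/\sqrt\epsilon$, read off from Legendre duality on the mechanical truncation $\tfrac12\langle B_iy,y\rangle-V_i$. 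Lemma~\ref{rescalingomega} shows $\nu_p=1+O(\sqrt\epsilon)$ is harmless, and the same analysis applies verbatim with $i$ replaced by $i+1$.

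It remains to verify $\lambda_i^p\in[\lambda_i^\imath,\bar\lambda_i^\imath]$ and its $i+1$ counterpart. By Theorems~\ref{localchainS} and \ref{localchainw}, the upper endpoint $\bar\lambda_i^\imath$ corresponds to $|y|\approx(1-\delta')(\eta T_i)^{-1}\epsilon^{\kappa-1/2}$, i.e.\ to $|p-p''_i|\approx(1-\delta')(\eta T_i)^{-1}\epsilon^\kappa$; since the whole ball $\{\|p-p'\|\le\xi\sqrt\epsilon\}$ sits inside $\tilde\Sigma_{\epsilon,i}$ and $\xi\sqrt\epsilon\ll\delta'(\eta T_i)^{-1}\epsilon^\kappa$ for $\epsilon_0$ small, the upper bound holds. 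The lower endpoint $\lambda_i^\imath$ corresponds to $\alpha_{\epsilon,i}(c)=E_{i,0}>0$, which requires $|p-p''_i|\ge C\sqrt\epsilon$ for some $C=C(E_{i,0},B_i,V_i)>0$. Excising from the arc the two sub-arcs where $|p-p''_i|<C\sqrt\epsilon$ or $|p-p''_{i+1}|<C\sqrt\epsilon$ (each of length $\le 2C\sqrt\epsilon$) still leaves length $\ge\tfrac{3}{2}\xi\sqrt\epsilon-4C\sqrt\epsilon\ge\tfrac{\xi}{2}\sqrt\epsilon$ once $\xi$ is fixed with $\xi\ge 4C$. The main obstacle, in my view, is this simultaneous two-sided constraint — the same $p$ must sit in the good range of two different normal forms with different centers $p''_i,p''_{i+1}$ and different rotation matrices $M_i,M_{i+1}$ — but it is resolved by the asymmetry of scales: the upper-bound slack is $O(\epsilon^{1/3})$ while the lower-bound cutoff is only $O(\sqrt\epsilon)$, which leaves ample room once $\xi$ is fixed large and then $\epsilon_0$ is chosen sufficiently small.
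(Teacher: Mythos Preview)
Your overall strategy---use the $p$-space overlap established just before the lemma, then verify that each flat $\mathscr{L}_{\tilde\alpha_H}(\nu_p\partial h(p))$ lies in both local channels---is the same as the paper's. But there is a genuine gap in your verification step.

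You argue that because the point $p$ lies well inside $\tilde\Sigma_{\epsilon,i}$, the corresponding cohomology class lands in $\mathbb{C}_i^\imath$. This conflates two different objects: the point $p\in\Gamma_k$ (which merely parameterizes a rotation vector) and the \emph{location of the Aubry set} $\tilde{\mathcal{A}}(\tilde c')$ for $\tilde c'\in\mathscr{L}_{\tilde\alpha_H}(\nu_p\partial h(p))$. Membership in $\tilde{\mathbb{C}}_i^\imath$ is ultimately a statement about where the Aubry orbits of $G_{\epsilon,i}$ sit inside $\Sigma'_{\epsilon,i}$ (this is how $\bar\lambda_i^\imath$ is characterized in Theorems~\ref{localchainS} and~\ref{localchainw}), and even the reduction from $H$ to $G_{\epsilon,i}$ via Theorem~\ref{flatthm5} requires the relevant minimal orbits to remain in the domain. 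Your appeal to ``Legendre duality on the mechanical truncation'' gives the right picture for the unperturbed system, but does not control where the Aubry set of the perturbed system actually lies; Lemma~\ref{rescalingomega} only handles the rotation-vector rescaling, not orbit localization.

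The paper supplies exactly this missing localization. It combines three ingredients you do not invoke: the oscillation bound $\mathrm{Osc}|y(\theta)|\le\Delta'_j$ from Theorems~\ref{localchainS}/\ref{localchainw}, pulled back to $\mathrm{Osc}\|p(t)\|\le C_V\sqrt\epsilon$; Lemma~\ref{pianyi}, which forces any $\tilde c$-minimal orbit to enter the $D_H\sqrt\epsilon$-ball about $\tilde c$; and Lemma~\ref{location}, which places $\tilde c'$ within $C_H\sqrt\epsilon$ of $p$. Together these confine $\tilde{\mathcal{A}}(\tilde c')$ to the ball $\{\|\cdot-p\|\le(C_V+D_H+C_H)\sqrt\epsilon\}$, and the choice $\xi\ge 4(C_V+D_H+C_H)$ then guarantees this ball maps into both $\Sigma'_{\epsilon,i}$ and $\Sigma'_{\epsilon,i+1}$. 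That is the substantive content of the proof. As a secondary remark, your lower-bound excision is unnecessary in the case the paper actually treats (both $p''_i,p''_{i+1}$ weak, where $\mathbb{C}_i^w$ has no lower cutoff), and in the strong case the overlap segment sits near the disk boundaries, automatically far enough from the center on the $\sqrt\epsilon$ scale.
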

\begin{proof}
We study the case that the double resonance at both adjacent points $p''_i, p''_{i+1}$ are weak. Other cases can be treated similarly.

Because of Lemma \ref{localchainw}, the oscillation of $y(\theta)$ is bounded by $\Delta'_j$ if $(x(\theta),y(\theta))$ is an orbit lying in $\tilde{\mathcal{A}}(c)$ with $c\in\mathbb{C}_j^w$. Since $|\bar k''_i||\partial_2 V''_i|\to 0$ as $|\bar k''_i|\to\infty$, we find from (\ref{6.7}) that $\Delta'_j$ is uniformly bounded for all $j$. In the original coordinate, because of the three steps of coordinate changes, $\Phi_{\epsilon F}$, (\ref{xuanzuan}) and (\ref{qicihua}), there exists a constant $C_V>0$ such that $\mathrm{Osc}\|p(t)\|\le C_V\sqrt{\epsilon}$ if $(p(t),q(t))$ is an orbit in $\tilde{\mathcal{A}}(\tilde c)$ for $\tilde c\in\tilde{\mathbb{C}}^w_j$.

Let us investigate the location of $p(t)$ if $(p(t),q(t))$ is a $\tilde c$-minimal orbit. If both $\tilde c$ and $p$ are treated as points in $\mathbb{R}^3$, one has the relation $\tilde\alpha_h=h$.
It follows from Lemma \ref{location} that the channel $\tilde{\mathbb{C}}_k$ ($k=k^*,k^\star$) is entirely contained in $C_H\sqrt{\epsilon}$-neighborhood of $\Gamma_k$. It follows from Lemma \ref{pianyi} that $(p(t),q(t))$ would not be $\tilde c$-minimal orbit of $\Phi_H^t$ if $p(t)$ entirely keeps away from $D_h\sqrt{\epsilon}$-neighborhood of $\tilde c$. Since $\mathrm{Osc}|p(t)|\le C_V\Delta'_j\sqrt{\epsilon}$ if $(p(t),q(t))\subset\tilde{\mathcal{A}}(\tilde c)$ for $\tilde c=\Psi_j^\ell\Psi_j^r(c)$, each Aubry set $\tilde{\mathcal{A}}(\tilde c)$ is entirely contained in $(C_V+D_h+C_H)\sqrt{\epsilon}$-neighborhood of $\tilde c\in\Gamma_k$ if we treat both as the points in $\mathbb{R}^3$.

The parameters $\xi$ and $\epsilon_0$ in the lemma are set such that
$$
\xi\ge 4(C_V+D_h+C_H), \qquad \xi\sqrt{\epsilon_0}\le\frac 12\epsilon_0^{\frac 13}.
$$
Then, there exists a segment $\Gamma'_{k,i}$ of $\Gamma_k$ between $p''_i$ and $p''_{i+1}$ with the length $\frac \xi2\sqrt{\epsilon}$ such that for each $p'\in\Gamma'_{k,i}$ one has $\{\|p-p'\|<\frac \xi4\sqrt{\epsilon}\}\subseteq\tilde\Sigma_{\epsilon,i}\cup\tilde\Sigma_{\epsilon,i+1}$. Indeed, $p'\in\Gamma'_{k,i}$ implies that the distance between $p'$ and the boundary of $\tilde\Sigma_{\epsilon,j}$ is not smaller than $\frac \xi4\sqrt{\epsilon}$ for $j=i,i+1$.

For each $p'\in\Gamma'_{k,i}$, let $\tilde c'\in\mathscr{L}_{\tilde\alpha_H}(\nu_{p'}\partial h(p'))$. Then, the Aubry set $\tilde{\mathcal{A}}(\tilde c')$ for $H$ lies in the disc $\{\|p-p'\|<\frac \xi{4}\sqrt{\epsilon}\}$. Under the composition of the maps $\Phi_{\epsilon F}$, (\ref{xuanzuan}), (\ref{qicihua}) and (\ref{energylevel}), for $\tilde\Sigma_{\epsilon,j}\to\Sigma'_{\epsilon,j}$ for $j=i,i+1$ the domain $\{\|p-p'\|<\frac \xi{4}\sqrt{\epsilon}\}$ is mapped onto a domain entirely contained $\Sigma'_{\epsilon,j}$. The Aubry set of $(\Psi_j^\ell\Psi_j^r)^{-1}\tilde c'$ for $G_{\epsilon,j}$ is contained in the domain $\mathbb{T}^2\times\Sigma'_{\epsilon,j}\times|\bar k''_j|\mathbb{T}$. It implies that $\tilde c'\in\tilde{\mathbb{C}}^w_j$ for $j=i,i+1$.
\end{proof}

Since $\tilde{\mathbb{C}}^+_i$ and $\tilde{\mathbb{C}}^-_i$ are joined by $\Psi_i^\ell\Psi_i^r(\mathbb{A}_i)$, the whole path $\Gamma_c$ is covered by
\begin{equation}\label{covering}
\Gamma_c\subset\bigcup_{i\in\Lambda_s}\Big(\Psi_i^\ell\Psi_i^r(\mathbb{A}_{i})\cup\tilde{\mathbb{C}}_i^- \cup\tilde{\mathbb{C}}_i^+\Big)\cup \bigcup_{i\in\Lambda_{\epsilon}\backslash\Lambda_s}\tilde{\mathbb{C}}_i^w.
\end{equation}

\subsection{The genericity}
Before the proof of Theorem \ref{chain}, we focus on a prescribed path. The following theorem is Theorem 5.1 of \cite{C17b}.

\begin{theo}\label{liangouzao}
There exists a set $\mathfrak{C}_{\epsilon_0}$ cusp-residual in $\mathfrak{B}_{\epsilon_0}\subset C^r(B_D\times\mathbb{T}^3,\mathbb{R})$ with $r\ge 6$ such that for each $\epsilon P\in\mathfrak{C}_{\epsilon_0}$, the path $\Gamma_c$ is a transition chain.
\end{theo}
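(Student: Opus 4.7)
The strategy is to verify, piece by piece along the covering \eqref{covering}, that one of the two alternatives of Definition \ref{chaindef1} holds along $\Gamma_c$, and then to organize all the required generic conditions into the cusp-residual structure. On each strong double resonance annulus $\Psi_i^\ell\Psi_i^r(\mathbb{A}_i)$, $i\in\Lambda_s$, Theorem \ref{C17bTh} already provides that every circle $\alpha^{-1}_{\epsilon,i}(\xi)$ making up $\mathbb{A}_i$ is a path of cohomology equivalence; since the maps $\Psi_i^\ell$ of \eqref{c-rotate} and $\Psi_i^r$ of \eqref{rescaling} are linear and rescaling, the annihilator subspaces $\mathbb{V}_c^\perp$ are preserved, and alternative (2) of Definition \ref{chaindef1} holds on this portion of $\Gamma_c$.

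The main work is on the channel pieces $\tilde{\mathbb{C}}_i^{\pm}$ and $\tilde{\mathbb{C}}_i^{w}$. By Theorems \ref{C17aTh} and \ref{localchainw}, for each $c$ in such a channel the Aubry set $\tilde{\mathcal{A}}(c)$ is confined (modulo $\sigma_i$) to a NHWIC of the reduced two-and-a-half-degree-of-freedom system $G_{\epsilon,i}$, and away from the finite list of bifurcation classes $\{c_j\}$ the Aubry set is a single hyperbolic periodic orbit. I would establish condition (\textbf{HA}) on these generic classes by showing that, for a residual set of perturbations, the stable and unstable manifolds of this periodic orbit (viewed on an appropriate finite cover $\check M$ when necessary) intersect transversally outside itself, producing points in $\check\pi\mathcal{N}(c,\check M)\setminus(\mathcal{A}(c,\check M)+\delta)$; transverse homoclinics form a locally finite set and are automatically totally disconnected. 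The upper semicontinuity of the Ma\~n\'e set together with hyperbolicity on the cylinder yields the required continuity of $\tilde{\mathcal{A}}(c')$ as $c'\to c$. At each bifurcation class $c_j$, where $\tilde{\mathcal A}(c_j)$ has two components on adjacent pieces of the cylinder, alternative (2) is used on a short arc via cohomology equivalence transverse to $\mathbb{V}_{c_j}$, exactly the mechanism of turning around the strong double resonance developed in \cite{C17b}. The joining property at the interface between adjacent channels and between a channel and the annulus $\mathbb{A}_i$ is guaranteed by Lemma \ref{jointlemma}.

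The cusp-residual set is then assembled in two layers. First, the open-dense conditions of Proposition \ref{pro} on $Z_{k^\star}$ and $Z_{k^*}$, the residual conditions of Theorems \ref{C17bTh}, \ref{C17aTh}, \ref{localchainS} on each $V_i$ at the finitely many strong double resonances along $\Gamma_{k^\star}\cup\Gamma_{k^*}$, and the Kupka--Smale style transversality of stable/unstable manifolds of the hyperbolic periodic orbits on each NHWIC, are all scale-invariant and hence cut out a residual subset $\mathfrak{R}_{\epsilon_0}\subset\mathfrak{S}_{\epsilon_0}$. For each $P\in\mathfrak{R}_{\epsilon_0}$, the set of scalings $\lambda\in[0,a_P]$ for which the prescribed energy $E$ avoids a countable union of ``bad'' values (scales at which some bifurcation class meets $\alpha_{\epsilon,i}^{-1}(E)$ degenerately, or at which a heteroclinic tangency arises on one of the finitely many cylinders) is residual in $[0,a_P]$; this is the second layer $R_P$. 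Setting $\mathfrak{C}_{\epsilon_0}=\{\lambda P:P\in\mathfrak{R}_{\epsilon_0},\lambda\in R_P\}$ gives the desired cusp-residual set.

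The hardest step is the uniform verification of (\textbf{HA}) along a continuum of cohomology classes in each channel. One must control the transversality simultaneously for all non-bifurcation $c$, including near the bifurcation classes $c_j$ where the cylinder structure changes and the invariant manifolds of one periodic orbit approach those of another; a delicate quantitative argument on the NHWIC, using the hyperbolicity estimates of Theorem \ref{C17aTh} together with the oscillation bound on $y(\theta)$ from Theorem \ref{localchainS}, is required to show that transverse homoclinic/heteroclinic intersections persist up to (and across) each $c_j$, so that the two alternatives of Definition \ref{chaindef1} fit together continuously at every point of $\Gamma_c$.
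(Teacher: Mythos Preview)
Your proposal contains a genuine conceptual gap in identifying which case is hard and which mechanism applies where.

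You assert that on the channels $\tilde{\mathbb{C}}_i^{\pm}$, $\tilde{\mathbb{C}}_i^{w}$, away from the bifurcation classes $\{c_j\}$, the Aubry set is a single hyperbolic periodic orbit, and propose to verify (\textbf{HA}) there via Kupka--Smale transversality of its stable and unstable manifolds. This is backwards. Proposition~\ref{pro6.1} concerns the \emph{truncated autonomous} system $\bar G'_i$; once the time-periodic remainder $R_{\epsilon,i}$ is restored, the dynamics of $G_{\epsilon,i}$ restricted to each NHWIC is an area-preserving twist map, and for an uncountable set of classes $c$ the Aubry set $\tilde{\mathcal{A}}(c)$ is an invariant two-torus (the suspension of an invariant circle of the twist map), not a periodic orbit. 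The paper's actual split is the opposite of yours: when $\tilde{\mathcal{A}}(c)$ is \emph{not} a full $2$-torus, the annihilator $\mathbb{V}_c^\perp$ is nontrivial and cohomology equivalence (alternative~(2)) handles the step, as in \cite{CY1}; the case requiring (\textbf{HA}) is precisely the torus case, and it occurs for a continuum of classes.

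For this torus case, a Kupka--Smale argument for a single object does not suffice: one must show that the Ma\~n\'e set outside a $\delta$-neighborhood of the Aubry torus is totally disconnected \emph{simultaneously} for an uncountable family of tori. The paper's mechanism (Lemma~\ref{lem7.4} and the Appendix) is variational: the barrier functions $B_c^\ell$ form a family with $\tfrac{1}{2}$-H\"older dependence on the area parameter $\sigma$ labelling invariant circles (Lemma~\ref{moduluscontinuity}), hence of finite box-dimension in $C^0$; a dimension-counting argument against perturbations in an infinite-dimensional space of generating functions (Lemma~\ref{A5}) then forces the minimum sets to be trivial for all $c$ at once. There is a further subtlety you miss: because the normal-form reduction $\Phi_{\epsilon F}$ costs two derivatives, genericity must be established directly for $H$ in $C^r$, not for $G_{\epsilon,i}$ in $C^{r-2}$; this is why the paper reduces $H$ afresh via the unimodular $\bar{\mathscr{M}}_i$ in the proof of Lemma~\ref{lem7.4} rather than quoting Theorem~1.2 of \cite{C17b}. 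Your two-layer assembly of the cusp-residual set also does not match: the paper first obtains a cusp-residual $\mathfrak{C}'_{\epsilon_0}$ guaranteeing the NHWIC structure (Lemmas~\ref{strong} and~\ref{LmGenericNHIC}, then Kuratowski--Ulam), and only afterwards, around each fixed $\epsilon P\in\mathfrak{C}'_{\epsilon_0}$, finds a residual set in a small ball $\mathfrak{B}_{\delta(\epsilon P)}$ for~(\textbf{HA}), combining again via Kuratowski--Ulam.
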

\begin{proof}
To check if $\Gamma_c$ is a transition chain under generic perturbation $\epsilon P$, one only needs to check, for generic perturbation $\epsilon P$, the condition of transition chain for each $c\in\mathbb{A}_{i}\cup\mathbb{C}_i^-\cup\mathbb{C}_i^+$ if $i\in\Lambda_s$ and for each $c\in\mathbb{C}_i^w$ if $\Lambda_{\epsilon}\backslash\Lambda_s$.

As the first step, we show the cusp-residual property that, for every $c\in\mathbb{C}_i^-\cup\mathbb{C}_i^+$ with $i\in\Lambda_s$ and for $c\in\mathbb{C}_i^w$ with $i\in\Lambda_{\epsilon}\backslash\Lambda_s$, the Aubry set $\tilde{\mathcal{A}}(c)$ lies on some NHIC
(candidate of transition chain):

({\bf CT}: $i\in\Lambda_s$). {\it There is an annulus of cohomology equivalence $\mathbb{A}_i$  connecting channel $\mathbb{C}_i^-$ to $\mathbb{C}_i^+$, and $\tilde{\mathcal{A}}(c)$ lies on some NHIC for $c\in\mathbb{C}_i^-\cup\mathbb{C}_i^+$}.

({\bf CT}: $i\in\Lambda_{\epsilon}\backslash\Lambda_s$). {\it The Aubry set $\tilde{\mathcal{A}}(c)$ lies on certain NHIC for each $c\in\mathbb{C}_i^w$}.

At strong resonant point $p_i''$, one has a decomposition $C^r(B_D\times \T^3,\R)=C^r(\T^2,\R)\oplus C^r(B_D\times \T^3,\R)/C^r(\T^2,\R)$ via $P(p,q)=V_i(\langle k',q\rangle,\langle k''_i,q\rangle)+P''(p,q)$ where the resonant term $V_i(\langle k',q\rangle,\langle k''_i,q\rangle)=Z(p''_i,\langle k',q\rangle,\langle k''_i,q\rangle)$ is defined in (\ref{resonance}), $P''_i=P-V_i\in C^r(B_D\times \T^3,\R)/C^r(\T^2,\R)$ is the non-resonant term and $k'=k^*,k^{\star}$. 

\begin{lem}\label{strong}
Let $\mathfrak{V}_i$ be the residual set used in Theorem \ref{C17bTh}, Proposition \ref{pro6.1}, Theorem \ref{C17aTh} and Theorem \ref{localchainS}. Then, the following set
$$
\mathfrak{P}_i=\{V_i+P''_i: V_i\in\mathfrak{V}_i,\ (\mathbf{CT}: i\in\Lambda_s)\ \mathrm{holds}\ \mathrm{for}\ h+\epsilon(V_i+P''_i)\}
$$
is cusp-residual in $\mathfrak{B}_1$.
\end{lem}
\begin{proof}
According to Lemma \ref{normalf}, the Hamiltonian $H$ is reduced to the normal form $G_{\epsilon,i}(x,y,\theta)$ of Formula (\ref{2dHmili}) when it is restricted on the energy level set $H^{-1}(E)$ lying in $\mathbb{T}^3\times\{|p-p''_i|\le (\eta T_i)^{-1}\epsilon^{\kappa}\}$, the neighborhood of double resonant point $p''_i$. In such a normal form, the main part is independent of $\epsilon$ and the remainder $R_{\epsilon,i}$ is uniformly bounded in the sense that $\|R_{\epsilon,i}\|_{C^{r-2}}\le a_0\epsilon^{\kappa}$ holds for all $P\in\mathfrak{B}_1$ if we consider it as the function of $(x,y,\vartheta)$ where $\vartheta=\omega_{3}|k''_i|^{-1}\sqrt{\epsilon}^{-1}\theta$.

By applying Theorem 1.1 and 1.2 of \cite{C17b} to the Lagrangian obtained from $G_{\epsilon,i}$, we find that there exists an open-dense set $\mathfrak{V}_i$ in $C^r(\mathbb{T}^2,\mathbb{R})$, each $V_i\in\mathfrak{V}_i$ is associated some small $\epsilon_{V_i}>0$ such that the condition $(${\bf CT}: $i\in\Lambda_s)$ holds for each $R_{\epsilon,i}\in\mathfrak{B}_{\epsilon_{V_i}}\subset C^{r-2}(\Sigma'_{\epsilon,i}\times\mathbb{T}^3,\mathbb{R})$. Because $\|R_{\epsilon,i}\|_{C^{r-2}}\le a_0\epsilon^{\kappa}$, for $P=V_i+P''_i\in\mathfrak{B}_1$ with $V_i\in\mathfrak{V}_i$, the condition $(${\bf CT}: $i\in\Lambda_s)$ holds for each $\epsilon\in(0,(a_0^{-1}\epsilon_{V_i})^{1/\kappa}]$.
\end{proof}

Next, we consider NHICs away from strong double resonance. The path $\Gamma_{k'}$ induces a decomposition $C^r(B_d\times \T^3,\R)=C^r(B_D\times \T^1,\R)\oplus C^r(B_D\times \T^3,\R)/C^r(B_D\times \T^1,\R)$
via $P(p,q)=Z_{k'}(p,\langle k',q\rangle)+P'(p,q)$ ($k'=k^{\star},k^*$) where $Z_{k'}$ is defined in (\ref{decom}) consisting of Fourier modes of $P$ in span$_\Z\{k'\}$, and $P'=P-Z_{k'}\in C^r(B_D\times \T^3,\R)/C^r(B_D\times \T^1,\R)$. Treating $\langle k',q\rangle$ as a scalar variable $x$ defined on $\mathbb{T}$, there exists an open-dense set $\mathfrak{Z}_{k'}$ of $C^r(\Gamma_{k'}\times\mathbb{T},\mathbb{R})$ such that for each $Z_{k'}\in\mathfrak{Z}_{k'}$ it holds simultaneously for all $p\in\Gamma_{k'}$ that the second derivative of $Z_{k'}$ in $x$ at its maximal point is uniformly upper bounded below zero, because of Theorem 1.1 of \cite{Zh2}. In this case, the number of strong double resonant points is independent of $\epsilon$.

Given $Z_{k'}\in\mathfrak{Z}_{k'}$, let $\{p'_i\in\Gamma_{k'}\}$ denote the set of bifurcation points, i.e. $Z_{k'}(p'_i,x)$ has two maximal points in $x$. Let $N_w$ denote the cardinality of $\{p'_i\in\Gamma_{k'}\}$, it is finite. Let $H_1=h(p)+\epsilon Z_{k'}(p,\langle k,q\rangle)$, then $\Phi_{H_1}^t$ admits $N_w+1$ pieces of NHIC consisting of minimal periodic orbits of $\Phi_{\bar H}^t$. Because of the presence of strong double resonances, the NHICs may break into more pieces of NHICs if we take the second resonant term into account. However, outside of the neighborhoods of strong double resonances, the number of NHICs will be not more than $N\le N_s+N_w+2$, where $N_s=\#(\Lambda_s)$. Indeed,

\begin{lem}\label{LmGenericNHIC} In $\mathfrak{B}_1$ the following set
$$
\mathfrak{P}_{k'}=\cup\{Z_{k'}+P':Z'_{k'}\in\mathfrak{Z}_{k'},\ (\mathbf{CT}: i\in\Lambda_{\epsilon}\backslash\Lambda_s)\ \mathrm{holds}\ \mathrm{for}\ h+\epsilon(Z_{k'}+P')\}
$$
is cusp-residual.
\end{lem}
\begin{proof}
The normal form around weak resonant point $p''_i$ takes the form of (\ref{2dHmili}) where
$V_i(x)=V'_i(x_1)+V''_i(x_1,|\bar k''|x_2)$ with small $V''_i$, the NHIC for the Hamiltonian flow of $\frac 12\langle B_iy,y\rangle -V'_i(x_1)$ survives the perturbations $V''_i$ and $R_{\epsilon,i}$ if the second derivative of $V'_i$ at its minimal point is positive and some $\epsilon$ is sufficiently small.

We notice $V'_i(x)=-Z'_{k'}(p''_i,x)$. Although the number $\#(\Lambda_{\epsilon}\backslash\Lambda_s)$ increases to infinity as $\epsilon\downarrow 0$, it does not cause trouble.
There is an open-dense set $\mathfrak{Z}_{k'}\subset C^r(B_d\times\mathbb{T},\mathbb{R})$, for each $Z_{k'}(p,x)\in\mathfrak{Z}_{k'}$, the second derivative of $Z_{k'}(p,x)$ in $x$ at its maximal point is uniformly upper bounded below zero. So, the normal hyperbolicity of all NHICs of $G_{\epsilon,i}$ is uniformly bounded from below as $\epsilon$ decreases to $0$. Consequently, some $\epsilon_{Z_{k'}}>0$ exists such that for each $i\in\Lambda_{\epsilon}\backslash\Lambda_s$ the NHIC for $G_{\epsilon,i}$ persists provided $\|R_{\epsilon,i}\|_{C^2}\le \epsilon_{Z_{k'}}$.

By Lemma \ref{normalf} again, given $P=Z'_{k'}+P'_{k'}\in\mathfrak{B}_1$ with $Z'_{k'}\in\mathfrak{Z}_{k'}$, the condition $(${\bf CT}: $i\in\Lambda_{\epsilon}\backslash\Lambda_s)$ holds for each $\epsilon\in(0,(a_0^{-1}\epsilon_{Z_{k'}})^{1/\kappa}]$.

For the coordinate transformation (\ref{xuanzuan}) the matrix $M_i$ is set according to whether $|\bar k''_{i,2}|\ge |\bar k''_{i,3}|$ or not. The path $\partial h(\Gamma_{k'})$ admits a partition of four arcs. The condition $|\bar k''_{i,2}|>|\bar k''_{i,3}|$ holds on two arcs and $|\bar k''_{i,2}|<|\bar k''_{i,3}|$ holds on other two arcs.
\end{proof}

Let $\bar{\mathfrak{P}}_{k'}=\cup_{\nu\in\mathbb{R}}\{\nu P:P\in \mathfrak{P}_{k'}\}\cap\mathfrak{B}_1$ and $\bar{\mathfrak{P}}_i=\cup_{\nu\in\mathbb{R}}\{\nu P:P\in \mathfrak{P}_i\}\cap\mathfrak{B}_1$, then the set $\bar{\mathfrak{P}}_{k^\star}\cap\bar{\mathfrak{P}}_{k^*}\cap(\cap_i\bar{\mathfrak{P}}_{i})$ is residual in $\mathfrak{B}_1$. Applying the Kuratowski-Ulam theorem (categorical analogue of the Fubini theorem, c.f. Chapter 15 of \cite{Ox}), one obtains a residual set $\mathfrak{R}\subset\mathfrak{S}_1$, each $P\in\mathfrak{R}$ is associated with a set $I_P$ residual in $[0,1]$ such that $\cup_{\lambda\in I_P}\lambda P\subset \bar{\mathfrak{P}}_{k^\star}\cap\bar{\mathfrak{P}}_{k^*}\cap(\cap_i\bar{\mathfrak{P}}_{i})$.

Each $P\in\mathfrak{R}$ determines finitely many strong double resonant points $\{p''_i,i\in\Lambda_s(P)\}$.
For each $i\in\Lambda_s(P)$, one obtains the potential $V_i=V_i(P)$ which determines a number $\epsilon_{V_i}>0$. Let $a_P=\min\{\epsilon_{k^\star},\epsilon_{k^*},\epsilon_{V_i},i\in\Lambda_s(P)\}$ where $\epsilon_{k^\star},\epsilon_{k^*}>0$ are determined by the first resonant conditions $k^\star$ and $k^*$ respectively (see Lemma \ref{LmGenericNHIC}). For any $\epsilon\in (0,a_P)$, the flow $\Phi_H^t$ with $H=h+\epsilon P$ admits the conditions ({\bf CT}:$i\in\Lambda_s$) and ({\bf CT}:$i\in\Lambda_{\epsilon}\backslash\Lambda_s$). Therefore, {\it a cusp-residual set $\mathfrak{C}'_{\epsilon_0}$ exists such that for each $\epsilon P\in\mathfrak{C}'_{\epsilon_0}$ the conditions $(${\bf CT}$:i\in\Lambda_s)$ and $(${\bf CT}$:i\in\Lambda_{\epsilon}\backslash\Lambda_s)$ hold}.

Some cohomology equivalence exists around each class $c$ lying in the channels if $\tilde{\mathcal{A}}(c)$ is not a 2-dimensional torus, as it was shown in \cite{CY1}. To finish the proof of Theorem \ref{liangouzao}, we need to verify the condition $({\bf HA})$ for each class $c$ lying in the channels if $\tilde{\mathcal{A}}(c)$ is a 2-dimensional torus. For each Hamiltonian of normal form $G_{\epsilon,i}$, it has been proved in Theorem 1.2 of \cite{C17b}. However, we shall not apply that result since the residual set obtained there is for $C^{r-2}$-topology instead of $C^r$-topology. One step KAM iteration for the construction of $\Phi_{\epsilon F}$ makes $\tilde G_{\epsilon,i}$ lose two times of differentiability.

We apply the following lemma to finish the proof of Theorem \ref{liangouzao}. The proof of the lemma will be presented afterward.
\begin{lem}\label{lem7.4}
Each perturbation $\epsilon P\in\mathfrak{C}'_{\epsilon_0}$ is associated with a set $\mathfrak{P}'_{\epsilon P}$ residual in a ball $\mathfrak{B}_{\delta(\epsilon P)}\subset C^r(B_D\times\mathbb{T}^3,\mathbb{R})$ with small radius $\delta(\epsilon P)>0$, such that for each $\epsilon P'\in\mathfrak{P}'_{\epsilon P}$ the Hamiltonian $h+\epsilon(P+P')$ possesses the property: the condition $({\bf HA})$ holds for each first cohomology classes $\tilde c\in\tilde{\mathbb{C}}_{k^\star}\cup\tilde{\mathbb{C}}_{k^*}\backslash(\cup_{i\in\Lambda_s} \mathbb{F}_i +d_i\sqrt{\epsilon})$ if the Aubry set $\tilde{\mathcal{A}}(\tilde c)$ consists of a two-dimensional torus.
\end{lem}

Obviously, $\cup_{\epsilon P\in\mathfrak{C}'_{\epsilon_0}} (\epsilon P+\mathfrak{B}_{\delta(\epsilon P)})\supseteq\mathfrak{C}'_{\epsilon_0}$. Let $\mathfrak{R}'=\cup_{\epsilon P\in\mathfrak{C}'_{\epsilon_0}} (\epsilon P+\mathfrak{P}'_{\epsilon P})$, because of the Kuratowski-Ulam theorem, it contains a cusp-residual set: there is a set $\bar{\mathfrak{R}}'$ residual in $\mathfrak{S}_1$, each $P\in\mathfrak{S}_1$ is associated with a set $R_p$ residual in $(0,a_P)$ such that $\epsilon P\in\mathfrak{R}'$ holds for all $\epsilon\in R_p$.

For $P\in\bar{\mathfrak{R}}\cap\bar{\mathfrak{R}}'$, $\Lambda_s(P)$ is a finite set. Therefore, a set $R_P$ residual in $(0,a_P)$ exists such that for each $\epsilon\in R_P$, the conditions ({\bf CT}:$i\in\Lambda_s$), ({\bf CT}:$i\in\Lambda_{\epsilon}\backslash\Lambda_s$) and $({\bf HA})$ hold, namely, the flow $\Phi_H^t$ with $H=h+\epsilon P$ admits a transition chain. It verifies the cusp-residual property of the transition chain $\Gamma_c$. The proof of Theorem \ref{liangouzao} is completed.
\end{proof}

\begin{proof}[Proof of Lemma \ref{lem7.4}]
For $\epsilon P\in\mathfrak{C}'_{\epsilon_0}$, the Hamiltonian $h+\epsilon P$ behaves like an {\it a priori} unstable system when it is restricted in neighborhood of $\tilde c$-minimal orbits with $\tilde c\in\tilde{\mathbb{C}}_{k^\star}\cup\tilde{\mathbb{C}}_{k^*}\backslash(\cup_{i\in\Lambda_s} \mathbb{F}_i +d_i\sqrt{\epsilon})$. Although $\epsilon$ is small, it is treated as a fixed number since $\epsilon P$ is fixed. There exists a small number $\delta=\delta(\epsilon P)$ such that for any $\epsilon P'\in\mathfrak{B}_{\delta(\epsilon P)}$ the condition ({\bf CT}: $i\in\Lambda_s$) and ({\bf CT}: $i\in\Lambda_{\epsilon}\backslash\Lambda_s$) holds for $h+\epsilon(P+P')$.

Due to the covering property (\ref{covering}), we only need to check the condition ({\bf HA}) for  $\Gamma_c\cap\tilde{\mathbb{C}}_i^\pm$ and $\Gamma_c\cap\tilde{\mathbb{C}}_i^w$. Restricted in $\tilde\Sigma_{\epsilon,i}\cap\{H^{-1}(E)\}$, we reduce $H(p,q)$ to a system with two and half degrees of freedom so that we can apply the result of (\cite{CY1,CY2}).

We consider the case $\Gamma_c\cap\tilde{\mathbb{C}}_i^{+}$, the proof for the cases $\Gamma_c\cap\tilde{\mathbb{C}}_i^{-}$ and $\Gamma_c\cap\tilde{\mathbb{C}}_i^{w}$ is the same. Restricted on $(\tilde\Sigma_{\epsilon,i}\times\mathbb{T}^3)\cap\{\bar H^{-1}(E)\}$, the Hamiltonian $\bar H=\bar h(\bar p)+\epsilon \bar P(\bar p,\bar q)$ is reduced to the normal form $G_{\epsilon,i}$ of (\ref{2dHmili}), due to Lemma \ref{normalf}. Because of Theorem \ref{C17aTh}, there exist finitely many normally hyperbolic weakly invariant cylinders, denoted by $\tilde\Pi_{\ell}=\tilde\Pi^{\epsilon}_{i,E_\ell-d,E_{\ell+1}+d}$,  modulo the shift $\sigma_i$. For each $c\in\mathbb{C}_i^+=(\Psi_i^\ell\Psi_i^r)^{-1}\tilde{\mathbb{C}}_i^+$, the Aubry set lies in these cylinders. Denoted by $\Pi_{\ell}$ the time-0-section of $\tilde\Pi_{\ell}$, i.e. $\Pi_{\ell}=\tilde\Pi_{\ell}|_{\theta=0}$.

Similar to the argument in Section 4.1 of \cite{C17b}, the cylinder $\Pi_{\ell}$ can be treated as a part of the image of a standard cylinder $\Pi=\{(x,y):(x_2,y_2)=0,x_1\in\mathbb{T},y_1\in [0,1]\}$ under a map $\psi$: $\Pi\to\Pi_{\ell}$. This map induces a $2$-form $\psi^*\omega$ on $\Pi$
$$
\psi^*\omega=D\psi dx_1\wedge dy_1
$$
where $D\psi$ is the Jacobian of $\psi$. Since the second de Rham cohomology group of $\Pi$ is trivial, it follows from Moser's argument on the isotopy of symplectic forms \cite{Mo} that there exists a diffeomorphism $\psi_1$ on $\Pi$ such that
$$
(\psi\circ\psi_1)^*\omega=dx_1\wedge dy_1.
$$
Let $\theta_1=\frac{2|k''_i|\sqrt{\epsilon}\pi}{\omega_{3,i}}$. Since $\Pi_{\ell}$ is invariant for the time-periodic map $\Phi_{G_{\epsilon,i}}=\Phi_{G_{\epsilon,i}}^{\theta_1}$ and $\Phi_{G_{\epsilon,i}}^*\omega=\omega$, one has
\begin{equation}\label{image}
((\psi\circ\psi_1)^{-1}\circ\Phi_{G_{\epsilon}}\circ(\psi\circ\psi_1))^* dx_1\wedge dy_1=dx_1\wedge dy_1
\end{equation}
i.e. $(\psi\circ\psi_1)^{-1}\circ\Phi_{G_{\epsilon}}\circ(\psi\circ\psi_1)$ preserves the standard area. Each invariant circle $\Gamma_{\sigma}\subset\Pi_{\ell}$ is pulled back to the standard cylinder, denoted by $\Gamma^*_{\sigma}$ which is Lipschitz. The parameter $\sigma$ is set to be the algebraic area bounded by the circle and a prescribed one, $\|\Gamma^*_{\sigma}-\Gamma^*_{\sigma'}\|_{C^0}\le b_0\sqrt{|\sigma-\sigma'|}$ (see \cite{CY1}). Since the maps $\psi,\psi_0$ are smooth, back to the current coordinates one has $\|\Gamma_{\sigma}-\Gamma_{\sigma'}\|_{C^0}\le b_1\sqrt{|\sigma-\sigma'|}$. We notice that the cylinder $\Pi_{\ell}$ may be crumple and slanted, the constant $b_1$ might approach infinity if the crumpled cylinder extends to the homoclinics. However, since the cylinder is kept away from the double resonance for certain distance (see Lemma \ref{localchainS}, $\alpha_{\epsilon,i}(\mathscr{L}_{\alpha_{\epsilon,i}}(\lambda_i^\pm g_i^{\pm}))=d_i>0$), the cylinders are moderately crumpled. The constant $b_1$ is therefore uniformly bounded for $\sigma$ if we are restricted on the cylinder $\Pi_{\ell}$.

Recall the rescaling (\ref{energylevel}), (\ref{qicihua}) and let $v''_i=(v''_{i,1},v''_{i,2},v''_{i,3})$ be the double resonant point, we have a transformation $\mathscr{R}_i$
\begin{equation*}
\mathscr{R}_i: \
\begin{cases}
&x_j=u_j, \qquad y_j=\sqrt{\epsilon}^{-1}(v_j-v''_{i,j}), \qquad j=1,2,\\
&\theta=\frac{\sqrt{\epsilon}}{\omega_{i,3}}u_3, \qquad I=\frac{\omega_{i,3}}{\epsilon}(v_3-v''_{i,3}).
\end{cases}
\end{equation*}
Replacing $(x,y,\theta)$ in the normal form $G_{\epsilon,i}$ of (\ref{2dHmili}) by $(u,v_1,v_2)$ defined as above, we obtain a Hamiltonian with two and half degrees of freedom
\begin{equation}\label{newH}
G^\star_{\epsilon,i}(u_1,u_2,u_3,v_1,v_2)=\epsilon G_{\epsilon,i}(x(u_1,u_2),\theta(u_3),y(v_1,v_2))
\end{equation}
where $u_3$ plays the role of time. Because of (\ref{newH}), the function $G^\star_{\epsilon,i}$ solves the equation $\mathscr{M}_i^*\Phi_{\epsilon F}^*\mathscr{M}_0^*H(u,v_1,v_2,-G^\star_{\epsilon,i})=E$, see Formula (\ref{local}).

Obviously, $\tilde\Pi^\star_{\ell}=\mathscr{R}_i\tilde\Pi_{\ell}$ is weakly invariant for the flow of $G^\star_{\epsilon,i}$. Let $\Pi^\star_{\ell}=\tilde\Pi^\star_{\ell}|_{u_3=0}$. Compared with $\Pi_{\ell}$, it shrinks in the coordinate $v$ by the scale $\sqrt{\epsilon}$. The cylinder may become more crumpled, but it is controlled by the factor $1/\sqrt{\epsilon}$. Let $\Gamma^\star_{i,\sigma}$ denote the invariant circle in $\Pi^\star_{\ell}$, one has $\|\Gamma^\star_{\sigma}-\Gamma^\star_{\sigma'}\|_{C^0}\le \frac{b_1}{\sqrt{\epsilon}}\sqrt{|\sigma-\sigma'|}$. Let $\tilde\Gamma^\star_{\sigma}=\cup_{u_3\in[0,|\bar k''_i|]}\Phi_{G^\star_{\epsilon,i}}^{u_3}\Gamma^\star_{\sigma}$, one has
$$
\|\tilde\Gamma^\star_{\sigma}-\tilde\Gamma^\star_{\sigma'}\|_{C^0}\le\frac{b_1}{\sqrt{\epsilon}} \max_{u_3\in[0,|\bar k''_i|]}\|D\Phi_{G^\star_{\epsilon,i}}^{u_3}\| \sqrt{|\sigma-\sigma'|}
$$
Let $H^\star=\mathscr{M}_i^*\Phi_{\epsilon F}^*\mathscr{M}_0^*H$. Corresponding to the cylinder $\tilde\Pi^\star_{\ell}$, there is a cylinder $\hat\Pi^\star_{\ell}$ modulo the shift $\sigma_i$, lying in the energy level set $\{H^{\star-1}(E)\}$
$$
\hat\Pi^\star_{\ell}=\{(u,v)\in\mathbb{T}^3\times\mathbb{R}^3:(u,v_1,v_2)\in\tilde\Pi^\star_{\ell},\ v_3=G^\star_{\epsilon,i}(u,v_1,v_2)\}.
$$
Those invariant 2-tori $\{\tilde\Gamma^\star_{\sigma}\}$ lying in the cylinder. Under the inverse of $\mathscr{M}_i$ and $\Phi_{\epsilon F}$, the tori $\{\tilde\Gamma^\star_{\sigma}\}$ and the cylinder $\hat\Pi^\star_{\ell}$ are mapped to the invariant tori and weakly invariant cylinder for the flow of the Hamiltonian $\mathscr{M}^*_0H$:
$$
\hat\Pi_{\ell}=\Phi_{\epsilon F}^{-1}\mathscr{M}_i^{-1}\hat\Pi^\star_{\ell},\qquad \hat\Gamma_{\sigma}=\Phi_{\epsilon F}^{-1}\mathscr{M}_i^{-1}\hat\Gamma_{\sigma}^\star.
$$
Since $\Phi_{\epsilon F}$ is a diffeomorphism close to identity and $\mathscr{M}_i$ is linear, there exists a number $b_2>0$ such that
\begin{equation}\label{modulus}
\|\hat\Gamma_{\sigma}-\hat\Gamma_{\sigma'}\|_{C^0}\le\frac{b_2}{\sqrt{\epsilon}}\sqrt{|\sigma-\sigma'|}
\end{equation}
Because $\partial\mathscr{M}_0^*h(\bar p''_i)=\bar\omega=(0,\bar\omega_2,\bar\omega_3)$ satisfies the resonant condition $\bar k''_i=(0,\bar k''_{i,2},\bar k''_{i,3})$, we construct another canonical transformation
$\bar{\mathscr{M}}_i$: $\phi=\bar M_i^{-t}\bar q$ and $I=\bar M_i\bar p$, where
$$
\bar M_i=\mathrm{diag}(1,\bar M_{i,2}), \qquad \mathrm{with}\ \
\bar M_{i,2}=\left[
\begin{matrix} \bar k''_{i,2} & j_2\\
\bar k''_{i,3} & j_3
\end{matrix}
\right]
$$
the integers $j_2,j_3$ are chosen such that $\bar k''_{i,2}j_3-\bar k''_{i,3}j_2=1$. Clearly, $\bar M_i$ is  uni-modular and the first derivative of $\bar{\mathscr{M}}^*_i\mathscr{M}_0^*h$ in $I_3$ is not equal to zero at the point $I''_i=\bar M_i\bar p''_i$. Therefore, there exists a $C^r$-function $G^*_{\epsilon,i}$ which solves the equation
\begin{equation}\label{reduction}
\bar{\mathscr{M}}_i^*\mathscr{M}_0^*H(\phi,I_1,I_2,-G^*_{\epsilon,i}(\phi,I_1,I_2))=E
\end{equation}
when it is restricted in a neighborhood of $I''_i$ which covers the domain $\tilde\Sigma_{\epsilon,i} \times\mathbb{T}^3$ under the map $\bar{\mathscr{M}}_i$. The function $G^*_{\epsilon,i}$ defines a Hamiltonian system with two and half degrees of freedom where $\phi_3$ plays the role of time, there is a normally hyperbolic and weakly invariant cylinder $\hat\Pi^*_{\ell}$ for $G^*_{\epsilon,i}$ such that
$$
\hat\Pi_{\ell}=\bar{\mathscr{M}}_i^{-1}\{(\phi,I)\in\mathbb{T}^3\times\mathbb{R}^3:(\phi,I_1,I_2) \in\hat\Pi^*_{\ell},\ I_3=G^*_{\epsilon,i}(\phi,I_1,I_2)\}.
$$
Let $\hat\Gamma^*_{\sigma}=\bar{\mathscr{M}}_i^{-1}\hat\Gamma_{\sigma}$, they lie in the cylinder $\hat\Pi^*_{\ell}$ for which the modulus of continuity of (\ref{modulus}) still holds, probably with a larger coefficient $b^*_2\ge b_2$.

Let $\delta_i\downarrow 0$ be a countable sequence. Because of the modulus of continuity of (\ref{modulus}), it is proved in \cite{CY1,CY2} that, for any small $\delta>0$, it is an open-dense condition for $G^*_{\epsilon,i}$ in $C^r$-topology that the diameter of each connected component of the set
\begin{equation}\label{tobereviewd}
\mathcal{N}(c(\sigma),\check M)|_{\phi_3=0}\backslash(\mathcal{A}(c(\sigma),\check M)+\delta)|_{\phi_3=0}
\end{equation}
is not larger than $\delta_i$ (For the convenience of reader, we shall present the proof of this property in the appendix). The residual set is obtained by taking the intersection of the countably many open-dense sets.
Since $G^*_{\epsilon,i}$ solves Equation (\ref{reduction}), the perturbation $G^*_{\epsilon,i}\to G^*_{\epsilon,i}+G_{\delta}$ can be achieved by the perturbation $\bar{\mathscr{M}}_i^*\mathscr{M}_0^*H(I_1,I_2,I_3,\phi)\to \bar{\mathscr{M}}_i^*\mathscr{M}_0^*H(I_1,I_2,I_3+G_{\delta},\phi)$. Therefore, a set $\mathfrak{R}_{i,\ell}$ residual in $\mathfrak{B}_{\delta}$ is correspondingly obtained for $H$ such that for each $\epsilon P'\in \mathfrak{B}_{\delta}$ the condition ({\bf HA}) holds for cylinder $\tilde\Pi_{\ell}$. Taking the intersection $\cap\mathfrak{R}_{i,\ell}$ which is still residual in $\mathfrak{B}_{\delta}$.
\end{proof}


\begin{proof}[Proof of Theorem \ref{chain}]
Given $\delta>0$ there exists an integer $K=K(\delta)$ such that any point $p\in h^{-1}(E)$ falls into $\delta$-neighborhood of some resonant path $\Gamma_k$ with $|k|\le K$. Since there are finitely resonant circles, we take the intersection of finitely many cusp-residual sets obtained in Theorem \ref{liangouzao} which is still a cusp-residual set.
\end{proof}

\section{Proof of Theorem \ref{mainth}}

The proof of Theorem \ref{mainth} is simple once one has Theorem 5.1 of \cite{C17b} (Theorem \ref{liangouzao} here) and Theorem 3.1 of \cite{LC} as follows.

\noindent{\bf Theorem 3.1} of \cite{LC}. Suppose that there is a generalized transition chain $\Gamma$: $[0,1]\to H^1(\mathbb{T}^n,\mathbb{R})$ joining $c$ to $c'$. Then, there exists an orbit of the Lagrange flow $\Phi_L^t$ $d\gamma$: $\mathbb{R}\to T\mathbb{T}^n$ that connects the two Aubry sets: $\alpha(d\gamma)\subset\tilde{\mathcal{A}}(c)$ and $\omega(d\gamma)\subset\tilde{\mathcal{A}}(c')$.

\begin{proof}[Proof of Theorem \ref{mainth}]
If both $p$ and $\tilde c$ are treated as point in $\mathbb{R}^3$, $p(t)\equiv\tilde c$ holds along each $\tilde c$-minimal orbit $(p(t),q(t))$ of $\Phi_h^t$. By Lemma 2.1 of \cite{CY2}, the set of minimal orbits is upper-semi continuous with respect to perturbation. Therefore, along each orbit $(p(t),q(t))$ in the Aubry set $\tilde{\mathcal{A}}(\tilde c)$ one has $\|p(t)-\tilde c\|<\frac\delta2$ if $\epsilon>0$ is suitably small. So, an orbit connecting $\tilde{\mathcal{A}}(\tilde c^{\star})$ to $\tilde{\mathcal{A}}(\tilde c^*)$ with $\|\tilde c^{\star}-p^{\star}\|<\frac\delta2$ and $\|\tilde c^*-p^*\|<\frac\delta2$ satisfies the condition: some $t^{\star},t^*\in\mathbb{R}$ exist such that $\|p(t^{\star})-p^{\star}\|<\delta$ and $\|p(t^*)-p^*\|<\delta$.

According to Theorem 5.1 of \cite{C17b}, for each $\epsilon P\in\mathfrak{C}_{\epsilon_0}$, there is a transition chain that connects the first cohomology classes $\tilde c^{\star},\tilde c^*\in H^1(\mathbb{T}^3,\mathbb{R})$ satisfying the condition $\alpha(\tilde c^{\star})=\alpha(\tilde c^*)=E$, $\|p^{\star}-\tilde c^{\star}\|<\frac \delta2$ and $\|p^*-\tilde c^*\|<\frac \delta2$. In this case, one obtains from Theorem 3.1 of \cite{LC} an orbit connecting the Aubry set $\tilde{\mathcal{A}}(\tilde c^{\star})$ to $\tilde{\mathcal{A}}(\tilde c^*)$. It completes the proof of the theorem.
\end{proof}

Theorem \ref{mainth} is the elaboration and justification of the sentence in the end of Section 5 of \cite{C17b}: {\it the conjecture of Arnold diffusion for positive definite Hamiltonian turns out to be a theorem for $n=3$}. It is an immediate consequence of Theorem 5.1 of \cite{C17b}.

Theorem \ref{chain} leads to the existence of certain $\delta$-dense of the diffusion orbits, slightly stronger than Theorem \ref{mainth}. A diffusion orbit is said to be $\delta$-dense if it passes through $\delta$-neighborhood of any point $p\in H^{-1}(E)$.

\appendix
\renewcommand{\theequation}{A.\arabic{equation}}
\section{The proof of genericity}
\setcounter{equation}{0}
For the convenience of reader, we present a proof of the property (\ref{tobereviewd}) by applying the ideas and the techniques of \cite{CY1,CY2}. Another version appeared in \cite{BKZ}.

Given a Tonell $C^r$-Hamiltonian $H(p,q,t)$ where $(p,q,t)\in\mathbb{R}^2\times\mathbb{T}^3$, $r\ge 3$, we have a Tonelli Lagrangian $L(\dot q,q,t)=\max_p\langle \dot q,p\rangle -H(p,q,t)$. Let $\Phi_H^{t,t'}$ denote the Hamiltonian flow of $H$, it maps the initial value at the time-$t$-section to the time-$t'$-section. For $\Phi_H^{t,t'}$ we assume that
\begin{enumerate}
  \item there exists a normally hyperbolic and weakly invariant cylinder $\tilde\Pi$, which is a deformation of a standard cylinder $\{(p,q,t)\in\mathbb{R}^2\times\mathbb{T}^2\times\mathbb{T}:\ (p_1,q_1)=0.\}$;
  \item there is a continuous path $\Gamma_c$: $[0,1]\to H^1(\mathbb{T}^2,\mathbb{R})$ such that for any $c\in\Gamma_c$, the Aubry set entirely lies in the cylinder $\tilde\Pi$;
  \item for $c\in\Gamma_c$ if $\tilde{\mathcal{A}}(c)$ is an invariant 2-dimensional torus $\tilde\Upsilon_c$ lying in $\tilde\Pi$, it is a deformation of the torus $\{(p,q,t)\in\mathbb{R}^2\times\mathbb{T}^3:\ (p_1,q_1)=0,\ p_2=\mathrm{const}.\}$
\end{enumerate}
Let $\check{\pi}$: $\check{M}=\{q:\,q_1\,\mathrm{mod}\, 4\pi,q_2\,\mathrm{mod}\, 2\pi\}\to\mathbb{T}^2$ be a finite covering space of $\mathbb{T}^2$. The lift of $\tilde\Pi$ to $T^*\check{M}\times\mathbb{T}$ consists two copies, denoted by $\tilde\Pi_\ell$ and $\tilde\Pi_r$. For $c\in\Gamma_c$, if the Aubry set $\tilde{\mathcal{A}}(c)$ is an invariant torus $\tilde\Upsilon_c\subset\tilde\Pi$, its lift also consists of two components, $\tilde\Upsilon_{c,\ell}\subset\tilde\Pi_\ell$ and $\tilde\Upsilon_{c,r}\subset\tilde\Pi_r$. Let $\tilde\Pi_0$, $\tilde\Pi_{\ell,0}$, $\tilde\Pi_{r,0}$, $\tilde\Upsilon_{c,0}$, $\tilde\Upsilon_{c,\ell,0}$ and $\tilde\Upsilon_{c,r,0}$ denote the time-0-section of $\tilde\Pi$, $\tilde\Pi_{\ell}$, $\tilde\Pi_{r}$, $\tilde\Upsilon_{c}$, $\tilde\Upsilon_{c,\ell}$ and $\tilde\Upsilon_{c,r}$ respectively. Denote by $\pi$ the projection such that $\pi(p,q,t)=(q,t)$, let $\Upsilon=\pi\tilde\Upsilon$. Let $\Gamma_c^*\subset\Gamma_c$ such that
$$
\Gamma_c^*=\{c\in\Gamma_c:\,\tilde{\mathcal{A}}(c)\, \text{\rm is an invariant torus}\}.
$$

Let $B_D\in\mathbb{R}^2$ denotes a ball about the origin of radius $D$. We assume that $D>0$ is suitably large, such that for all $c\in\Gamma_c$ the $c$-minimal orbits of $H$ entirely stay in $B_D\times\mathbb{T}^3$. Let $\mathfrak{B}_{\epsilon}\subset C^r(B_D\times\mathbb{T}^3,\mathbb{R})$ denote a ball about the origin of radius $\epsilon>0$.

\begin{theo}\label{fundamental}
For any small $d_1>0$, there exists a set $\mathfrak{O}$ open-dense in $\mathfrak{B}_{\epsilon}$ such that for each $H_\delta\in\mathfrak{O}$, it holds for $H+H_{\delta}$ and simultaneously for all $c\in\Gamma^*_c$ that the diameter of each connected component of the set
$$
\mathcal{N}(c,\check M)|_{t=0}\backslash(\mathcal{A}(c,\check M)+\delta)|_{t=0}\ne\varnothing
$$
is not larger than $d_1$.
\end{theo}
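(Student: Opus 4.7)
The plan is to reduce the problem to a perturbative statement about the stable and unstable manifolds of the invariant $2$-torus $\tilde\Upsilon_c$ inside the cylinder $\tilde\Pi$, then to upgrade the pointwise generic statement in $c$ to a statement uniform over the continuum $\Gamma_c^*$ by a Baire-category argument.

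First I would fix $c\in\Gamma_c^*$ and recall the variational description of the M\~a\'ne set on the cover $\check M$: every element of $\mathcal{N}(c,\check M)\setminus\mathcal{A}(c,\check M)$ is represented by a $c$-semi-static orbit whose $\alpha$- and $\omega$-limits lie in the Aubry set $\tilde{\mathcal{A}}(c)$ lifted to $T^*\check M\times\mathbb{T}$. Because $\check M$ double-covers $\mathbb{T}^2$ in the $q_1$-direction, the lift $\tilde\Upsilon_{c,\ell}\cup\tilde\Upsilon_{c,r}$ has two components and the extra semi-static orbits are exactly the heteroclinics between the two lifts (the homoclinics downstairs become heteroclinics upstairs). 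Since $\tilde\Pi$ is normally hyperbolic, every such heteroclinic lies in $W^u(\tilde\Upsilon_{c,\ell})\cap W^s(\tilde\Upsilon_{c,r})$ (or the symmetric intersection). Thus $\pi(\mathcal{N}(c,\check M)\setminus\mathcal{A}(c,\check M))|_{t=0}$ is contained in the intersection locus $W^u_0\cap W^s_0$ of the time-$0$ sections.

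Next I would run a standard transversality argument. The stable and unstable manifolds of a KAM-type invariant torus inside a normally hyperbolic cylinder, restricted to the energy surface, are $2$-dimensional in the $3$-dimensional section. Using a cut-off perturbation $H_\delta$ supported in a small neighborhood of a single heteroclinic point, one can move $W^u_0$ relative to $W^s_0$ transversally to itself (this is the content of the perturbation lemmas in \cite{CY1,CY2} and the quoted Section~4.1 of \cite{C17b}); this is where the deformation modulus of continuity $\|\hat\Gamma_\sigma-\hat\Gamma_{\sigma'}\|_{C^0}\le b\sqrt{|\sigma-\sigma'|}$ used above is essential, because it controls how the manifolds depend on the torus parameter $\sigma$ and guarantees that an appropriately chosen perturbation acts on one torus without destroying the transversality previously gained on nearby ones. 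Transversality of $W^u_0$ and $W^s_0$ at a point implies that, in a neighborhood of that point, their intersection is a $1$-dimensional graph over a line in the section, and outside any $\delta$-neighborhood of $\mathcal{A}(c,\check M)|_{t=0}$ this graph meets only finitely many connected components whose diameters tend to $0$ as the perturbation refines; a countable intersection of the open-dense sets corresponding to a sequence of mesh parameters $d_1^{(n)}\downarrow 0$ gives, for one $c$, the desired bound on the diameter of each connected component of $\mathcal{N}(c,\check M)|_{t=0}\setminus(\mathcal{A}(c,\check M)+\delta)|_{t=0}$.

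The main obstacle is to make the argument hold \emph{simultaneously} for all $c\in\Gamma_c^*$, since $\Gamma_c^*$ is a continuum. Here I would imitate the strategy of \cite{CY1,CY2}: cover $\Gamma_c^*$ by a countable family of small sub-arcs, and on each sub-arc use the modulus of continuity (\ref{modulus}) together with the upper semi-continuity of the Aubry and M\~a\'ne sets to show that the property ``every connected component of $\mathcal{N}(c,\check M)|_{t=0}\setminus(\mathcal{A}(c,\check M)+\delta)|_{t=0}$ has diameter $\le d_1$'' is open in $c$ once it holds at one point, and is open in $H_\delta\in\mathfrak{B}_\epsilon$. Density is obtained sub-arc by sub-arc by the transversality perturbation above, and since countably many open-dense conditions intersect in a residual set we obtain a residual $\mathfrak{O}_{d_1}\subset\mathfrak{B}_\epsilon$; openness of the full statement (not merely residuality) comes from the upper semi-continuity of $\mathcal{N}(c,\check M)\setminus(\mathcal{A}(c,\check M)+\delta)$ in $(H,c)$ combined with the compactness of $\Gamma_c^*$. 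Finally, the Kuratowski--Ulam theorem converts the residual set of perturbations into the cusp-residual set used in Lemma \ref{lem7.4}.
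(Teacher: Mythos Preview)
Your outline has the right instincts but misses the key mechanism the paper uses to obtain genericity \emph{simultaneously} over the uncountable parameter set $\Gamma_c^*$, and your geometric framing in terms of transversal intersection of $W^u$ and $W^s$ is not quite the object under study.

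First, the paper does not argue via transversality of $W^u_0\cap W^s_0$. The set $\mathcal{N}(c,\check M)|_{t=0}\setminus(\mathcal{A}(c,\check M)+\delta)|_{t=0}$ is characterised as the global minimum set of a \emph{barrier function} $B^\ell_c(q,0)=u^-_{c,\ell}(q,0)-u^+_{c,r}(q,0)$ built from elementary weak KAM solutions. One then shows (Lemma~\ref{A5} in the appendix) that a small perturbation $H\to H+H_\delta$ can be chosen so that $B^\ell_c(\cdot,0)\to B^\ell_c(\cdot,0)+S_\delta$ for a prescribed bump $S_\delta$ supported in a small square, uniformly for all $c$ in a sub-interval $I_k$. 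The goal becomes: for generic $S_\delta$, the set $\mathrm{Argmin}(\mathbb{S}_{d_1}(q^*),B^\ell_c+S_\delta)$ does not project onto a full side of the square $\mathbb{S}_{d_1}(q^*)$, for every $c\in I_k\cap\Gamma_c^*$ at once. Your transversality picture would only control the full intersection $W^u\cap W^s$, not the subset where the barrier achieves its global minimum, and in any case transversality of two $2$-manifolds in a $4$-dimensional section gives isolated points for a \emph{single} $c$, which is not the issue.

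Second, and more seriously, your scheme ``open in $c$ once it holds at one point, then countable covering, then Baire'' does not close. The condition on the diameter of connected components of a minimum set is not upper-semicontinuous in $c$ in any usable way (components can merge), and even if it were, perturbing to fix one $c$ may destroy the property at nearby $c'$ in the same sub-arc. The paper resolves this with a \emph{box-dimension argument}: the modulus of continuity $|B^\ell_{c(\sigma)}-B^\ell_{c(\sigma')}|_{C^0}\le C(\sqrt{|\sigma-\sigma'|}+|c-c'|)$ (Lemma~\ref{moduluscontinuity}) shows that the family $\{B^\ell_c:c\in\Gamma_c^*\}$ has box-dimension at most $3$ in $C^0$. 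One then observes that the map $B\mapsto S_\delta$ sending a barrier to the (unique, modulo constants) perturbation in $\mathfrak{S}_1/\!\sim$ for which $B+S_\delta$ lies in the ``bad'' set $\mathfrak{Z}_1$ is distance-nonincreasing (inequality~(\ref{isometric})). Hence the bad perturbations form a set of box-dimension $\le 3$ inside the infinite-dimensional $\mathfrak{S}_1/\!\sim$; intersecting with any $4$-dimensional slice already shows density of the good set. This dimension-vs-dimension mechanism is the heart of the proof and is absent from your plan. Finally, open-density (not merely residuality) of $\mathfrak{O}$ comes from covering each annulus $N_k$ by \emph{finitely} many small squares and intersecting finitely many open-dense conditions; no Kuratowski--Ulam step is needed here.
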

Before the proof, we review some properties of the barrier functions. Starting from every point $x=(q,\tau)\in\mathbb{T}^3$ there exists at least one backward minimal curve $\gamma^-_{c,x}$: $(-\infty,\tau]\to\mathbb{T}^2$, namely $\gamma^-_{c,x}(\tau)=q$ and
$$
\begin{aligned}
&\int^{\tau}_tL(\dot\gamma^-_{c,x}(s),\gamma^-_{c,x}(s),s)-\langle c,\dot\gamma^-_{c,x}(s)\rangle ds\\
&\le\int^{\tau}_{t'}L(\dot\xi(s),\xi(s),s)-\langle c,\dot\xi(s)\rangle ds+(t-t')\alpha(c)
\end{aligned}
$$
holds for any absolutely continuous curve $\xi$: $[t',\tau]\to\mathbb{T}$ with $\xi(\tau)=\gamma^-_{c,x}(\tau)$, $\xi(t')=\gamma^-_{c,x}(t)$ with $t'=t\mod 2\pi$. It produces an orbit $(\dot\gamma^-_{c,x}(t)),\gamma^-_{c,x}(t))$ which approaches the Aubry set for $c$ as $t\to-\infty$. Similarly, starting from every point $x=(q,\tau)\in\mathbb{T}^3$ there exists one forward minimal curve $\gamma^+_{c,x}$: $[\tau,\infty)\to\mathbb{T}^3$, the orbit $(\dot\gamma^+_{c,x}(t)),\gamma^+_{c,x}(t))$ approaches the Aubry set for $c$ as $t\to\infty$.

According to the weak KAM theory, for almost every point $(q,t)\in\mathbb{T}^3$, the forward and backward orbit is uniquely determined by the forward and backward weak KAM solution respectively. The initial condition of the orbit is determined by the solution $u_c^{\pm}$ such that $\dot\gamma^\pm_{c,x}(\tau)=\partial_pH(\partial_q u^{\pm}_c(q,\tau)+c,q,\tau)$.

Given an Aubry class for $c\in\Gamma_c$ we can define its elementary weak KAM solution. In the covering space $\check{M}$, there are two Aubry classes for $c\in\Gamma_c$, $\tilde\Upsilon_{c,\ell}$ and $\tilde\Upsilon_{c,r}$. To define the elementary weak KAM solution $u^\pm_{c,\ell}$ for $\tilde\Upsilon_{c,\ell}$, we construct a perturbation $L(\dot q,q,t)\to L(\dot q,q,t)+V_\ell(q,t)$, where $V_\ell\ge 0$ and $\Upsilon_{c,r}\subset\mathrm{supp}V_\ell \subset(\Upsilon_{c,r}+\delta)=\{(q,t): \mathrm{dist}((q,t),\Upsilon_{c,r})\le\delta\}$. For $V_\ell\ne 0$, there exists a unique weak KAM solution $u^\pm_{c,V_\ell}$ modulo constant. For almost every point $(q,t)\in\check{M}\times\mathbb{T}$, the function $u^\pm_{c,V_\ell}$ determines a unique forward and backward minimal orbit $(\dot\gamma^\pm_{c,x}(t)),\gamma^\pm_{c,x}(t))$ such that $\dot\gamma^\pm_{c,x}(\tau)=\partial_pH(\partial_q u^{\pm}_{c,V_\ell}(q,\tau)+c,q,\tau)$ and the orbits approaches $\tilde\Upsilon_{c,\ell}$ as $t\to\pm\infty$ respectively. Let $V_{c,\ell}\downarrow 0$, the function $u^\pm_{c,V_\ell}$ converges to a function $u^\pm_{c,\ell}$ which is obviously a weak KAM solution for $H$, it is called elementary weak KAM solution for $\tilde\Upsilon_{c,\ell}$. The elementary weak KAM solution for $\tilde\Upsilon_{c,r}$ is defined in the same way, denoted by $u^\pm_{c,r}$.

For almost every point $(q,t)\in\check{M}\times\mathbb{T}\backslash\Upsilon_{c,\ell}$ the initial condition $(\partial_pu_{c,r}^\pm(q,t)+c,q,t)$ determines a forward (backward) $c$-minimal orbit that approaches $\tilde\Upsilon_{c,r}$ as $t\to\pm\infty$.
For points $(q,t)\in\check{M}\times\mathbb{T}\backslash\Upsilon_{c,r}$, $u_{c,\ell}^\pm$ determines $c$-minimal orbit approaching $\tilde\Upsilon_{c,\ell}$.

\begin{defi}
The barrier functions for $c\in\Gamma_c$ are defined as follows
$$
B^\ell_c(q,t)=u^-_{c,\ell}(q,t)-u^+_{c,r}(q,t), \qquad B^r_c(q,t)=u^-_{c,r}(q,t)-u^+_{c,\ell}(q,t).
$$
\end{defi}
In the following, we only study $B_c^\ell$. The arguments for $B_c^r$ are the same.
Since the backward weak KAM is semi-concave and the forward weak KAM is semi-convex, the barrier function is semi-concave. Therefore,
\begin{lem}
At each minimal point of $B^\ell_c$, both $u^-_{c,r}$ and $u^+_{c,\ell}$ are differentiable.
\end{lem}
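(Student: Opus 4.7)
The plan is to use the semi-concavity/semi-convexity structure of elementary weak KAM solutions together with a sandwich argument at a minimizer. First I would record the three structural facts that drive the proof: every backward elementary weak KAM solution $u^-_{c,*}$ is semi-concave on $\check M\times\mathbb T$; every forward elementary weak KAM solution $u^+_{c,*}$ is semi-convex; and any semi-concave function possessing a non-empty sub-differential at a point is automatically differentiable there (dually for semi-convex with a non-empty super-differential). The semi-concavity/convexity constant $C$ depends only on bounds on $\partial^2 H$ over the compact tube traversed by $c$-minimizers, which survives the limit $V_\ell\downarrow 0$ used to single out the elementary solutions $u^\pm_{c,\ell}$ and $u^\pm_{c,r}$.

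Next I would set up the sandwich. Let $x_0=(q_0,t_0)$ be a minimizer of $B^\ell_c$ and set $M=B^\ell_c(x_0)$. Minimality reads
\[
u^+_{c,r}(x)\ \le\ u^-_{c,\ell}(x)-M \qquad\text{for all }x,
\]
with equality at $x=x_0$, so the semi-convex function $u^+_{c,r}$ lies below the semi-concave function $u^-_{c,\ell}-M$ with touching graphs at $x_0$. Picking any $p\in\partial^- u^+_{c,r}(x_0)$ (non-empty by semi-convexity) and inserting the lower bound
\[
u^+_{c,r}(x)\ \ge\ u^+_{c,r}(x_0)+\langle p,x-x_0\rangle-C|x-x_0|^2
\]
into the sandwich produces
\[
u^-_{c,\ell}(x)\ \ge\ u^-_{c,\ell}(x_0)+\langle p,x-x_0\rangle-C|x-x_0|^2,
\]
so $p\in\partial^- u^-_{c,\ell}(x_0)$. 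Since $u^-_{c,\ell}$ is semi-concave it already has a non-empty super-differential at $x_0$; the simultaneous existence of a sub-differential and a super-differential forces $u^-_{c,\ell}$ to be differentiable at $x_0$, with $Du^-_{c,\ell}(x_0)=p$. The mirror argument — starting from a super-differential of the semi-concave $u^-_{c,\ell}$ and transporting it to a super-differential of $u^+_{c,r}$ via the same sandwich — yields differentiability of $u^+_{c,r}$ at $x_0$.

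Finally, the two companion functions $u^-_{c,r}$ and $u^+_{c,\ell}$ named in the lemma are handled in the same way by applying the sandwich to the partner barrier $B^r_c=u^-_{c,r}-u^+_{c,\ell}$; on the Aubry classes $\tilde\Upsilon_{c,\ell},\tilde\Upsilon_{c,r}$ all four elementary solutions coincide up to constants, so a minimizer of $B^\ell_c$ also serves as a minimizer of $B^r_c$ after the normalization is adjusted, and the preceding convex-analytic argument applies verbatim with the labels swapped. I expect the only genuinely technical step to be the uniformity of the semi-concavity constant $C$ as $V_\ell\downarrow 0$ in the limiting definition of the elementary weak KAM solutions, since $C$ must be controlled independently of the auxiliary potential; this is established by noting that $C$ depends only on bounds on $\partial^2 H$ over a compact region that is uniform in $V_\ell$. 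Everything else is a direct application of the Mather–Fathi weak KAM toolbox and requires no further hypothesis on $H$.
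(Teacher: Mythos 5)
Your central sandwich argument is correct and is essentially the paper's own proof in different clothing: the paper notes that $B^\ell_c$ is semi-concave (sum of the semi-concave $u^-_{c,\ell}$ and $-u^+_{c,r}$), that a semi-concave function is differentiable at a minimum (its super-differential there reduces to $\{0\}$), and that the super-differential of the sum is the sum of the super-differentials, so both summands must have singleton super-differentials at the minimizer; your transfer of a sub-gradient of the semi-convex $u^+_{c,r}$ through the touching inequality to a sub-gradient of the semi-concave $u^-_{c,\ell}$ (and the mirror step) reaches the same conclusion with the same toolbox, and also yields $Du^-_{c,\ell}=Du^+_{c,r}$ at the minimizer, which is what the next lemma actually uses. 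Your remark on the uniformity of the semi-concavity constant as $V_\ell\downarrow 0$ is fine.

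The flaw is in your final paragraph. The claim that a minimizer of $B^\ell_c$ "also serves as a minimizer of $B^r_c$ after the normalization is adjusted" is unjustified and in general false: $B^\ell_c=u^-_{c,\ell}-u^+_{c,r}$ and $B^r_c=u^-_{c,r}-u^+_{c,\ell}$ encode heteroclinic connections in opposite directions between $\tilde\Upsilon_{c,\ell}$ and $\tilde\Upsilon_{c,r}$, and the identification of the elementary solutions in (\ref{A10}) holds only in neighborhoods of the respective Aubry components, not globally, so the minimizing sets of the two barriers need not be related at all. That said, the subscripts in the statement are evidently a slip in the paper: its own proof, and the subsequent use of $\partial u^-_{c,\ell}=\partial u^+_{c,r}$ at global minima of $B^\ell_c$, show the intended functions are $u^-_{c,\ell}$ and $u^+_{c,r}$, which is exactly what your main argument covers. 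So you should delete the last paragraph (or, for the literal wording, run the identical sandwich at minimizers of $B^r_c$ instead of trying to transport them from $B^\ell_c$); with that removed, your proof is complete and matches the paper's.
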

\begin{proof}
By the definition, semi-concave function admits a local decomposition as the sum of a smooth function and a concave function. For a concave function $u$, one can define its sup-derivative $D^+u(x)$ at a point $x$ such that
$u(x+x')-u(x)\le\langle p,x'\rangle$ holds for any $p\in D^+u(x)$ which is a convex set. The function $u$ is differentiable at $x$ iff $D^+u(x)$ is a singleton.

Since $B^\ell_c$ is a sum of two semi-concave functions, its sup-derivative is the sum of the sup-derivatives of $u^-_{c,\ell}$ and $-u^+_{c,r}$. Therefore, $D^+B^\ell_c$ is a single point iff both $D^+u^-_{c,\ell}$ and $D^+(-u^+_{c,r})$ are singleton \cite{CaC}.
\end{proof}

\begin{lem}
If $(q,t)\in\check{M}\times\mathbb{T}\backslash((\Upsilon_{c,\ell}\cup\Upsilon_{c,r})+\delta)$ is a global minimal point of $B^\ell_c$, then $(q,t)\subset\mathcal{N}(c,\check{M})$, namely, passing through the point $(q,t)$ there is a $c$-semi-static curve in the covering space $\check{M}\times\mathbb{T}$.
\end{lem}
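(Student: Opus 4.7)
The plan is to exhibit a single orbit through $(q,t)$ that is $c$-calibrated on each side by the relevant elementary weak KAM solution, then concatenate. The preceding lemma supplies the starting point: at a minimum of $B^\ell_c$, both $u^-_{c,\ell}$ and $u^+_{c,r}$ are differentiable at $(q,t)$. Since $\check M\times\mathbb{T}$ has no boundary and $B^\ell_c = u^-_{c,\ell}-u^+_{c,r}$ is now differentiable at $(q,t)$, the first-order condition at a global minimum yields
$$
\partial_q u^-_{c,\ell}(q,t) \;=\; \partial_q u^+_{c,r}(q,t),
$$
and I set $p_0 = \partial_q u^-_{c,\ell}(q,t)+c$.

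Next I would propagate the phase point $(p_0,q,t)$ by the Hamiltonian flow to obtain an orbit $(p(s),q(s))$, $s\in\mathbb{R}$. For $s\le t$ this backward orbit is calibrated by $u^-_{c,\ell}$: because $u^-_{c,\ell}$ is the elementary backward weak KAM solution associated with $\tilde\Upsilon_{c,\ell}$ and is differentiable at $(q,t)$ with $\partial_q u^-_{c,\ell}(q,t)+c=p_0$, standard weak KAM theory gives
$$
u^-_{c,\ell}(q(t),t)-u^-_{c,\ell}(q(s),s)=\int_{s}^{t}\bigl(L(\dot q,q,\sigma)-\langle c,\dot q\rangle+\alpha(c)\bigr)\,d\sigma,
$$
and $(q(s),s)$ is asymptotic to $\tilde\Upsilon_{c,\ell}$ as $s\to-\infty$. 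Symmetrically, since the derivatives match, the same orbit is $u^+_{c,r}$-calibrated for $s\ge t$ and is asymptotic to $\tilde\Upsilon_{c,r}$ as $s\to+\infty$.

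Concatenating the two pieces yields a single entire orbit through $(q,t)$. To conclude it is $c$-semi-static (not just calibrated on each half-line separately), I would verify that for any $s_1<s_2$ the action-minimization equality holds: on segments entirely inside $[{-\infty},t]$ it follows from $u^-_{c,\ell}$-calibration, on segments entirely inside $[t,\infty)$ from $u^+_{c,r}$-calibration, and for $s_1<t<s_2$ from adding the two equalities and using $B^\ell_c(q,t)=u^-_{c,\ell}(q,t)-u^+_{c,r}(q,t)$ together with the domination inequalities $u^-_{c,\ell}\prec L-\langle c,\cdot\rangle+\alpha(c)$ and $u^+_{c,r}\prec L-\langle c,\cdot\rangle+\alpha(c)$. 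Hence the projection lies in $\check\pi\mathcal{N}(c,\check M)$ and passes through $(q,t)$.

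The only delicate point is the last step, the matching across $s=t$: one must ensure the piecewise calibration actually produces a true semi-static curve. The equality $\partial_q u^-_{c,\ell}(q,t)=\partial_q u^+_{c,r}(q,t)$ guarantees that the backward and forward calibrating velocities agree, so the concatenated curve is $C^1$ and solves the Euler--Lagrange equation at the join; combined with global action-minimization between arbitrary pairs of times (obtained by adding the two calibration identities and noting that the Peierls/Mañé inequalities become equalities at the minimum of $B^\ell_c$), this gives the $c$-semi-static property. Everything else is routine weak KAM bookkeeping.
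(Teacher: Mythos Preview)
Your proof is correct and follows essentially the same approach as the paper's: both use the preceding lemma to obtain differentiability of $u^-_{c,\ell}$ and $u^+_{c,r}$ at the minimum, deduce $\partial_q u^-_{c,\ell}=\partial_q u^+_{c,r}$, and then smoothly concatenate the backward $u^-_{c,\ell}$-calibrated curve with the forward $u^+_{c,r}$-calibrated curve into a single $c$-semi-static orbit. The paper's proof is simply terser, leaving the weak KAM bookkeeping you spell out (calibration on each half-line, matching across $s=t$) as understood.
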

\begin{proof}
By the definition, $\partial u^-_{c,\ell}=\partial u^+_{c,r}$ holds at a global minimal point of $B^\ell_c$, denoted by $x=(q,t)$. Therefore, the backward minimal curve $\gamma^-_{c,x}$ is joined smoothly to the forward minimal curve $\gamma^+_{c,x}$. They make up a $c$-semi-static curve for $\check{M}$.
\end{proof}

For a class $c\in\Gamma^*_c$, the covering space $\check{M}\times\mathbb{T}$ is divided into two annuli $\mathbb{A}_{c,r}$ and $\mathbb{A}_{c,\ell}$, bounded by $\Upsilon_{c,\ell}$ and $\Upsilon_{c,r}$. Clearly, one has $\check{\pi}\mathbb{A}_{c,r}=\check{\pi}\mathbb{A}_{c,\ell}$.
The set $\mathcal{N}(c,\check{M})\backslash\mathcal{A}(c,\check{M})$ contains $c$-minimal curves which cross the annulus from one side to another side or vice versa. Each of the curves produces a homoclinic orbit to the torus $\tilde\Upsilon_c$.

\begin{lem}\label{A3}
There is a finite partition of $\Gamma_c$: $\Gamma_c=\cup I_k$, each $I_k$ is a segment of $\Gamma_c$. For each $I_k$ there is an annulus $N_k\subset\mathbb{A}_{c,r}|_{t=0}$, two numbers $\delta>0$ and $d>0$ such that for each $c\in I_k\cap\Gamma^*_c$
\begin{enumerate}
  \item $\mathrm{dist}(N_k,\Upsilon_{c,\ell}\cup\Upsilon_{c,r})\ge\delta$;
  \item each curve $(\gamma(t),t)$ lying in $(\mathcal{N}(c,\check{M})\backslash\mathcal{A} (c,\check{M}))\cap\mathbb{A}_{c,r}$ passes through $N_k$;
  \item for each backward (forward) $c$-minimal curve $\gamma$, let $\{q_i=\gamma(2i\pi)\in N_k\}$, then $|q_i-q_j|\ge d$ if $i\ne j$.
\end{enumerate}
\end{lem}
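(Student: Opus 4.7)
The plan is to prove Lemma~\ref{A3} by a local-to-global compactness argument on $\Gamma_c$, with the local construction exploiting the NHIC geometry. First I would fix $c_0\in\Gamma_c$ and build a section adapted to the Aubry tori at $c_0$. The tori $\tilde\Upsilon_{c_0,\ell},\tilde\Upsilon_{c_0,r}$ are two disjoint invariant Lipschitz $2$-tori lying on $\tilde\Pi_\ell,\tilde\Pi_r$, and their time-$0$ slices are disjoint embedded Lipschitz circles in $\check M$ bounding the $2$-dimensional annulus $\mathbb{A}_{c_0,r}|_{t=0}$. I would take $N_{c_0}$ to be a sub-annulus of $\mathbb{A}_{c_0,r}|_{t=0}$ that separates the two boundary circles and sits at distance at least $3\delta_0$ from each of them, for some $\delta_0>0$; the width of $N_{c_0}$ is chosen large enough that, by the a priori bound on the speed of flow lines on the NHIC, any continuous trajectory $(\gamma(t),t)$ traversing $N_{c_0}$ must hit some sample $t=2i\pi$ inside $N_{c_0}$.

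With this section in hand I would verify conditions (1)--(3) at $c_0$ and then propagate them by continuity. Condition (1) holds by construction. For (2), a curve $(\gamma(t),t)\in(\mathcal{N}(c_0,\check M)\setminus\mathcal{A}(c_0,\check M))\cap \mathbb{A}_{c_0,r}$ has $\alpha$- and $\omega$-limits inside $\tilde\Upsilon_{c_0,\ell}\cup\tilde\Upsilon_{c_0,r}$, so its $\check M$-projection must cross the separating sub-annulus $N_{c_0}$, and by the width choice above it does so at some sample time $2i\pi$. For (3), normal hyperbolicity of $\tilde\Upsilon_{c_0}$ on $\tilde\Pi$ forces $\gamma$ to approach $\tilde\Upsilon_{c_0}$ exponentially on the asymptotic side, so only finitely many indices $i$ satisfy $\gamma(2i\pi)\in N_{c_0}$; non-periodicity of $\gamma\in\mathcal{N}(c_0)\setminus\mathcal{A}(c_0)$ makes the samples $\{q_i\}$ pairwise distinct, and uniform positive separation $d_{c_0}>0$ follows from compactness of the closed set of backward semi-static curves meeting $N_{c_0}$. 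The extension to a neighborhood $V_{c_0}\subset\Gamma_c$ uses $C^0$-continuity of $c\mapsto\tilde\Upsilon_c$ on $\Gamma^*_c$ (a consequence of upper semi-continuity of the Mather set combined with the Lipschitz-graph structure of $\tilde\Upsilon_c$ over the NHIC base) and upper semi-continuity of the Ma\~n\'e set (Lemma~2.1 of \cite{CY2}); together these imply that $N_{c_0}$ retains (1)--(3) with slightly degraded constants $\delta_{c_0}/2,\,d_{c_0}/2$ for all $c\in V_{c_0}\cap\Gamma^*_c$. Compactness of $\Gamma_c$ then yields a finite subcover $\{V_{c_0^k}\}$; extracting a disjoint refinement by segments gives $\{I_k\}$, and setting $N_k=N_{c_0^k}$, $\delta=\min_k\delta_{c_0^k}/2$, $d=\min_k d_{c_0^k}/2$ completes the construction.

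The main obstacle will be the uniform lower bound $d>0$ in condition (3): ensuring positive separation of consecutive returns uniformly over all backward/forward semi-static curves and all $c$ in the chosen patch $I_k$. The difficulty is that the family of backward $c$-minimal curves meeting $N_k$ is parameterized by a point on the one-dimensional unstable lamination of $\tilde\Upsilon_c$ and by $c$ itself, and a priori two nearby returns along this lamination could accumulate. The resolution should exploit the Lipschitz dependence of the NHIC stable/unstable foliations on $c$ and the fact that the time-$2\pi$ first-return map to $N_k$ restricted to the unstable lamination is a local diffeomorphism whose derivative is uniformly separated from the identity by the hyperbolic expansion rate of $\tilde\Upsilon_c$. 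A careful invariant-manifold argument of the kind already used in \cite{CY1,CY2} should close the gap.
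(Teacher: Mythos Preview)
Your proposal is correct and follows the same overall architecture as the paper's proof: fix $c_0$, build a separating sub-annulus $N_{c_0}$ at positive distance from the Aubry tori and wide enough (relative to the uniform speed bound) that any crossing hits an integer time, then globalize by upper semi-continuity of the Ma\~n\'e set and compactness of $\Gamma_c$.

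The one place you diverge from the paper is your treatment of condition~(3), which you flag as the ``main obstacle'' and propose to handle via the stable/unstable lamination structure and the expansion rate of the return map. The paper's argument is considerably lighter: if $\gamma$ is a backward $c$-minimal curve with $\gamma(2i\pi)=\gamma(2j\pi)=q$ for $i\ne j$, then the calibration identity $u^-_c(\gamma(t))-u^-_c(\gamma(t'))=A_c[\gamma|_{[t',t]}]$ forces the closed loop $\gamma|_{[2i\pi,2j\pi]}$ to have vanishing $c$-action, so its periodic extension is static and $q$ lies in the projected Aubry set---contradicting $q\in N_k$. Thus the map $(c,\gamma,i,j)\mapsto|q_i-q_j|$ is strictly positive; since $I_k$ and $N_k$ are compact, the set of backward $c$-minimal curves with a sample in $N_k$ is compact in $C^0_{\mathrm{loc}}$ (Tonelli plus uniform speed bound plus closedness of semi-static curves under limits), and the number of returns to $N_k$ is uniformly bounded (curves approach the Aubry set, which is $\delta$-separated from $N_k$), so a uniform $d>0$ drops out of compactness directly. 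Your invariant-manifold route would also work, but it is heavier machinery than the problem requires; I would recommend replacing your last paragraph with the paper's compactness argument.
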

\begin{proof}
Because $\Gamma_c$ is compact, the speed of each $c$-minimal orbit is uniformly upper bounded for all $c\in\Gamma_c^*$. Given an integer $m>0$, there will be small $\delta_c>0$ such that the period for each $c$-minimal curve to cross the annulus $N_c=\mathbb{A}_{c,r}\backslash((\Upsilon_{c,\ell}\cup\Upsilon_{c,r})+\delta_c)$ is not shorter than $4m\pi$. Because of the upper semi-continuity of Ma\~n\'e set in $c$, there exists some $\delta'_c>0$ such that $\Upsilon_{c',\ell}\cup\Upsilon_{c',r}$ does not touch $N_c$ and the period for each $c'$-minimal curve to cross the annulus $N_c$ is not shorter than $2m\pi$ provided $|c-c'|\le\delta'_c$ and $c'\in\Gamma^*_c$. The first two items are then proved if we notice $\Gamma^*_c$ is compact.

For the third one, we notice that the condition $\gamma(2i\pi)=\gamma(2j\pi)$ for $i\ne j$ implies that $\gamma$ is a curve in the Aubry set. It contradicts the assumption. Since both $N_k$ and $I_k$ are compact, such a constant $d>0$ exists.
\end{proof}

By the definition, the Aubry set $\tilde{\mathcal{A}}(c)$ is an invariant torus if $c\in\Gamma^*_c$. Its time-$2\pi$-section is an invariant circle lying in the cylinder. Fix one of the circles, we are able to parameterize other circle by the algebraic area bounded by the circles. Let us consider the twist map on the standard cylinder first. It is well-known that all invariant circles are Lipschitz with the constant $C_L$ which depends on the twist condition only. Treating each circle as the graph of some periodic function and fixing one as $\tilde\Upsilon_{0,0}$ one can parameterize another circle by the algebraic area bounded by these two circles. The annulus bounded by the circle $\tilde\Upsilon_{\sigma,0}$ and $\tilde\Upsilon_{\sigma',0}$ contains a diamond, the height of the vertical diagonals is $\max_q|\tilde\Upsilon_{\sigma,0}(q)-\tilde\Upsilon_{\sigma',0}(q)|$ and the length of the horizontal diagonal is not shorter than $\frac 1{C_L}\max_q|\tilde\Upsilon_{\sigma,0}(q)-\tilde\Upsilon_{\sigma',0}(q)|$. So, one has
$$
\max_q|\tilde\Upsilon_{\sigma,0}(q)-\tilde\Upsilon_{\sigma',0}(q)|\le\sqrt{2C_L|\sigma-\sigma'|}.
$$
A non-standard cylinder can be regarded as the image of the standard cylinder under some diffeomorphism. So, the $\frac 12$-H\"older continuity still holds, refer to the argument for Formula (\ref{image}).

Each invariant circle corresponds to a unique $c\in\Gamma_c$ such that the Aubry set is the circle. The parameter $\sigma$ is usually defined on a Cantor set, denoted by $\Sigma$. Because of the normal hyperbolicity of the cylinder, we have

\begin{lem}\label{moduluscontinuity}
For $\sigma,\sigma'\in\Sigma$, let $c=c(\sigma)$, $c'=c(\sigma')$. If $c,c'\in I_k$ and $q\in N_k$, then
$$
|B^\ell_{c(\sigma)}(q,0)-B^\ell_{c(\sigma')}(q,0)|\le C(\sqrt{|\sigma-\sigma'|}+|c-c'|).
$$
\end{lem}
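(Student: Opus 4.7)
The strategy is to reduce the bound to separate modulus-of-continuity estimates for the two elementary weak KAM solutions comprising $B^\ell_c=u^-_{c,\ell}-u^+_{c,r}$; I focus on $u^-_{c,\ell}$, since the argument for $u^+_{c,r}$ is entirely symmetric. The two geometric inputs I will exploit are the $\tfrac12$-Hölder dependence of the invariant circles on $\sigma$ recorded just before the lemma, and the normal hyperbolicity of $\tilde\Pi$, which supplies both the exponential approach of backward calibrating curves to the Aubry tori and the Lipschitz dependence of the tori and of the rotation-number map $c\mapsto\omega_c$ on the cohomology parameter along $I_k$.

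Fix $c=c(\sigma)$, $c'=c(\sigma')\in I_k$, and let $\gamma^-$ be the backward calibrating curve for $u^-_{c,\ell}$ through $(q,0)$, with the normalization $u^-_{c,\ell}\equiv 0$ on $\tilde\Upsilon_{c,\ell}$. For a parameter $T>0$ to be optimized later, I will build an admissible comparison curve $\tilde\gamma\colon(-\infty,0]\to\check M$ for $u^-_{c',\ell}(q,0)$ by taking $\gamma^-$ itself on $[-T,0]$; inserting a connector on $[-T-1,-T]$ from a point of $\tilde\Upsilon_{c',\ell}$ to $\gamma^-(-T)$ whose spatial displacement is at most $b_1\sqrt{|\sigma-\sigma'|}+C_0 e^{-\lambda T}$ (using the Hölder estimate for the tori and the exponential approach of $\gamma^-$); and extending backward along an orbit of $\tilde\Upsilon_{c',\ell}$ on $(-\infty,-T-1]$, which by normalization contributes zero action. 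Evaluating the action of $\tilde\gamma$ for the Lagrangian shifted by $c'$ and $\alpha(c')$ yields
\[
u^-_{c',\ell}(q,0)-u^-_{c,\ell}(q,0)\le C_1\bigl(\sqrt{|\sigma-\sigma'|}+e^{-\lambda T}\bigr)+\Delta_T,
\]
with
\[
\Delta_T:=\int_{-T}^0\langle c-c',\dot\gamma^-\rangle\,d\tau+T\bigl(\alpha(c')-\alpha(c)\bigr);
\]
the reverse inequality then follows by exchanging $c$ and $c'$.

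Because $\alpha$ is constant on $\Gamma_c$, $\alpha(c')-\alpha(c)=0$ and the second summand of $\Delta_T$ vanishes. Splitting $\dot\gamma^-(\tau)=\omega_c+\eta(\tau)$, with $\omega_c\in\partial\alpha(c)$ the rotation vector of $\tilde\Upsilon_{c,\ell}$ and $|\eta(\tau)|\le C_2 e^{-\lambda|\tau|}$ by normal hyperbolicity, the oscillation integral $\int_{-T}^0\langle c-c',\eta\rangle\,d\tau$ is $O(|c-c'|)$ uniformly in $T$. There remains the drift $T\langle c-c',\omega_c\rangle$; to control it I combine the convexity of $\alpha$ with $\alpha(c)=\alpha(c')$, which forces $\langle c-c',\omega_c\rangle\le 0$ and $\langle c'-c,\omega_{c'}\rangle\le 0$, together with the Lipschitz estimate $|\omega_c-\omega_{c'}|\le C|c-c'|$ on the NHIC. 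This yields the quadratic bound $|\langle c-c',\omega_c\rangle|=|\langle c-c',\omega_c-\omega_{c'}\rangle|\le C|c-c'|^2$, hence $|T\langle c-c',\omega_c\rangle|\le CT|c-c'|^2$. Choosing $T=\max\{-\lambda^{-1}\log|c-c'|,\,-(2\lambda)^{-1}\log|\sigma-\sigma'|\}$ makes $CT|c-c'|^2=o(|c-c'|)$ and $e^{-\lambda T}=O(|c-c'|+\sqrt{|\sigma-\sigma'|})$, and combining with the earlier error terms produces the claimed bound.

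The main obstacle is precisely the extraction of the quadratic cancellation in $\Delta_T$; without it the bound is only $O(T|c-c'|)$, which diverges as $T\to\infty$. The cancellation rests essentially on two ingredients acting in concert---the constancy of $\alpha$ along $\Gamma_c$ (ensured by the construction of the diffusion path as a level set of $\alpha$) and the Lipschitz regularity of $c\mapsto\omega_c$ across $I_k$ (from normal hyperbolicity and the compact smooth parametrization of Aubry tori on the cylinder)---so one must take care that the lemma is applied only to $c,c'$ on a common $\alpha$-level. Once this is in place, the remaining verifications---fixing a common normalization of the elementary weak KAM solutions pinned on their Aubry tori, the bounded-action estimate for the joining segment, and the exponential tail control---are routine and mirror the weak KAM comparisons of \cite{CY1,CY2}.
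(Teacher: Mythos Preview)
Your approach is genuinely different from the paper's, and as written it has a real gap.

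The paper does \emph{not} send the time to infinity. It observes (Lemma~\ref{A3}) that there is a \emph{fixed} integer $K$, uniform over $c\in I_k$ and $q\in N_k$, such that every backward $c$-minimal curve $\gamma^-_{c,q}$ satisfies $\gamma^-_{c,q}(-2K\pi)\in\Upsilon_{c,\ell}+\delta$. Using the calibration identity for $u^-_{c,\ell}$ and the sub-calibration inequality for $u^-_{c',\ell}$ along the \emph{same} finite piece $\gamma^-_{c,q}|_{[-2K\pi,0]}$, one gets immediately
\[
u^-_{c',\ell}(q,0)-u^-_{c,\ell}(q,0)\;\le\;2K\pi(C_1+C_2)|c-c'|+\bigl(u^-_{c',\ell}-u^-_{c,\ell}\bigr)(\gamma^-_{c,q}(-2K\pi),0),
\]
with $C_1$ a Lipschitz constant of $\alpha$ and $C_2$ a uniform speed bound. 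Because $K$ is fixed, no cancellation is needed in the drift term. The remaining boundary term lives in the $\delta$-neighborhood of the cylinder, where (via (A.\ref{A10})) the elementary solutions agree with the ordinary $u^\pm_c$, which are $C^{1,1}$ there; their gradients $\partial_q u^\pm_c+c$ are precisely the stable/unstable fibers of $\tilde\Pi$ and hence depend $\tfrac12$-H\"older on $\sigma$. This yields the $\sqrt{|\sigma-\sigma'|}$ contribution directly.

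Your argument, by contrast, lets $T\to\infty$ and must then control $T\bigl(\alpha(c')-\alpha(c)\bigr)$ and $T\langle c-c',\omega_c\rangle$. You dispose of the first by asserting that $\alpha$ is constant on $\Gamma_c$; but this is \emph{not} among the hypotheses of the appendix, and in the actual application (the reduced time-periodic Hamiltonian $G^*_{\epsilon,i}$ of \eqref{reduction}) the two-dimensional $\alpha$-function is certainly not constant along the channel---the third coordinate $c_3=-\alpha(c)$ varies so that $\tilde c$ stays on $\tilde\alpha_H^{-1}(E)$. So the step ``$\alpha(c')-\alpha(c)=0$'' fails exactly where the lemma is used. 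Your second reduction, the quadratic bound $|\langle c-c',\omega_c\rangle|\le C|c-c'|^2$, rests on $|\omega_c-\omega_{c'}|\le C|c-c'|$, i.e.\ on $\alpha$ being $C^{1,1}$ along $I_k$; this is not assumed and is generally false for Tonelli systems. (One can salvage a version of the cancellation: convexity alone gives $0\le\langle c-c',\omega_c\rangle+\alpha(c')-\alpha(c)\le\langle c-c',\omega_c-\omega_{c'}\rangle$, so the two problematic terms should be kept \emph{together} rather than split---but you would still need a modulus for $\omega_c-\omega_{c'}$ that you have not established.) The upshot is that the paper's fixed-$K$ argument sidesteps both issues entirely; if you want to keep the asymptotic-comparison route, you must drop the $\alpha$-constant claim and supply an honest estimate for $\langle c-c',\omega_c-\omega_{c'}\rangle$, or, much more simply, observe that a uniform finite $K$ already suffices and no optimization in $T$ is needed.
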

\begin{proof}
For $c=c(\sigma)$ with $\sigma\in\Sigma$, the minimal measure is uniquely ergodic. There is only one pair of weak KAM solutions $u^{\pm}_c$ for the configuration space $\mathbb{T}^2$. With respect to the covering space $\check{M}$, we have introduced the elementary weak KAM solutions $u^{\pm}_{c,\ell}$ and $u^{\pm}_{c,r}$. Since the projection $\check\pi$ is an injection when it is restricted in the neighborhood $\Upsilon_{c,\imath}+\delta$ for $\imath=\ell,r$ respectively, for $(q,t)\in\Upsilon_{c}+\delta$ one has
\begin{equation}\label{A10}
u^{\pm}_{c,\ell}(\check\pi^{-1}(q,t)\cap(\Upsilon_{c,\ell}+\delta)) =u^{\pm}_{c,r}(\check\pi^{-1}(q,t)\cap(\Upsilon_{c,r}+\delta))
=u^\pm_c(q,t).
\end{equation}

By the definition of weak KAM solutions, for any $t'<t$ one has
$$
u^-_{c,\ell}(\gamma(t),t)-u^-_{c,\ell}(\gamma(t'),t')\le\int_{t'}^t(L(\dot\gamma(s),\gamma(s),s)-\langle c,\dot\gamma(s)\rangle)ds+(t-t')\alpha(c)
$$
which becomes an equality when $\gamma$ is a backward $c$-semi static curve. Assume $\gamma^-_{c,q}$ is a backward $c$-minimal curve such that $\gamma^-_{c,q}(0)=q$, we have
$$
\begin{aligned}
u^-_{c,\ell}(q,0)-u^-_{c,\ell}(\gamma^-_{c,q}(-2K\pi),0)=&\int_{-2K\pi}^0(L(\dot\gamma^-_{c,q}(s), \gamma^-_{c,q}(s),s)-\langle c,\dot\gamma^-_{c,q}(s)\rangle)ds\\
&+2K\pi\alpha(c),\\
u^-_{c',\ell}(q,0)-u^-_{c',\ell}(\gamma^-_{c,q}(-2K\pi),0)\le&\int_{-2K\pi}^0(L(\dot\gamma^-_{c,q}(s), \gamma^-_{c,q}(s),s) -\langle c',\dot\gamma^-_{c,q}(s)\rangle)ds\\
&+2K\pi\alpha(c').
\end{aligned}
$$
Since $N_k$ keeps away from $\Upsilon_{c,\ell}$, some $K>0$ exists such that for each $q\in N_k$, $c\in I_k$ and $q\in N_k$ one has $\gamma^-_{c,q}(-2K\pi)\in(\Upsilon_{c,\ell}+\delta)$.
Since $c$ and $c'$ are located in a compact set $\Gamma_c$, the $\alpha$ function is convex and finite everywhere, there is some constant $C_1$ such that $|\alpha(c')-\alpha(c)|\le C_1|c-c'|$. Let $\bar\gamma^-_{c,q}$ be the lift of $\gamma^-_{c,q}$ to the universal covering space, one has $|\bar\gamma^-_{c,q}(0)-\bar\gamma^-_{c,q}(-2K\pi)|\le 2C_2K\pi$.
$$
\begin{aligned}
&u^-_{c',\ell}(q,0)-u^-_{c,\ell}(q,0)-(u^-_{c',\ell}(\gamma^-_{c,q}(-2K\pi),0)-u^-_{c,\ell} (\gamma^-_{c,q}(-2K\pi),0))\\
&\le 2K\pi(C_1+C_2)|c-c'|.
\end{aligned}
$$
In the same way one can also obtain
$$
\begin{aligned}
&u^-_{c,\ell}(q,0)-u^-_{c',\ell}(q,0)-(u^-_{c,\ell}(\gamma^-_{c',q}(-2K\pi),0)-u^-_{c',\ell} (\gamma^-_{c',q}(-2K\pi),0))\\
&\le 2K\pi(C_1+C_2)|c-c'|.
\end{aligned}
$$
For $u^+_{c,r}$, $u^+_{c'r}$ we also have similar inequalities. Therefore, it follows from (\ref{A10}) that some points $(q_\ell,0),(q_r,0)\in\Upsilon_{c}+\delta$ exist such that
$$
\begin{aligned}
|B_c(q,0)-B_{c'}(q,0)|&\le 4K\pi(C_1+C_2)|c-c'|\\
&+|u^-_c(q_\ell,0)-u^-_{c'}(q_\ell,0)-u^+_c(q_r,0)+u^+_c(q_r,0)|.
\end{aligned}
$$
By the assumption, both $u_c^-$ and $u_c^+$ are $C^{1,1}$ when they are restricted in $\Upsilon_c+\delta$. Due to the normal hyperbolic property, each $(p,q)\in\tilde\Pi_{0}$ has its stable and unstable fiber which is $C^{r-1}$-smoothly depends on the point $(p,q)$. The fibers are defined by $\partial_q u^{\pm}_c+c$ and one has that
$$
|\partial _qu^\pm_{c}-\partial_qu^\pm_{c'}+c-c'|\le C_3\sqrt{|\sigma-\sigma'|}
$$
holds for some constant $C_3>0$, independent of $c,c'$. Combining above two inequalities, one finishes the proof of the lemma.
\end{proof}

We consider the $c$-minimal curves for $c\in I_k$. Because $I_k$ is compact, there exists a constant $D>0$ such that $|\dot\gamma(t)|\le D$ holds for any $c$-minimal curve with $c\in I_k$.
Let $\Omega_{\tau}=\{(q',q)\in\mathbb{R}^2\times\mathbb{R}^2:|q'-q|\le 2D\tau\ \mathrm{with}\ \tau>0\}$. We consider the action
$$
S_{-\tau}(q',q)=\min_{\stackrel {\xi(-\tau)=q'}{\scriptscriptstyle \xi(0)=q}}\int_{-t}^0 L(\dot\xi(s),\xi(s),s)ds.
$$
For suitably small $\tau>0$, there exists a unique minimal curve if $(q',q)\in\Omega_\tau$. Indeed, because $L$ is Tonelli, the second derivative of any solution $q(t)$ of the Euler-Lagrange equation is bounded by $|\ddot q|\le |\partial_{\dot q\dot q}L^{-1}(\partial_qL-\partial^2_{\dot qq}L\dot\gamma-\partial_{\dot qt}L)|$. Recall the Taylor formula
$$
q(t')=q(t)+\dot q(t)(t'-t)+\frac 12\ddot q(\lambda t+(1-\lambda)t')(t'-t)^2
$$
holds for small $|t'-t|$, where both entries of $\lambda\in\mathbb{R}^2$ takes value in $[0,1]$. Therefore, for small $|t'-t|$, there is an one to one correspondence the initial speed $\dot\gamma(t)$ and the end point $\gamma(t')$. In this case, $S_{-\tau}(q',q)$ is $C^r$-differentiable in both $q'$ and $q$. By the definition of weak KAM, for $c\in I_k$ one has
$$
u^-_c(q,0)=\min_{q'\in\mathbb{T}^2,\,|q'-q|\le 2D\tau}(S_{-\tau}(q',q)-\langle c,q-q'\rangle+u^-_c(q',-\tau))
$$
We extend $S_{-\tau}$ smoothly to the whole $\mathbb{R}^2\times\mathbb{R}^2$ such that it satisfies the twist condition. Recall the quantities defined in Lemma \ref{A3} such as the annulus $N_k$ and the number $d>0$.
\begin{lem}\label{A5}
Let $S_\delta(q)$ be a $C^r$-function such that $\max\{|q-q'|:q,q'\in\mathrm{supp} S_\delta\}\le d$, $\mathrm{supp} S_\delta\subset N_k$ and $\|S_\delta\|_{C^r}$ is sufficiently small. Then, restricted on $I_k$, there exists a perturbation $H\to H'=H+H_\delta$ such that $\|H_\delta\|_{C^r}$ is small and the barrier function is subject to a translation
$$
B_c(q,0)\to B_c(q,0)+S_\delta(q) \qquad \forall\ c\in I_k,\ q\in\mathrm{supp} S_\delta.
$$
\end{lem}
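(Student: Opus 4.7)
The plan is to construct the perturbation explicitly as
\begin{equation*}
H_\delta(q,t) = -S_\delta(q)\rho(t),
\end{equation*}
where $\rho:\mathbb{T}\to\mathbb{R}$ is a smooth non-negative bump supported in the interval $(-\tau,0)\subset\mathbb{T}=\mathbb{R}/2\pi\mathbb{Z}$, normalized by $\int_0^{2\pi}\rho(t)\,dt = 1$, with $\tau>0$ chosen small. This modifies the Lagrangian by $L\to L+S_\delta(q)\rho(t)$. Since $\mathrm{supp}\,S_\delta\subset N_k$ lies at distance at least $\delta$ from $\Upsilon_{c,\ell}\cup\Upsilon_{c,r}$ by Lemma \ref{A3}(1), taking $\|S_\delta\|_{C^r}$ small enough ensures that the perturbation vanishes in a neighborhood of $\tilde\Upsilon_c$, so that the invariant tori persist as the Aubry sets for all $c\in I_k$, and that the modulus of continuity in Lemma \ref{moduluscontinuity} survives.

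The key step is the first-order computation of the change of $u^-_{c,\ell}$ and $u^+_{c,r}$ at $q\in\mathrm{supp}\,S_\delta$. For the unperturbed backward $c$-minimal curve $\gamma^-$ with $\gamma^-(0)=q$ approaching $\tilde\Upsilon_{c,\ell}$, the first variation of the action gives
\begin{equation*}
u^{-,\mathrm{new}}_{c,\ell}(q,0) - u^{-,\mathrm{old}}_{c,\ell}(q,0) = \int_{-\infty}^{0} S_\delta(\gamma^-(s))\,\rho(s)\,ds + O(\|S_\delta\|_{C^r}^2).
\end{equation*}
The support of $\rho$ inside $(-\infty,0]$ is the disjoint union of intervals $(-2j\pi-\tau,-2j\pi)$ for $j=0,1,2,\ldots$. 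Setting $q_{-j}=\gamma^-(-2j\pi)$, Lemma \ref{A3}(3) yields $|q_{-j}-q|\geq d$ for $j\geq 1$ whenever $q_{-j}\in N_k$; in every case $q_{-j}\notin\mathrm{supp}\,S_\delta$, since the latter is contained in $N_k$ and has diameter at most $d$. Hence only the interval $(-\tau,0)$ contributes, and for small $\tau$ the integral equals $S_\delta(q)+O(\tau\|S_\delta\|_{C^1})$. Meanwhile $\rho$ vanishes on $[0,2\pi-\tau]$, and the forward curve $\gamma^+$ with $\gamma^+(0)=q$ satisfies $\gamma^+(2j\pi)\notin\mathrm{supp}\,S_\delta$ for $j\geq 1$ by the same reasoning, so $u^+_{c,r}(q,0)$ is unchanged to leading order. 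Consequently $B_c^\ell(q,0)$ shifts by $S_\delta(q)+O(\tau+\|S_\delta\|_{C^r}^2)$; the barrier $B_c^r$ is treated symmetrically by exchanging the roles of $\ell$ and $r$.

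To convert this approximate shift into the exact translation claimed, I will view the map $\Phi: S\mapsto (B_c^{\mathrm{new}}-B_c^{\mathrm{old}})|_{\mathrm{supp}\,S_\delta}$, with associated perturbation $H_\delta=-S\rho$, as a map between small balls in the $C^r$-topology. Its linearization at $S=0$ is the identity up to an $O(\tau)$ error and is therefore invertible for $\tau$ sufficiently small, so the implicit function theorem (or a contraction-mapping argument correcting the quadratic error) produces a unique $S$ realizing any prescribed $S_\delta$ exactly. The hard part will be making the above estimates uniform in $c\in I_k$ and $q\in\mathrm{supp}\,S_\delta$: this requires the compactness of $I_k$, the normal hyperbolicity of the cylinder $\tilde\Pi$ (which controls the H\"older regularity of the stable and unstable fibers, cf.\ Lemma \ref{moduluscontinuity}), and crucially the uniform separation $|q_i-q_j|\geq d$ from Lemma \ref{A3}(3), which guarantees that each $c$-minimal curve meets $\mathrm{supp}\,S_\delta$ at most once per period $2\pi$, independently of $c$.
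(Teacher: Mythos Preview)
Your approach has a genuine gap in the implicit function theorem step. The map $\Phi_c: S \mapsto (B_c^{\mathrm{new}}-B_c^{\mathrm{old}})|_{\mathrm{supp}\,S_\delta}$ depends on $c$ through the minimal curve $\gamma^-_c$: your own first-order term $\int_{-\tau}^0 S(\gamma_c^-(s))\rho(s)\,ds$ already carries a $c$-dependent remainder $O(\tau\|S\|_{C^1})$, and the quadratic correction does as well. Inverting $\Phi_c$ by contraction would yield, for each fixed $c$, a perturbation $S=S(c)$ realizing the prescribed shift for \emph{that} $c$; but the lemma demands a \emph{single} Hamiltonian perturbation $H_\delta$ that produces the \emph{same} translation $S_\delta(q)$ for \emph{every} $c\in I_k$. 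Viewed correctly, your map goes from functions of $q$ alone into functions of $(q,c)$, and there is no reason its image should contain the $c$-independent function $S_\delta(q)$. Uniformity of estimates in $c$ does not address this --- it is an overdetermination, not a regularity issue. A secondary difficulty is that $B_c$ is only semi-concave, so the smoothness of $\Phi_c$ needed for a Banach-space IFT is not available without further work.

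The paper sidesteps both problems by perturbing at the level of the generating function of the time-$(-\tau)$-to-$0$ map: it replaces $S_{-\tau}(q',q)$ by $S_{-\tau}(q',q)+S_\delta(q)$ and then invokes Weinstein's theorem to realize this as a $C^r$-small Hamiltonian perturbation. Because $S_\delta(q)$ depends only on the terminal point $q$, it pulls out of the minimum defining $u^-_{c,\ell}(q,0)$, giving
\[
u^{-,S_\delta}_{c,\ell}(q,0)=S_\delta(q)+\min_{q'}\bigl(S_{-\tau}(q',q)-\langle c,q-q'\rangle+u^-_{c,\ell}(q',-\tau)\bigr)=S_\delta(q)+u^-_{c,\ell}(q,0)
\]
\emph{exactly}, with no dependence on $c$ and no approximation to correct. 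The non-return property from Lemma~\ref{A3}(3), which you also invoke, is what ensures $u^-_{c,\ell}(\cdot,-\tau)$ is unaffected by the perturbation; but it is the generating-function trick, not an IFT, that delivers the exact $c$-independent translation.
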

\begin{proof}
The function $S_{-\tau}(q',q)$ induces a symplectic map between the time $-\tau$ section and the time-0-section $\Phi$: $(p',q')\to(p,q)$
$$
p=\frac{\partial S_{-\tau}}{\partial q}(q',q)\qquad p'=-\frac{\partial S_{-\tau}}{\partial q'}(q',q).
$$
We introduce a smooth function $\kappa$ such that $\kappa(q',q)=1$ if $|q'-q|\le K$ and $\kappa(q',q)=0$ if $|q'-q|\ge K+1$. Let $\Phi'$ be the map determined by the generating function $S_{-\tau}+\kappa S_\delta$, the symplectic diffeomorphism $\Psi= \Phi'\circ\Phi^{-1}$ is close to identity if $S_\delta$ is $C^r$-small. We choose a smooth function $\rho(s)$ with $\rho(-\tau)=0$, $\rho(0)=1$ and let $\Phi_s'$ be the symplectic map produced by $S_{-\tau}+\rho(s)\kappa S_\delta$ and let $\Psi_s=\Phi_s'\circ\Phi^{-1}$. Clearly, $\Psi_s$ defines a symplectic isotopy between the identity map and $\Psi$. Thus, there is a unique family of symplectic vector fields $X_s$: $T^*\mathbb{T}^2\to TT^*\mathbb{T}^2$ such that
$$
\frac d{ds}\Psi_s=X_s\circ\Psi_s.
$$
By the choice of perturbation, there is a simply connected and compact domain $D$ such that $\Psi_s|_{T^*\mathbb{T}^2\backslash D}=id$. It follows that there exists a Hamiltonian $H_1(p,q,s)$ such that $X_s=J\nabla H_1(p,q,s)$. Re-parametrizing $s$ by $t$, we can make $X_s$ smoothly depend on $t$ and smoothly connected to the zero vector field at $t=-\tau,0$. To show the smallness of $dH'$ we apply a theorem of Weinstein \cite{W}. A neighborhood of the identity in the symplectic diffeomorphism group of a compact symplectic manifold can be identified with a neighborhood of the zero in the vector space of closed 1-forms on the manifold. Since Hamiltomorphism is a subgroup of symplectic diffeomorphism, there is a function $H'$, sufficiently close to $H$, such that $\Phi^{-\tau,0}_{H'}=\Phi_{H_1}^{-\tau,0}\circ\Phi_{H}^{-\tau,0}$.

For all $c\in\Gamma_c$, by the assumption, any backward (forward) $c$-minimal curve will not return back to $\mathrm{supp}S_{-\tau}$ if its initial point falls into the support. Let $u^{\pm,S_\delta}_{c,\imath}$ denotes the elementary weak KAM solution for the perturbed Hamiltonian
$$
\begin{aligned}
u^{-,S_\delta}_{c,\imath}(q,0)=&\min_{|q'-q|\le 2D\tau}(S_{-\tau}(q',q)+S_\delta(q)-\langle c,q-q'\rangle+u^-_{c,\imath}(q',-\tau))\\
=&S_\delta(q)+\min_{|q'-q|\le 2D\tau}(S_{-\tau}(q',q)-\langle c,q-q'\rangle+u^-_{c,\imath}(q',-\tau))\\
=&S_\delta(q)+u^{-}_{c,\imath}(q,0).
\end{aligned}
$$
Obviously, one has $u^{+,S_\delta}_{c,\imath}(q,0)=u^{+}_{c,\imath}(q,0)$. The lemma is proved because the barrier function is the difference of the two functions.
\end{proof}

\begin{proof}[Proof of Theorem \ref{fundamental}]

Given $q^*\in\mathbb{T}^2$, let $\mathbb{S}_{d_1}(q^*)=\{|q-q^*|\le d_1\}$ denote a square. Given a function $B\in C^0(\mathbb{S}_{d_1}(q^*),\mathbb{R})$, let
$$
\mathrm{Argmin}(\mathbb{S}_{d_1}(q^*),B)=\{q\in\mathbb{S}_{d_1}(q^*):B(q)=\min B\}.
$$
Let $\pi_i$ be the projection so that $\pi_i(q_1,q_2)=q_i$ ($i=1,2$). A connected set $V$ is said to be non-trivial for $\mathbb{S}_{d_1}(q^*)$ if $\pi_iV\cap\mathbb{S}_{d_1}(q^*)=\pi_i\mathbb{S}_{d_1}(q^*)$ holds for $i=1$ or $2$. Otherwise, it is said to be trivial for $\mathbb{S}_{d_1}(q^*)$. Let $B^\ell_{c,\delta}$ be the barrier function for the Hamiltonian $H+H_{\delta}$ and the class $c$, we have

\begin{lem}
For any small $\epsilon>0$, there is a set $\mathfrak{O}$ open-dense in $\mathfrak{B}_{\epsilon}$ such that for each $H_{\delta}\in\mathfrak{O}$, it holds simultaneously for all $c\in I_k\cap\Gamma_c^*$ that the set $\mathrm{Argmin}(\mathbb{S}_{d_1}(q^*),B^\ell_{c,\delta})$ is trivial for $\mathbb{S}_{d_1}(q^*)$ provided $\mathbb{S}_{d_1}(q^*)\subset N_k$ and $d_1<d/3$ is suitably small.
\end{lem}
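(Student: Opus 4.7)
The plan is to reduce the uncountable family $\{B^\ell_c : c\in I_k\cap\Gamma_c^*\}$ to a finite family by the modulus of continuity in Lemma \ref{moduluscontinuity}, then to carry out a standard Sard-type perturbation for each representative class so that the argmin over $\mathbb{S}_{d_1}(q^*)$ becomes a single non-degenerate interior minimum, and finally to invoke Lemma \ref{A5} to realize the chosen barrier shift as an honest Hamiltonian perturbation $H_\delta \in \mathfrak{B}_\epsilon$.

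Fix a small parameter $\mu>0$ to be adjusted at the end. Since $I_k$ is compact and the parameter $\sigma$ of invariant circles ranges over a bounded set, Lemma \ref{moduluscontinuity} lets one pick finitely many classes $c_1,\dots,c_N\in I_k\cap\Gamma_c^*$ such that every $c\in I_k\cap\Gamma_c^*$ satisfies $\|B^\ell_c-B^\ell_{c_j}\|_{C^0(N_k)}<\mu$ for some $j$. For each $c_j$ I would exhibit an open dense subset $\mathfrak{O}_j$ of the space of admissible barrier perturbations, i.e.\ $C^r$-small functions $S_\delta$ supported in a disc of diameter $<d$ contained in $\mathbb{S}_{d_1}(q^*)$, such that $B^\ell_{c_j}+S_\delta$, restricted to $\mathbb{S}_{d_1}(q^*)$, attains its minimum at a single interior point $q^{\star\star}_j$, is non-degenerate there with a uniform quadratic lower bound
\begin{equation*}
A_j\,|q-q^{\star\star}_j|^2 \le B^\ell_{c_j}(q)+S_\delta(q)-B^\ell_{c_j}(q^{\star\star}_j)-S_\delta(q^{\star\star}_j),\qquad q\in\mathbb{S}_{d_1}(q^*),
\end{equation*}
and exceeds the minimum by at least $4\mu$ on $\partial\mathbb{S}_{d_1}(q^*)$. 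A natural candidate is $S_\delta(q)=\lambda\rho(q)\bigl(|q-q^{\star\star}_j|^2+\langle v,q\rangle\bigr)$ with a bump $\rho$, small $\lambda>0$, and a generic vector $v$; Sard's theorem guarantees that for generic $v$ the minimum is unique and non-degenerate.

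Set $\mathfrak{O}_0:=\bigcap_{j=1}^N\mathfrak{O}_j$, a finite intersection of open dense sets. Lemma \ref{A5} turns each $S_\delta\in\mathfrak{O}_0$ into a Hamiltonian perturbation $H_\delta\in\mathfrak{B}_\epsilon$ that effects the translation $B^\ell_{c,\delta}=B^\ell_c+S_\delta$ \emph{uniformly} for all $c\in I_k$; the resulting open dense subset of $\mathfrak{B}_\epsilon$ is the desired $\mathfrak{O}$. For an arbitrary $c\in I_k\cap\Gamma_c^*$, pick $c_j$ with $\|B^\ell_c-B^\ell_{c_j}\|_{C^0(N_k)}<\mu$; since the same $S_\delta$ is added to both, this $C^0$-closeness is preserved, so the argmin of $B^\ell_{c,\delta}$ on $\mathbb{S}_{d_1}(q^*)$ is trapped in the disc $\{|q-q^{\star\star}_j|\le\sqrt{4\mu/A_j}\}$. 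Choosing $\mu$ so small that $\sqrt{4\mu/\min_j A_j}<d_1/3$ confines the argmin to a proper boundary-free subset of $\mathbb{S}_{d_1}(q^*)$, which is therefore trivial. Openness of $\mathfrak{O}$ itself follows from the stability of non-degenerate quadratic minima under $C^0$-small perturbations of the barrier, together with the upper semi-continuity of Ma\~n\'e/Aubry sets in the Hamiltonian.

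The main obstacle is engineering a single Hamiltonian perturbation that simultaneously handles the whole continuum $I_k\cap\Gamma_c^*$. Converting non-degeneracy for the finite net $\{c_j\}$ into a localization estimate for nearby $c$ demands a $j$-uniform Hessian bound $A_j$ and a $j$-uniform depth $4\mu$; uniformity is not automatic but is enforceable because only finitely many perturbations are amalgamated. The $\sqrt{|\sigma-\sigma'|}$ loss in Lemma \ref{moduluscontinuity} forces the net to be refined on the $\mu^2$-scale in $\sigma$, only inflating $N$. Finally, Lemma \ref{A5} insists that $\mathrm{supp}\,S_\delta$ have diameter $<d$ and that no backward or forward $c$-minimal curve re-enter it; these geometric hypotheses are exactly what items (1) and (3) of Lemma \ref{A3} supply, and this is the reason for the standing assumption $d_1<d/3$.
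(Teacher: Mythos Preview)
There is a genuine gap. Your argument hinges on producing, for each representative $c_j$, a \emph{non-degenerate} minimum of $B^\ell_{c_j}+S_\delta$ with a quadratic lower bound $A_j|q-q^{\star\star}_j|^2$, and then invoking Sard's theorem. But the barrier $B^\ell_c$ is only semi-concave (it is a sum of two semi-concave functions, cf.\ the lemma preceding Definition~A.1), not $C^2$. Adding a smooth $S_\delta$ keeps it semi-concave, nothing more. Semi-concavity gives a quadratic \emph{upper} bound near any point, not the quadratic \emph{lower} bound you need at the minimum; a function like $|q|^4$ shows the minimum can be arbitrarily flat, and perturbing by $\langle v,q\rangle$ moves the minimum but drives the effective ``Hessian'' to zero as $|v|\to 0$. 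So Sard's theorem is inapplicable, and the constants $A_j$ either fail to exist or cannot be bounded below uniformly over the open-dense set of $S_\delta$. Without the quadratic growth, the localization step---inferring from $\|B^\ell_c-B^\ell_{c_j}\|_{C^0}<\mu$ that $\mathrm{Argmin}(B^\ell_c+S_\delta)$ lies in a ball of radius $\sqrt{4\mu/A_j}$---collapses. There is also a circularity: $\mu$ fixes the net, the net fixes the finitely many conditions on $S_\delta$, and only after choosing $S_\delta$ do you learn $\min_j A_j$; you then need $\mu$ small relative to that number, which may force a finer net.

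The paper circumvents all of this by a completely different, dimension-counting argument that needs only the $C^0$ modulus of continuity in Lemma~\ref{moduluscontinuity}. It perturbs by functions $S_\delta$ that are constant in $q_2$ (respectively $q_1$) on the square, observes that for each fixed $B^\ell_c$ at most one equivalence class of such $S_\delta$ can make the $\pi_1$-projection of the argmin full, and then shows via the isometry-type inequality $d(B^\ell_c,B^\ell_{c'})\ge d(S_\delta,S'_\delta)$ that failure of density in a $k$-dimensional slice would force the box dimension of $\{B^\ell_c\}$ to be at least $k$. Since the $\tfrac12$-H\"older modulus bounds that box dimension by $3$, choosing $k\ge 4$ yields a contradiction. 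No smoothness of the barrier, no Hessian, and no net are required.
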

\begin{proof}
The openness is obvious. To show the density, we construct the perturbations $H_\delta\in\mathfrak{B}_{\epsilon}$ such that the barrier function is under a translation $B_c(q,0)\to B_c(q,0)+S_\delta(q)$ for all $c\in I_k\cap\Gamma_c^*$ and $q\in\mathrm{supp}S_\delta$. Because of Lemma \ref{A5}, it works.

Recall the number $d>0$ defined in Lemma \ref{A3}. Given a square $\mathbb{S}_{d_1}(q^*)\subset N_k$ with $3d_1<d$, we consider the space of $C^r$-functions $\mathfrak{S}_1$, a function $S\in\mathfrak{S}_1$ if it satisfies the conditions that $\mathrm{supp}S\subset B_{d/2}(q^*)$ and $S$ is constant in $q_2$ when it is restricted in $\mathbb{S}_{d_1}(q^*)$. Similarly, we can define $\mathfrak{S}_2$ such that $S\in\mathfrak{S}_2$ implies that $\mathrm{supp}S\subset B_{d/2}(q^*)$ and it is constant in $q_1$ when it is restricted in $\mathbb{S}_{d_1}(q^*)$.

In $\mathfrak{S}_i$ we define an equivalent relation $\sim$, two functions $S_1\sim S_2$ implies $S_1-S_2=\mathrm{constant}$ when they are restricted on $\mathbb{S}_{d_1}(q^*)$. Obviously, $\mathfrak{S}_i/\sim$ is a linear space with infinite dimensions. For $S_1,S_2\in\mathfrak{S}_i/\sim$,
$\|S_1-S_2\|_r$ measures the $C^r$-distance if they are regarded as the functions defined on $\mathbb{S}_{d_1}(q^*)$. We also use $\mathfrak{B}_{i,\epsilon}$ to denote a ball in $\mathfrak{S}_i/\sim$, about the origin of radius $\epsilon$ in the sense of the $C^r$-topology.

We claim that there exists a set $\mathfrak{O}_{1,\epsilon}$ open-dense in $\mathfrak{B}_{1,\epsilon}$ such that for each $S_{\delta}\in\mathfrak{O}_{1,\epsilon}$ it holds simultaneously for all $c\in I_k\cap\Gamma_c^*$ that
\begin{equation}\label{Z1}
\pi_1\mathrm{Argmin}(\mathbb{S}_{d_1}(q^*),B^\ell_{c}+S_{\delta})\subsetneqq[q_1^*-d_1,q_1^*+d_1]
\end{equation}
Let $\mathfrak{F}_c=\{B^\ell_c(q,0):c\in\Gamma^*_c\}$ be the set of barrier functions. For $i=1,2$ we set
$$
\mathfrak{Z}_i=\{B\in C^0(\mathbb{S}_{d_1}(q^*),\mathbb{R}):\pi_i\mathrm{Argmin}(\mathbb{S}_{d_1}(q^*),B)=[q_i^*-d_1,q_i^*+d_1]\},
$$
where $q^*=(q^*_1,q^*_2)$.
If the density does not exist, there would be small $\epsilon>0$, for each $S_\delta\in\mathfrak{B}_{1,\epsilon}$, some $c\in\Gamma_c^*$ exists such that $B^\ell_c+S_\delta\in\mathfrak{Z}_1$. Let $\mathfrak{B}_{1,\epsilon}^k$ be the intersection of $\mathfrak{B}_{1,\epsilon}$ with a $k$-dimensional subspace. The box-dimension of $\mathfrak{B}_{1,\epsilon}^k$ in $C^0$-topology will not be smaller than $k$.

For any $B^\ell_c\in\mathfrak{F}_c$ there is only one $S_\delta\in\mathfrak{B}_{1,\epsilon}$ such that $B^\ell_c+S_\delta\in\mathfrak{Z}_1$. Otherwise, there would be $S_\delta'\ne S_\delta$ such that $B^\ell_c+S_\delta'\in\mathfrak{Z}_1$ also. As we have $B^\ell_c+S'_\delta=B^\ell_c+S_\delta+S'_\delta-S_\delta$ where $B^\ell_c+S_\delta\in\mathfrak{Z}_1$ and $S'_\delta\sim S_\delta$, which contradicts the definition of $\mathfrak{S}_1$. For $S_\delta\in\mathfrak{B}_{1,\epsilon}$, let $\mathfrak{S}_{S_\delta}=\{B_c^\ell\in\mathfrak{F}_c:B_c^\ell+S_\delta\in\mathfrak{Z}_1\}$. If the density does not exist, $\mathfrak{S}_{S_\delta}$ is non-empty. For any $S_{\delta},S'_{\delta}\in\mathfrak{B}^k_{1,\epsilon}$, each $B_c^\ell\in\mathfrak{S}_{S_\delta}$ and each $B_{c'}^\ell\in\mathfrak{S}_{S'_\delta}$ one has
\begin{equation}\label{isometric}
\begin{aligned}
d(B_c^\ell,B_{c'}^\ell)&=\max_{q\in\mathbb{S}_{d_1}(q^*)}|B_{c}^\ell(q,0)-B_{c'}^\ell(q,0)|\\
&\ge\max_{|q_1-q_1^*|\le d_1}\Big|\min_{|q_2-q_2^*|\le d_1}B_{c}^\ell(q,0)-\min_{|q_2-q_2^*|\le d_1}B_{c'}^\ell(q,0)\Big|\\
&=\max_{|q_1-q_1^*|\le d_1}|S_{\delta}(q)-S'_{\delta}(q)|=d(S_{\delta},S'_{\delta})
\end{aligned}
\end{equation}
where $q=(q_1,q_2)$ and $d(\cdot,\cdot)$ denotes the $C^0$-metric. It implies that the box-dimension of the set $\mathfrak{F}_c$ is not smaller than the box-dimension of $\mathfrak{B}_{1,\epsilon}^k$ in $C^0$-topology. Guaranteed by the modulus continuity of Lemma \ref{moduluscontinuity}, the box dimension of the set $\mathfrak{F}_c$ is not larger than 3. Therefore, we will obtain an absurdity if we choose $k\ge 4$.

In the same way, we can show that there exists a set $\mathfrak{O}_{2,\epsilon}$ open-dense in $\mathfrak{B}_{2,\epsilon}$ such that for each $S_{\delta}\in\mathfrak{O}_{2,\epsilon}$ it holds simultaneously for all $c\in I_k\cap\Gamma_c^*$ that
\begin{equation}\label{Z2}
\pi_2\mathrm{Argmin}(\mathbb{S}_{d_1}(q^*),B^\ell_{c}+S_{\delta})\varsubsetneq[q_2^*-d_1,q_2^*+d_1].
\end{equation}
Therefore, $\exists$ arbitrarily small $S_{i,\delta}\in\mathfrak{B}_{i,\epsilon}$ such that  $\pi_i\mathrm{Argmin}(\mathbb{S}_{d_1}(q^*),B^\ell_c+S_{1,\delta}+S_{2,\delta})$ is trivial for $\mathbb{S}_{d_1}(q^*)$ and for all $c\in I_k\cap\Gamma_c^*$. Due to Lemma \ref{A5} we obtain the density.
\end{proof}

To finish the proof of Theorem \ref{fundamental}, we split the annulus $N_k$ equally into squares $\{\mathbb{S}_j=|q-q_j|\le\frac {d_1}5\}$. For each $\mathbb{S}_j$, there exists an open-dense set $\mathfrak{O}_{k,j}\subset\mathfrak{B}_{\epsilon}$, for each $H_{\delta}\in\mathfrak{O}_{k,j}$ it holds simultaneously for all $c\in I_k\cap\Gamma^*_c$ that the set $\mathrm{Argmin}(\mathbb{S}_j,B^\ell_{c,\epsilon})$ is trivial for $\mathbb{S}_j$. The intersection $\cap\mathfrak{O}_{k,j}$ is still open-dense in $\mathfrak{B}_{\epsilon}$. For each $H_{\delta}\in\cap_{k,j}\mathfrak{O}_{k,j}$, it holds simultaneously for all $c\in\Gamma^*_c$ that the diameter of each connected component of the Ma\~n\'e set is not larger than $\frac 45d_1$ if it keeps away from the Aubry set.
\end{proof}

\noindent{\bf Acknowledgement}. The author is grateful to the referees for their comments which help a lot
in the revision. He also got benefits from the comments of the referees for the papers \cite{C17a,C17b,CZ}. The work is supported by NNSF of China (No.11790272 and No.11631006) and a program PAPD of Jiangsu Province, China.

\end{document}